\def \mod {\mathop{\rm mod}\nolimits}
\def\eref#1{(\ref{#1})}
\def \N {\mathbb{N}}
\def \P {\mathbb{P}}
\def \A {\mathcal{A}}
\def \C {\mathcal{C}}
\def \D {\mathcal{D}}
\def \M {\mathcal{M}}
\def \Z {\mathbb{Z}}
\def \M {\mathcal{M}}
\def \A {\mathcal{A}}
\def \C {\mathcal{C}}
\def \Neighb {\mathcal{N}}
\def \ber {{\mathcal{B}}}
\def \N {\mathbb N}
\def \Z {\mathbb Z}
\def \P {{\mathbb P}}
\def \X {{\mathcal X}}
\def \a {\alpha}
\newtheorem{defi}{Definition}[section]
\newtheorem{prop}[defi]{Proposition}
\newtheorem{lemm}[defi]{Lemma}
\newtheorem{cor}[defi]{Corollary}
\newtheorem{theo}[defi]{Theorem}
\newtheorem{exam}[defi]{Example}
\begin{document}

\title{{\bf Probabilistic cellular automata \\ and random fields with i.i.d. directions}\thanks{This work was partially supported by the ANR project
 MAGNUM (ANR-2010-BLAN-0204).}}

\author{
Jean {\sc Mairesse}%
\thanks{CNRS, UMR 7089, LIAFA, Univ Paris Diderot, Sorbonne
  Paris Cit\'e, F-75205 Paris, France. 
E-mail: {\tt Jean.Mairesse@liafa.univ-paris-diderot.fr}.}
\and 
Ir\`ene {\sc Marcovici}%
\thanks{Univ Paris Diderot, Sorbonne Paris Cit\'e, LIAFA, UMR 7089
  CNRS, F-75205 Paris, France. 
E-mail: {\tt Irene.Marcovici@liafa.univ-paris-diderot.fr}. }}

\date{\today}

\maketitle

\begin{abstract}
Let us consider the simplest model of one-dimensional probabilistic
cellular automata (PCA). The cells are indexed by the integers, the
alphabet is $\{0,1\}$, and all the cells evolve synchronously. The new content
of a cell is randomly chosen, independently of the others, according to a
distribution depending only on the content of the cell itself and of its
right neighbor.
There are necessary and sufficient conditions on the four parameters
of such a PCA to have a Bernoulli product invariant measure. We study the
properties of the random field given by the space-time diagram obtained
when iterating the PCA starting from its Bernoulli product invariant measure. 
It is a non-trivial random 
field with very weak dependences and nice combinatorial properties. In
particular, not only the horizontal lines but also the lines in any
other direction consist in i.i.d. random
variables. We study extensions of the results to Markovian invariant measures, 
and to PCA with larger alphabets and neighborhoods.
\end{abstract}

\tableofcontents

\section{Introduction}

Consider a bi-infinite set of cells indexed by the integers $\Z$, each cell
containing a letter from a finite alphabet $\A$. The updating 
is local (each cell updates according to a finite neighborhood), time-synchronous, and space-homogeneous. When the updating is
deterministic, we obtain a Cellular Automaton (CA), and when it is
random, we obtain a Probabilistic Cellular Automaton (PCA). 
Alternatively, a PCA may be viewed as the discrete-time and synchronous counterpart of 
a (finite range) interacting particle system. We refer to
\cite{toombook} for a comprehensive survey of the theory of PCA. 

\medskip


There are two complementary viewpoints on PCA. First, it defines a
mapping from the set of probability measures on $\A^{\Z}$ into itself. 
Second, it defines a discrete-time Markov chain on the state space
$\A^{\Z}$. A realization of the Markov chain defines a random field on
$\A^{\Z \times \N}$, called a {\em space-time diagram}. An {\em
  invariant measure} for a PCA is a probability measure on $\A^{\Z}$ which is
left invariant by the dynamic. Starting from an invariant measure, we obtain
a space-time diagram which is time-stationary.  
Our goal is to study the stationary random fields associated to some particular and remarkable 
PCA. 

\medskip

First, we consider the image by a PCA of a Bernoulli product measure. The resulting measure is described via 
explicit formulas for its finite-dimensional marginals. 
Second, we use this description to revisit a result from \cite{bel}
(see also \cite{vas,toombook}) with a new and simple proof: 
explicit conditions on a PCA
ensuring that a Bernoulli product measure is invariant. Third, we focus on the equilibrium behavior of PCA having such a Bernoulli product invariant measure. 
The resulting space-time diagram turns out to have an original and subtle
correlation structure: it is non-i.i.d. but, in any direction, the
``lines'' are i.i.d. 
In the case of an alphabet of size two and a neighborhood of size two (the
updating of a cell depends only on itself and its
right-neighbor),
the stationary space-time diagram satisfies additional remarkable
properties: it can also be seen as being  obtained by iterating a transversal PCA in
another direction. 

\medskip

The paper is structured as follows. General definitions are given in Section 2. A special emphasis is put on the simplest non-trivial PCA, that is, the ones 
defined on an alphabet of size 2 and a neighborhood of size 2. They are studied in details in Section 3 and 4. In
Section 5, we consider the extension to general alphabets and
neighborhoods, and we also consider the case of Markovian invariant
measures.  
In Section 6, we revisit classical results on CA in view of the PCA
results.

\medskip

{\bf Notations.} Given a finite set $\A$, the free
semigroup generated by $\A$ is denoted by  $\A^+$. 
The length, that is, number of letters, of a word $u\in \A^+$ is
denoted by $|u|$. The number of occurences of the letter $a\in \A$ in
a word $u\in \A^+$ is denoted by $|u|_a$. 

\section{Probabilistic cellular automata (PCA)}

\subsection{Definition of PCA}

Let $\A$ be a finite set, called the \emph{alphabet}, and let ${\X}=\A^{\Z}$. The set $\Z$ will be referred to as the set of \emph{cells}, whereas $\X$ is the set of \emph{configurations}.
For some finite subset $K$ of $\Z$, consider $y=(y_k)_{k\in K}\in \A^K$. The
\emph{cylinder} defined by $y$ is the set
\begin{equation*}
[y] = \bigl\{x\in {\X} \mid
\forall k\in K, x_k=y_k \bigr\} \:.
\end{equation*}
For a given finite subset $K$, we denote by $\C(K)$
the set of all cylinders of base $K$.  Given $K,L \subset \Z$, we
define $K+L = \{ u+ v \mid u\in K, v\in L\}$. 

We denote by
$\M(\A)$ the set of probability measures on $\A$. 
Let us equip ${\X}$ with the product topology, which can be
described as the topology generated by cylinders. We denote by
 $\M({\X})$ the set of probability measures on ${\X}$ for the
 Borelian  $\sigma$-algebra.


\begin{defi}\label{de-pca}
Given a finite set $\Neighb \subset \Z$, a \emph{transition function} of \emph{neighborhood} $\Neighb$
is a function $f: \A^{\Neighb} \rightarrow  \M(\A)$. 
The \emph{probabilistic cellular automaton} (PCA) of transition
function $f$ is the application $F: \M(\X)  \longrightarrow \M(\X), \ \mu
\longmapsto \mu F$, defined on cylinders by: $\forall K, \forall y=(y_k)_{k\in
  K}$, 
\[
\mu F[y]=\sum_{[x]\in \C(K+\Neighb)}\mu[x]\prod_{k\in K}f((x_{k+v})_{v\in \Neighb})(y_k) \:.
\]
\end{defi}

Assume that the initial measure is concentrated on some configuration $x\in \X$. Then by application of
$F$, the content of the $k$-th cell is updated to $a\in\A$ with probability 
$f((x_{k+v})_{v\in \Neighb})(a)$. 

\medskip

We keep the notation $f$ for the extended mapping
$\M(\A^{\Neighb}) \longrightarrow \M(\A), \ \nu \longmapsto \nu f$
with 
\[
\forall a \in \A, \qquad \nu f(a)= \sum_{u\in \A^{\Neighb}}\nu(u)
f(u)(a) \:.
\]

\subsection{Space-time diagrams}

A PCA is a Markov chain on the state space ${\X}$. 
Consider a realization $(X^n)_{n\in \N}$ of that Markov chain. If
$X^0$ is distributed according to $\mu$ on $\X$, then $X^n$ is
distributed according to $\mu F^n$. The random field $(X^n)_{n\in \N}
= (X^n_k)_{k\in \Z,n\in \N}$ is called a \emph{space-time diagram}
(the space-coordinate is $k$, and the time-coordinate is $n$). 

If the neighborhood is $\Neighb=\{0,1\}$, for symmetry reasons, a natural
choice is to represent the space-time diagram on a regular triangular
lattice, as in Figure \ref{fi-spacetime}. 




\begin{figure}
\begin{center}
\includegraphics[scale=0.60]{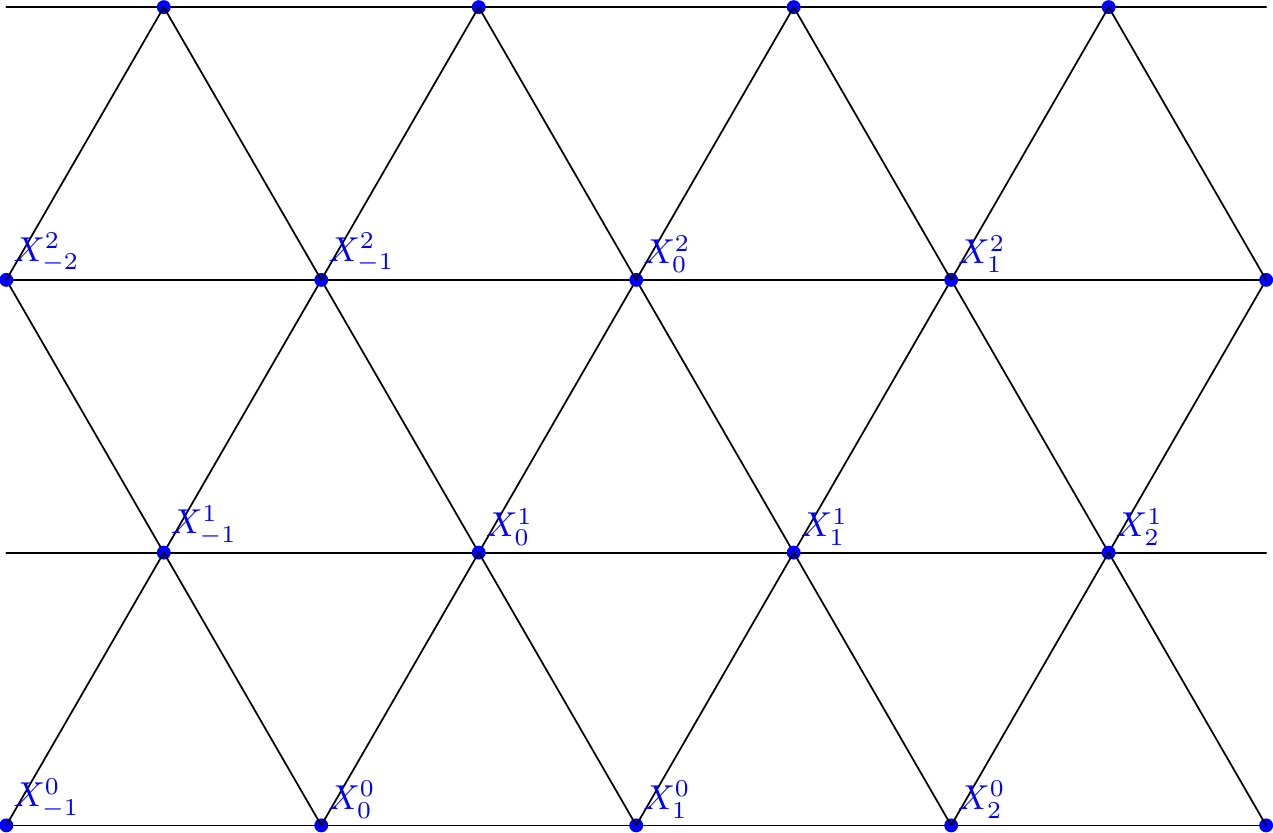}
\caption{Space-time diagram.}\label{fi-spacetime}
\end{center}
\end{figure}

The \emph{dependence cone} $\D(i,n)$ of the variable $X^n_i$ is defined as
the set
of variables which are influenced by the value of $X^n_i$. If the
neighborhood is $\Neighb=\{0,\dots \ell\}$, then
$\D(i,n)=\{X^{n+k}_{i+j}, k\in\N, -k\ell \leq j\leq 0 \}.$ 

Next lemma follows directly from the definition of a PCA. 

\begin{lemm}\label{lem:cone} Let $(i,n)$ belong to $\Z\times (\N\setminus\{0\})$
  and let $S$ be a subset of $\Z\times\N$ such that $\D(i,n)\cap
  S=\emptyset$. Then, $X^n_i$ is independent of $(X^m_j)_{(j,m)\in S}$
  conditionally to $(X^{n-1}_{i+v})_{v\in\Neighb}$. 
\end{lemm}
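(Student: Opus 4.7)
The plan is to realize the space-time diagram via an explicit coupling with i.i.d. uniform random variables, so that the abstract notion of dependence cone translates into a concrete statement about which random inputs a given cell reads.

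First I would introduce an auxiliary i.i.d. family $(U^m_j)_{j\in\Z,\, m\geq 1}$ of $[0,1]$-uniform variables, independent of $X^0$, together with a measurable map $\phi : \A^{\Neighb} \times [0,1] \to \A$ such that $\phi(u, U)$ has distribution $f(u)$ whenever $U$ is uniform. The space-time diagram can then be rebuilt recursively by
\[
X^m_j \;=\; \phi\bigl((X^{m-1}_{j+v})_{v\in\Neighb},\, U^m_j\bigr),
\]
and the joint law of $(X^m_j)$ so obtained is the one prescribed by Definition~\ref{de-pca}.

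The key combinatorial step is: for every $(j,m)\notin\D(i,n)$, the variable $X^m_j$ is a measurable function of $X^0$ and of the restricted family $(U^{m'}_{j'})_{(j',m')\neq (i,n)}$. This follows by induction on $m$. Varying only $U^n_i$ (while keeping $X^0$ and every other $U^{m'}_{j'}$ fixed) alters $X^n_i$ but no other cell at time $n$ (updates at time $n$ are performed in parallel), and then, because each subsequent update only reads parents inside $\Neighb$, the modification can propagate only to descendants of $(i,n)$, which is by definition the set $\D(i,n)$. In particular, both the conditioning vector $W := (X^{n-1}_{i+v})_{v\in\Neighb}$ (which lies strictly below time $n$) and the whole vector $(X^m_j)_{(j,m)\in S}$ are functions of $V := \bigl(X^0,\, (U^{m'}_{j'})_{(j',m')\neq (i,n)}\bigr)$.

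To conclude, I would use that $X^n_i = \phi(W, U^n_i)$ while $V$ and $U^n_i$ are independent by construction. Conditionally on $W$, the variable $X^n_i$ is distributed as $\phi(W, U^n_i)$, driven by the single input $U^n_i$ which is independent of $V$, whereas $(X^m_j)_{(j,m)\in S}$ is a deterministic function of $V$; a standard factorization then yields the announced conditional independence. The only genuinely delicate point is the inductive verification that $U^n_i$ cannot leak into any cell outside $\D(i,n)$; once that is spelled out, the rest of the argument is automatic.
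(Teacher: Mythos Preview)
Your argument is correct. The coupling construction is the standard way to make the definition of a PCA concrete, and the induction showing that $U^n_i$ can only propagate into $\D(i,n)$ is exactly the content of the dependence-cone definition: if $(j,m)\notin\D(i,n)$ and $m\geq 1$, then $(j,m)\neq(i,n)$ and every parent $(j+v,m-1)$, $v\in\Neighb$, is again outside $\D(i,n)$, which is what drives the induction. The final factorization (since $U^n_i$ is independent of $V$, and both $W$ and $(X^m_j)_{(j,m)\in S}$ are measurable in $V$, the conditional law of $\phi(W,U^n_i)$ given $(W,Z)$ depends only on $W$) is routine.

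As for comparison with the paper: the paper does not give a proof of this lemma at all; it simply states that the result ``follows directly from the definition of a PCA''. Your write-up makes that sentence precise. So there is no alternative approach to contrast with---you have supplied the argument that the paper leaves implicit.
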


We point out that if a PCA has \emph{positive rates},
i.e., $\forall u\in \A^{\Neighb}, \forall a \in \A, \ f(u)(a)>0$,
then any of its stationary space-time diagram is a Markovian random field. We refer
to \cite{leb} for an in-depth study of the connections between Gibbs
states and stationary space-time diagrams of PCA. 

\subsection{Product form invariant measures}

\begin{defi} For $p\in[0,1]$, we denote by $\mu_p$ the Bernoulli
  product measure of parameter $p$ on $\{0,1\}^{\Z}$, that is,
  $\mu_p=\ber_{p}^{\otimes \Z}$, where $\ber_{p}$ denotes the Bernoulli
  measure of parameter $p$ on $\{0,1\}$. Thus, for any cylinder $[x]$, we have 
$$\mu_p[x]=(1-p)^{|x|_0}p^{|x|_1}\:.$$ 
\end{defi}


We give a first property of the space-time diagram that is
shared by every PCA having a Bernoulli product invariant measure. 

\begin{lemm}\label{outside} Let $F$ be a PCA of neighborhood
  $\{0,\ldots,\ell\}$. Assume that $\mu_pF=\mu_p$ and consider
  the stationary space-time diagram obtained for that invariant
  measure. 
 Then for any $\alpha>-1/\ell$, the line $L_{\alpha}=\{(k,n)\in\Z\times\N \mid n=\alpha k\}$ is 
 such that the random variables $(X_k^n)_{(k,n)\in L_{\alpha}}$ are i.i.d.
\end{lemm}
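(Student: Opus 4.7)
The strategy is to proceed by induction on the number $N$ of selected points on $L_\alpha$, establishing that every finite subfamily of $(X^n_k)_{(k,n)\in L_\alpha}$ consists of i.i.d.\ $\ber_p$ random variables; this suffices since the law of an i.i.d.\ family is determined by its finite-dimensional marginals. The base case $N=1$ is immediate from the stationarity $\mu_p F^n = \mu_p$, and the case $\alpha = 0$ is also immediate since all chosen points then lie in a single row with law $\mu_p$. I may therefore assume $\alpha \neq 0$.

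For the inductive step, pick $N$ distinct points on $L_\alpha$ and let $p_1 = (k_1, n_1)$ be the unique one with smallest time coordinate. The central geometric observation is that, under the hypothesis $\alpha > -1/\ell$, the index $k_1$ lies outside the spatial past cone at time $n_1$ of every other chosen point $p_j = (k_j, n_j)$, namely outside $\{k_j, k_j+1, \ldots, k_j + (n_j-n_1)\ell\}$. This is transparent when $\alpha > 0$, since then $k_1 < k_j$; when $-1/\ell < \alpha < 0$ it follows from the identity $n_j - n_1 = -\alpha(k_1 - k_j)$ combined with $-\alpha\ell < 1$, so that $k_1 - k_j > (n_j-n_1)\ell$.

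I would then invoke the standard random-update representation of $F$: introduce i.i.d.\ uniform variables $(U^n_k)_{k\in\Z,\,n\geq 1}$ independent of $X^0 \sim \mu_p$, and realize the dynamics as $X^{n+1}_k = g((X^n_{k+v})_{v\in\Neighb}, U^{n+1}_k)$ for a deterministic $g$ implementing $f$. Unfolding back to time $n_1$, each $X^{n_j}_{k_j}$ with $j\geq 2$ becomes a deterministic function of the update variables $U^n_k$ in its space-time past cone at times strictly above $n_1$ together with $(X^{n_1}_m)$ for $m \in \{k_j,\ldots,k_j+(n_j-n_1)\ell\}$. By the geometric claim, none of these inputs involves $X^{n_1}_{k_1}$. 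Since the row at time $n_1$ has law $\ber_p^{\otimes \Z}$, the variable $X^{n_1}_{k_1}$ is Bernoulli$(p)$ and independent of $(X^{n_1}_m)_{m \neq k_1}$; it is also independent of every $U^n_k$ with $n > n_1$. Hence $X^{n_1}_{k_1}$ is independent of $(X^{n_j}_{k_j})_{j \geq 2}$, and the induction hypothesis applied to the $N-1$ points $p_2, \ldots, p_N$ (which again lie on $L_\alpha$) closes the step.

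The only non-routine ingredient is the geometric claim, and this is precisely where the critical slope $-1/\ell$ enters; the rest is a direct consequence of stationarity, of the product form of $\mu_p$ along the row at time $n_1$, and of the independence of the update variables from the initial configuration.
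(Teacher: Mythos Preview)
Your proof is correct and follows essentially the same approach as the paper's: induct on the number of points, peel off the one with smallest time coordinate, use the slope hypothesis $\alpha>-1/\ell$ to see that this point does not lie in the past of any of the others, conclude independence from the product form of the row and the independence of the updates, and close the induction. The only cosmetic differences are that the paper shifts so the bottom point sits at $(0,0)$ and phrases the geometry via the forward ``dependence cone'' of $X_0^0$ (Lemma~\ref{lem:cone}), whereas you keep the bottom point at $(k_1,n_1)$ and phrase it via the backward spatial past cones of the other points; and you make the random-update representation explicit while the paper leaves it implicit. Neither difference is substantive.
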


\begin{proof} Let us show that any finite sequence of 
consecutive random variables on such a line is i.i.d. We can assume without loss
of generality that the first of these points is $X_0^0$. Then,
using the hypothesis on the slope, we obtain that the other random 
variables on that line are all outside the dependence cone of $X_0^0$. Thus, the
$(n-1)$-tuple they constitute is independent of $X_0^0$. By induction,
we get the result. \end{proof}

\section{PCA of alphabet and neighborhood of size 2}\label{cond}

For the time being, we assume that the  neighborhood is
$\Neighb=\{0,1\}$ and that the alphabet is $\A=\{0,1\}$. 
For convenience, we introduce the
notations: for $x,y \in \A$, 
\begin{equation*}
\theta_{xy}=\theta^1_{xy}=f(x,y)(1),
\qquad 
\theta^0_{xy}=f(x,y)(0)=1-\theta_{xy}\:.
\end{equation*}
Observe that a PCA is completely characterized by the four parameters: $\theta_{00}, \theta_{01},
\theta_{10}$, and $\theta_{11}$. 



\subsection{Computation of the image of a product measure by a
  PCA}\label{computation}

The goal of this section is to give an explicit description of the
measure $\mu_pF$, where $\mu_p$ is the Bernoulli product measure of
parameter $p$,  as a function of the parameters $\theta_{00}, \theta_{01},
\theta_{10},\theta_{11}$. 

\medskip

Let us start with an observation. Consider $(Y_n)_{n\in \Z}\sim \mu_p F$. Clearly, we have: $(Y_{2n})_{n\in \Z}\sim \mu_p$ and 
$(Y_{2n+1})_{n\in \Z}\sim \mu_p$. But the two i.i.d. sequences have a complex joint correlation structure. It makes it non-elementary
to describe the finite-dimensional marginals of $\mu_p F$. 

\medskip


\medskip


Assume that the parameters satisfy:
\begin{equation}\label{eq-condition}
(\theta_{00},\theta_{01}), (\theta_{10},\theta_{11}) \not\in
  \{(0,0), (1,1)\}\:.
\end{equation}

For $p\in (0,1)$, $\alpha\in\{0,1\}$, define the function
\begin{equation}\label{eq-g}
\begin{array}{cccl}
g_{\alpha}: & [0,1] & \longrightarrow &(0,1) \\
&  q & \longmapsto & (1-q)\;(1-p)\;\theta_{00}^{\alpha}+(1-q)\;
p\;\theta_{01}^{\alpha}+q\;
(1-p)\;\theta_{10}^{\alpha}+q\; p\;\theta_{11}^{\alpha}\:.
\end{array}
\end{equation}

Consider three random variables $X_0,X_1,Y_0$ with $(X_0,X_1) \sim
\ber_q\otimes \ber_p$ and $Y_0 \sim (\ber_q\otimes \ber_p) f$.
In words, $g_{\alpha}(q)$ is the probability to have $Y_0 =\alpha$.
With the condition \eref{eq-condition}, we have $g_{\alpha}(q)\in
(0,1)$ for all $q$. Observe also that: $g_{0}(q)+g_1(q)=1$ for all
$q$. 

For $p\in (0,1)$,
$\alpha\in\{0,1\}$,  we also define the function
\begin{equation}\label{eq-h}
\begin{array}{cccl}
h_{\alpha}: & [0,1] & \longrightarrow & [0,1]\\
 & q & \longmapsto & \bigl[ (1-q)\; p \; \theta_{01}^{\alpha}+ q\; p\;
  \theta_{11}^{\alpha}\bigr] g_{\alpha}(q)^{-1}\:.
\end{array}
\end{equation}
Consider $X_0,X_1,Y_0$ with $(X_0,X_1) \sim
\ber_q\otimes \ber_p$ and $Y_0 \sim (\ber_q\otimes \ber_p) f$. In words, $h_{\alpha}(q)$ is the probability to have $X_1 =1$
conditionally to $Y_0=\alpha$.



\begin{prop}\label{formule_vois2} 
Consider a PCA satisfying \eref{eq-condition}. Consider $p\in (0,1)$.  For $\alpha_0\cdots \alpha_{n-1} \in \A^n$, the probability of the
cylinder $[\alpha_0\cdots \alpha_{n-1}]$ under $\mu_p F$ is given by:
$$\mu_pF[\alpha_0\cdots\alpha_{n-1}]= g_{\alpha_0}(p) \ \prod_{i=1}^{n-1}
g_{\alpha_i}(h_{\alpha_{i-1}}(h_{\alpha_{i-2}}(\ldots
h_{\alpha_0}(p)\ldots )))\:.$$ 
\end{prop}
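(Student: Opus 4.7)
My plan is to introduce a sequential (left-to-right) construction of one row of the space-time diagram and exploit a Markov-like structure hidden in the PCA update. First, I realize $\mu_p F$ restricted to cells $0,\ldots,n-1$ as follows: draw $X_0,X_1,\ldots,X_n$ i.i.d.\ Bernoulli of parameter $p$, and then, independently, draw $Y_i \sim f(X_i,X_{i+1})$ for $i=0,\ldots,n-1$. The target probability is $\P(Y_0=\alpha_0,\ldots,Y_{n-1}=\alpha_{n-1})$, which is immediate from Definition~\ref{de-pca}.

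Second, I prove a strengthening by induction on $n$: if the law of $X_0$ is replaced by $\ber_q$ for an arbitrary $q\in[0,1]$ (keeping $X_1,\ldots,X_n\sim\ber_p$, all independent), and $\P_q$ denotes this law, then
\[
\P_q(Y_0=\alpha_0,\ldots,Y_{n-1}=\alpha_{n-1}) \;=\; g_{\alpha_0}(q)\prod_{i=1}^{n-1} g_{\alpha_i}\bigl(h_{\alpha_{i-1}}\circ\cdots\circ h_{\alpha_0}(q)\bigr),
\]
and the desired identity is the case $q=p$. The base case $n=1$ is exactly the definition of $g_{\alpha_0}(q)$.

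The engine of the induction is a \emph{sliding lemma}: under $\P_q$, conditionally on $Y_0=\alpha_0$, the sequence $(X_1,\ldots,X_n)$ is distributed as $\ber_{h_{\alpha_0}(q)}\otimes\ber_p^{\otimes(n-1)}$, and the conditional law of $(Y_1,\ldots,Y_{n-1})$ given $(X_1,\ldots,X_n)$ is still the product of $f(X_i,X_{i+1})$. Indeed, $X_2,\ldots,X_n$ are a priori independent of $(X_0,X_1,Y_0)$, so their conditional law is untouched; $\P_q(X_1=1\mid Y_0=\alpha_0)=h_{\alpha_0}(q)$ by the very definition of $h_{\alpha_0}$; and each $Y_i$ for $i\geq 1$ is a function of $X_i,X_{i+1}$ and independent randomness, hence independent of $(X_0,Y_0)$ given $(X_i,X_{i+1})$. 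After conditioning on $Y_0=\alpha_0$, the joint law of $(X_1,\ldots,X_n,Y_1,\ldots,Y_{n-1})$ under $\P_q$ therefore coincides, via the shift $i\mapsto i-1$, with the unconditional law of $(X_0,\ldots,X_{n-1},Y_0,\ldots,Y_{n-2})$ under $\P_{h_{\alpha_0}(q)}$. Combining this with $\P_q(Y_0=\alpha_0)=g_{\alpha_0}(q)$ and the inductive hypothesis at parameter $h_{\alpha_0}(q)$ closes the induction after reindexing the composition.

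The main obstacle is the sliding lemma: one must verify cleanly that conditioning on the single variable $Y_0$ affects only the law of $X_1$ (updating $q$ to $h_{\alpha_0}(q)$) and leaves $X_2,\ldots,X_n$ i.i.d.\ $\ber_p$, so that the problem at step $i+1$ is an exact copy of the problem at step $i$ with the updated parameter. Once that point is pinned down, the rest is pure bookkeeping.
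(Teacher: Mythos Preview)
Your proof is correct and follows essentially the same approach as the paper. Both arguments rest on the same key observation---your ``sliding lemma''---that conditioning on $Y_0=\alpha_0$ changes the law of $X_1$ from $\ber_p$ to $\ber_{h_{\alpha_0}(q)}$ while leaving $X_2,\ldots,X_n$ i.i.d.\ $\ber_p$ and the update mechanism for $Y_1,\ldots,Y_{n-1}$ intact. The only cosmetic difference is packaging: the paper inducts by adding one $Y_k$ on the right and tracks the conditional law of $X_k$ given $(Y_0,\ldots,Y_{k-1})$ as $\ber_{h_{\alpha_{k-1}}\circ\cdots\circ h_{\alpha_0}(p)}$, whereas you introduce the auxiliary parameter $q$ and strip $Y_0$ from the left, reducing to the same statement with $q$ replaced by $h_{\alpha_0}(q)$.
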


By reversing the space-direction, we get an analog proposition for a PCA satisfying the symmetrized condition: $(\theta_{00},\theta_{10}), (\theta_{01},\theta_{11}) \not\in
  \{(0,0), (1,1)\}$. 

\begin{proof} Let us compute recursively the value
  $\mu_pF[\alpha_0\cdots\alpha_{n-1}]$. We set $X=X^0$ and $Y=X^1$. Assuming that $X\sim \mu_p$, by definition,
$$\mu_pF[\alpha_0]=\P(Y_0=\alpha_0)=g_{\alpha_0}(p).$$ 
We can decompose the probability $\mu_pF[\alpha_0\alpha_1]$ into
$$\mu_pF[\alpha_0\alpha_1]=\P(Y_0=\alpha_0, Y_1=\alpha_1)=\P(Y_1=\alpha_1\mid Y_0=\alpha_0)\; \P(Y_0=\alpha_0).$$
By definition, the conditional law of $X_1$ assuming that $Y_0=\alpha_0$ is
given by $\ber_{h_{\alpha_0}(p)}$. So the law of $(X_1,X_2)$ is
$\ber_{h_{\alpha_0}(p)}\otimes\ber_{p}$ and we obtain 
$$\mu_pF[\alpha_0\alpha_1]=g_{\alpha_1}(h_{\alpha_0}(p))\; g_{\alpha_{0}}(p).$$

More generally, we have: 
$$\P(Y_0=\a_0\ldots Y_k=\a_k)=\P(Y_k=\a_k\mid Y_0=\a_0\ldots Y_{k-1}=\a_{k-1})\; \P(Y_0=\a_0\ldots Y_{k-1}=\a_{k-1}).$$
By induction, the law of $X_k$ knowing that $Y_0=\a_0\ldots Y_{k-1}=\a_{k-1}$ is $\ber_{h_{\alpha_{k-1}}(h_{\alpha_{k-2}}(\ldots h_{\alpha_0}(p)\ldots
))}$. The result follows. \end{proof}

\subsection{Conditions for a product measure to be invariant}

For $x\in \X$, denote by $\delta_x$ the Dirac probability measure
concentrated on the configuration $x$.  
The probability measure
$\mu_1= \delta_{1^{\Z}}$ is invariant for the PCA $F$ if and only if $\theta_{11}=1$.
Similarly, $\mu_0= \delta_{0^{\Z}}$ is invariant for $F$ if and only if
$\theta_{00}=0$. 

Using Proposition \ref{formule_vois2}, we get a necessary and sufficient 
condition for $\mu_p$, $p\in (0,1)$, to be an invariant measure of
$F$. The result is stated in Theorem \ref{iff}. It already appeared in 
\cite{bel} and \cite{toombook}, but our proof is new and
simpler. 


\begin{theo}\label{iff} The measure $\mu_p$, $p\in (0,1)$, is an invariant measure of
  the PCA $F$ of parameters $\theta_{00}, \theta_{01}, \theta_{10}, \theta_{11}$ if and only if one of
  the two following conditions is satisfied: 
\begin{eqnarray*}
(i) & (1-p)\;\theta_{00}+p\;
\theta_{01}=(1-p)\;\theta_{10}+p\; \theta_{11}=p \\
(ii) & (1-p)\;\theta_{00}+p\;
\theta_{10}=(1-p)\;\theta_{01}+p\; \theta_{11}=p\:.
\end{eqnarray*}
In particular, a PCA has a (non-trivial) Bernoulli product invariant measure if and only if its parameters satisfy:
\begin{equation}\label{eq-bernou}
\theta_{00}\;(1-\theta_{11})=\theta_{10}\;(1-\theta_{01})
\; \quad \mbox{  or  } \quad \;
\theta_{00}\;(1-\theta_{11})=\theta_{01}\;(1-\theta_{10})\:.
\end{equation}
\end{theo}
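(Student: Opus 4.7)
The plan is to compare the explicit formula of Proposition \ref{formule_vois2} for the cylinder probabilities of $\mu_pF$ with $\mu_p[\alpha_0\cdots\alpha_{n-1}]=p^{|\alpha|_1}(1-p)^{|\alpha|_0}$. I assume throughout that condition \eqref{eq-condition} holds; the finitely many degenerate cases where some pair $(\theta_{x0},\theta_{x1})$ equals $(0,0)$ or $(1,1)$ pin the other parameters rigidly (for instance $\theta_{00}=\theta_{01}=0$ forces the new value of a cell to be $0$ whenever $X_0=0$, so $\mu_pF=\mu_p$ forces $\theta_{10}=\theta_{11}=1$) and are seen to match (i) or (ii) by direct inspection.

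The key observation is that, as a function of $q$, $g_1(q)=A+q(B-A)$ is affine, where $A=(1-p)\theta_{00}+p\theta_{01}$ and $B=(1-p)\theta_{10}+p\theta_{11}$. For the ``if'' direction, condition (i) is exactly $A=B=p$, so $g_1\equiv p$ and $g_0\equiv 1-p$; Proposition \ref{formule_vois2} then collapses to $\mu_pF[\alpha_0\cdots\alpha_{n-1}]=\prod_i g_{\alpha_i}(\cdot)=p^{|\alpha|_1}(1-p)^{|\alpha|_0}$. Condition (ii) is handled identically by invoking the space-reversed version of Proposition \ref{formule_vois2} mentioned right after its statement.

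For the ``only if'' direction, $\mu_pF=\mu_p$ applied to the $n=1$ and $n=2$ marginals gives $g_1(p)=p$ and $g_1(h_0(p))=g_1(h_1(p))=p$. If $h_0(p)\neq h_1(p)$, the affine function $g_1$ takes the value $p$ at two distinct points and is therefore constant, yielding $A=B=p$, i.e.\ (i). The remaining case $h_0(p)=h_1(p)$ is where the main obstacle lies: evaluating $h_0$ and $h_1$ at $p$ via their definitions \eqref{eq-h} and using $g_1(p)=p$, a short algebraic manipulation shows that the equality forces $(1-p)\theta_{01}+p\theta_{11}=p$ and $h_0(p)=h_1(p)=p$; substituting this back into $g_1(p)=p$ then yields $(1-p)\theta_{00}+p\theta_{10}=p$, so (ii) holds.

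For the ``in particular'' statement, I eliminate $p$ from the two equations of (i) by rewriting each as $\tfrac{p}{1-p}=\tfrac{\theta_{00}}{1-\theta_{01}}=\tfrac{\theta_{10}}{1-\theta_{11}}$; cross-multiplying the equality of ratios gives $\theta_{00}(1-\theta_{11})=\theta_{10}(1-\theta_{01})$, and the same manipulation applied to (ii) yields $\theta_{00}(1-\theta_{11})=\theta_{01}(1-\theta_{10})$. Conversely, either equality in \eqref{eq-bernou} determines $\tfrac{p}{1-p}$ as the common value of the two ratios, producing a $p\in(0,1)$ for which (i) or (ii) respectively holds, modulo the boundary cases already handled.
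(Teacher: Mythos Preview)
Your proof is correct and follows essentially the same route as the paper: both use Proposition~\ref{formule_vois2}, the affineness of $g_1$, and space reversal for condition~(ii), with the necessity argument hinging on the same dichotomy (either $g_1$ is forced to be constant, or $h_0(p)=h_1(p)=p$). Your case split on whether $h_0(p)\neq h_1(p)$ is a cosmetic reorganization of the paper's split on whether $g_1$ is constant, and your derivation that $h_0(p)=h_1(p)$ together with $g_1(p)=p$ forces the common value to be $p$ is exactly the computation the paper performs; you are also slightly more careful than the paper in flagging the degenerate cases excluded by~\eqref{eq-condition}.
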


\begin{proof} Let us assume that $F$ satisfies condition {\it
    (i)} for some $p\in (0,1)$. Then, the function $g_1$ is given by $g_1(q)=(1-q)\; p +
  q\; p= p$, and $g_0(q)=1-g_1(q)=1-p$. By Proposition
  \ref{formule_vois2}, we have,
\[
\forall \alpha=\alpha_0\cdots \alpha_{n-1}\in\A^n, \quad \mu_p F[\alpha ]
=(1-p)^{|\alpha |_0}p^{|\alpha |_1} = \mu_p[\alpha ] \:.
\]
So $\mu_p$ is an invariant measure. 

Now, assume that the PCA $F$ satisfies condition {\it
    (ii)}. Let us reverse the space direction, that is, let us read the
  configurations from right to left. The same dynamic is now described
  by a new PCA $\widetilde{F}$ defined by the parameters
  $\tilde{\theta}_{00}=\theta_{00}, \tilde{\theta}_{01}=\theta_{10},
  \tilde{\theta}_{10}=\theta_{01},
  \tilde{\theta}_{11}=\theta_{11}$. So, the new PCA satisfies condition
  $(i)$. According to the above, we have $\mu_p
  \widetilde{F}=\mu_p$. Let us reverse the space direction, once
  again. Since the Bernoulli product measure is unchanged, we obtain  $\mu_p
  F=\mu_p$.


Conversely, assume that $\mu_pF=\mu_p$.  It follows from Proposition
\ref{formule_vois2} that for any value of the $\alpha_i$, we must have
$g_{1}(h_{\alpha_{n-1}}(h_{\alpha_{n-2}}(\ldots h_{\alpha_0}(p)\ldots
)))=p$. 
Since $g_1$ is an affine function, there are only two
possibilities: either $g_1$ is the constant function equal to $p$; or
$h_{\alpha_{n-1}}(h_{\alpha_{n-2}}(\ldots h_{\alpha_0}(p)\ldots )))=p$
for all values of $\alpha_0,\ldots , \alpha_{n-1}\in\A$. 

In the first case, observe that
$$g_{1}(q)=q\;
[-(1-p)\;\theta_{00}-p\;\theta_{01}+(1-p)\;\theta_{10}+p\;\theta_{11}]+(1-p)\;
\theta_{00}+p\;\theta_{01}\:.$$  
To get: $\forall q\in [0,1], \  g_1(q)=p$, we must have condition {\it (i)}.

In the second case, we must have $h_0(p)=h_1(p)=p$ and
$g_1(p)=p$. Using $g_0(p)=1-p$ and $g_1(p)=p$, we get:
\begin{eqnarray*}
h_{0}(p)& = & \bigl[(1-p)\; p \; (1- \theta_{01}) + p\;
p\; (1- \theta_{11})\bigr] (1-p)^{-1} \\
h_{1}(p)& = & \bigl[(1-p)\; p \; \theta_{01}+ p\;
p\; \theta_{11} \bigr] p^{-1} \ = \ (1-p)\;
\theta_{01} +p\;\theta_{11}\:.  
\end{eqnarray*}
The equality $h_1(p)=p$ provides the condition $(1-p)\;
\theta_{01} +p\;\theta_{11} =p$. Let us switch to the
equality $h_0(p)=p$. We have:
\begin{eqnarray*}
h_0(p)=p & \iff & (1-p) \; (1- \theta_{01}) + 
p\; (1- \theta_{11}) = 1-p \\ 
& \iff & (1-p) \; \theta_{01} + p \; \theta_{11} = p \:.
\end{eqnarray*}
So, we obtain condition {\it (ii)}.
\end{proof}

To complete Theorem \ref{iff}, let us quote a result from
\cite{vas}. We recall that a PCA has positive rates if: $\forall u
\in \Neighb, \forall a \in \A, \ f(u)(a)>0$. 

\begin{prop}\label{pr-ergodic}
Consider a positive-rates PCA $F$ satisfying condition $(i)$ or $(ii)$, for some $p\in
(0,1)$. Then $F$ is ergodic, that is, $\mu_p$ is the unique invariant
measure of $F$ and for all initial measure $\mu$, the sequence
$(\mu F^n)_{n\geq 0}$ converges weakly to $\mu_p$. 
\end{prop}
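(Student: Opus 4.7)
The plan is a coupling argument, using the fact that $\mu_p$ is invariant (Theorem~\ref{iff}) and that positive rates make all local transitions share a common ``refresh'' component. By the Feller property it suffices, for each finite window $W \subset \Z$ and each cylinder $[y] \in \C(W)$, to prove $\mu F^n[y] \to \mu_p[y]$. Coupling $(X^n)$ with $X^0 \sim \mu$ to the stationary chain $(Y^n)$ with $Y^0 \sim \mu_p$ using shared update noise, one has $|\mu F^n[y] - \mu_p[y]| \le \P(\exists k \in W : X^n_k \neq Y^n_k)$, so everything reduces to controlling the disagreement indicators $D^n_k = \mathbf{1}_{X^n_k \neq Y^n_k}$. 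Uniqueness of the invariant measure then follows from the same convergence statement by choosing $\mu$ to be any other candidate invariant.

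I would build the coupling as follows. Set $\epsilon = \min_{u, a} f(u)(a) > 0$ and write each transition as the mixture $f(u) = 2\epsilon \cdot \nu_0 + (1 - 2\epsilon) \cdot \tilde f(u)$, where $\nu_0 = \tfrac{1}{2}\delta_0 + \tfrac{1}{2}\delta_1$ and $\tilde f(u)$ is a probability measure on $\A$ (non-negativity uses $f(u)(a) \geq \epsilon$). At each space-time cell $(k, n+1)$, draw independent $\xi^{n+1}_k \sim \ber_{2\epsilon}$ and $U^{n+1}_k$ uniform on $[0,1]$, and use them in both chains: when $\xi = 1$ both chains take the common value $\mathbf{1}_{U^{n+1}_k < 1/2}$, independently of their pasts; when $\xi = 0$ both chains use the quantile coupling of $\tilde f(X^n_k, X^n_{k+1})$ and $\tilde f(Y^n_k, Y^n_{k+1})$ driven by $U$. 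Thus $D^n_k = D^n_{k+1} = 0$ forces $D^{n+1}_k = 0$, and regardless of inputs, $D^{n+1}_k = 0$ with probability at least $2\epsilon$ thanks to the refresh.

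The process $D^n_k$ is then dominated by a directed site-percolation $\eta^{n+1}_k = (1 - \xi^{n+1}_k)(\eta^n_k \vee \eta^n_{k+1})$ on the triangular space-time lattice, with refresh density $2\epsilon$. Convergence of $\mu F^n$ to $\mu_p$ translates into $\P(\eta^n_k = 1) \to 0$ for each fixed $k$, i.e., into the statement that every $\xi$-open path of length $n$ from the initial layer to $(k, n)$ is eventually cut.

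The main obstacle is that a naive first-moment estimate gives this decay only when $\epsilon > 1/4$, and hence does not cover arbitrary positive rates. To sharpen it one must exploit the structure coming from condition~$(i)$ or $(ii)$. Under condition~$(i)$, the function $g_1$ from Section~\ref{computation} is identically equal to $p$, which expresses a strong one-step contraction at the level of marginals: it forces the output at a site to be $\ber_p$ whenever one of its inputs is independent and $\ber_p$-distributed. Together with the stationarity of $(Y^n)$ and the in-line independence supplied by Lemma~\ref{outside}, this should yield exponential decay of $\P(\eta^n_k = 1)$ for every $\epsilon > 0$. Making this last step quantitatively precise is the content of Vaserstein's argument \cite{vas} and constitutes the genuine technical difficulty of the proposition.
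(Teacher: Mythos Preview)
The paper does not prove this proposition; it merely quotes the result from Vasilyev~\cite{vas}. So there is no argument in the paper to compare against. Your proposal is more ambitious in that it sketches a coupling/percolation approach, but the sketch has a genuine gap at the decisive step.

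The coupling and the domination $D^n_k \le \eta^n_k$ are correct and standard. The difficulty is exactly the one you identify: for small $\epsilon$ the oriented site percolation $\eta$ is supercritical and $\P(\eta^n_k=1)$ does \emph{not} tend to $0$. Your proposed remedy---use condition~$(i)$ or~$(ii)$ to force exponential decay of $\P(\eta^n_k=1)$---cannot work as written, because $\eta$ is a functional of the i.i.d.\ refresh variables $(\xi^{n}_k)$ alone and is entirely independent of the PCA's transition function. No property of $f$, of $g_1$, or of Lemma~\ref{outside} can alter the law of $\eta$. What is actually needed is a finer coupling in which condition~$(i)$/$(ii)$ is built into the update rule so that the disagreement process $D$ itself contracts for every $\epsilon>0$; the crude domination by $\eta$ must be replaced, not patched after the fact. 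You do not supply such a construction and ultimately defer to~\cite{vas}, which is precisely what the paper does. (Minor point: the author of~\cite{vas} is Vasilyev, not Vaserstein.)
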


Assessing the ergodicity of a PCA is a difficult problem, which is
algorithmically undecidable in general, see \cite{toom00,BMM11}. On
the other hand, a long standing conjecture had
been that any PCA with positive rates is ergodic. However, in 2001,
G\'{a}cs disproved the conjecture by exhibiting a very complex
counter-example with several invariant
measures~\cite{gacs}. In this complicated landscape, Proposition \ref{pr-ergodic} gives a restricted
setting in which ergodicity can be proven. 

\medskip

Observe that Proposition \ref{pr-ergodic} is not true without the
positive-rates assumption. Consider for instance the PCA defined by: 
$\theta_{00}=p/(1-p), \theta_{01}=0,\theta_{10}=0, \theta_{11}=1$ for
some $p\in (0,1/2]$. It satisfies $(i)$ and $(ii)$, but it is not
ergodic since $\delta_{1^{\Z}}$ and $\mu_{p}$ are both invariant. 

\subsection{Transversal PCA}\label{sse-conjug}

We assume that $\mu_p$ is invariant under the action of the PCA, and
we focus on the correlation structure of the space-time diagram obtained
when the initial measure is $\mu_p$. Observe that this space-time
diagram is both space-stationary and time-stationary. By time-stationarity, the
space-time diagram can be extended from $\Z\times \N$ to
$\Z^2$. From now on, we work with this extension. 

Let $(X_{k,n})_{k,n\in \Z\times \Z}$ be a realization of the
stationary space-time diagram.

\begin{center}
\includegraphics[scale=0.75]{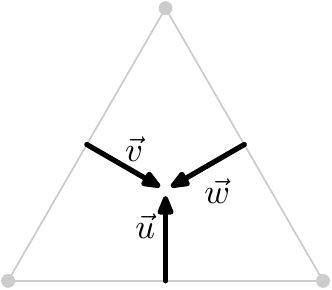}
\end{center}

It is convenient to define the three vectors $\vec{u}, \vec{v},$ and
$\vec{w}$ as in the figure above. 
The PCA generating the space-time diagram is the PCA of direction
$\vec{u}$. In some cases, the space-time diagram when rotated by an angle
of $2\pi/3$ (resp. $-2\pi/3$) still has the correlation structure of a space-time
diagram generated by a PCA of neighborhood $\{0,1\}$. In this case, we say that, in the
original space-time diagram, there is a {\em transversal PCA} of direction
  $\vec{v}$ (resp. $\vec{w}$). 

\begin{prop}\label{ACPdir1}
Under condition {\it (i)}, each line of angle $\pi/3$ of the
space-time diagram is distributed according to $\mu_p$. Moreover,
their correlations are the ones of a \emph{transversal} PCA of
direction $\vec{v}$ and rates given by:
$\vartheta_{00}=\theta_{00}$, $\vartheta_{01}=\theta_{10}$,
$\vartheta_{10}=\theta_{01}$, $\vartheta_{11}=\theta_{11}$.
\end{prop}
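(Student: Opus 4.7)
The plan is as follows. First, I identify the ``lines of angle $\pi/3$'' as the time-columns $\{(k,n) : n\in\Z\}$ for fixed $k$, i.e.\ the lines parallel to $\vec u$. Unlike the directions covered by Lemma \ref{outside}, these lines lie exactly on the boundary of the dependence cone of each of their points, so the i.i.d.\ property is a genuine consequence of condition (i), not a free corollary of the geometry.

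The proof then splits into two halves. For the first (each column is $\mu_p$-distributed), I start from the pairwise computation
$$\P(X^{n+1}_k = 1 \mid X^n_k = a) \;=\; \sum_c \ber_p(c)\,\theta_{ac} \;=\; (1-p)\theta_{a0} + p\,\theta_{a1} \;=\; p,$$
which uses invariance of $\mu_p$ at time $n$ (so $X^n_{k+1}\sim\ber_p$ independently of $X^n_k$) and condition (i) at the last step. Hence $X^{n+1}_k \perp X^n_k$ with marginal $\ber_p$. To lift this to joint independence of $(X^n_k, X^{n+1}_k, \ldots, X^{n+L}_k)$, I would induct on $L$, carrying the parallel statement that the ``off-column'' cell $X^{m}_{k+1}$ that feeds into producing $X^{m+1}_k$ is itself $\ber_p$ independent of the column values already revealed; condition (i) closes this induction because averaging $\theta_{a,\cdot}$ against $\ber_p$ always yields $p$.

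For the second half (transversal PCA identification), the core identity is
$$\P(X^n_k = a,\; X^n_{k-1} = c,\; X^{n+1}_{k-1} = b) \;=\; \ber_p(a)\,\ber_p(c)\,\theta_{ca}^{\,b},$$
obtained by conditioning on the (i.i.d.) time-$n$ row and applying the original PCA rule to $X^{n+1}_{k-1}$ with parents $(X^n_{k-1}, X^n_k)$. Dividing by $\P(X^n_{k-1}=c, X^{n+1}_{k-1}=b) = \ber_p(c)\ber_p(b)$, known from the first half, one obtains
$$\P(X^n_k = a \mid X^n_{k-1} = c,\; X^{n+1}_{k-1} = b) \;=\; \frac{\ber_p(a)\,\theta_{ca}^{\,b}}{\ber_p(b)},$$
and the two equations of condition (i) let me collapse this ratio case by case to $\vartheta_{bc}^{\,a}$ with the rates $\vartheta_{bc}=\theta_{cb}$ prescribed by the statement: for instance, the case $(a,b,c)=(1,0,0)$ reduces via the identity $p(1-\theta_{01})=(1-p)\theta_{00}$ from condition (i) to $\theta_{00}=\vartheta_{00}$, and symmetrically for the other cases.

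Finally, to promote this single-cell matching to the joint statement that all of column $k$ conditionally on column $k-1$ is produced by independent $\vartheta$-Bernoulli draws, I would observe that the rates $\vartheta_{bc}=\theta_{cb}$ automatically satisfy condition (ii) for $\mu_p$-invariance — it is literally the original condition (i) rewritten — so the transversal PCA genuinely preserves $\mu_p$. Matching the finite-dimensional distributions of the two descriptions then reduces, via the time-direction Markov property of the original PCA and the column independence from the first half, to iterating the single-cell identity. The main obstacle is precisely this last lifting step: turning a pointwise conditional identity into a factorized conditional for an entire column requires a careful bookkeeping of which cells in the original diagram are conditionally independent given which, and is where the bulk of the technical work sits.
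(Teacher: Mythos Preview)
Your single-cell computations are correct and coincide with the paper's base cases, and the overall strategy (column i.i.d.\ first, then identify transversal rates) is also the paper's. The gap is in how you close the inductions.

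For the first half, the invariant you propose to carry---that the single off-column cell $X^{m}_{k+1}$ is $\ber_p$ and independent of the column so far---is too weak to propagate. To pass from level $m$ to $m{+}1$ you also need $X^{m+1}_{k+1}$ independent of the enlarged column, but $X^{m+1}_{k+1}$ is produced from $(X^m_{k+1},X^m_{k+2})$, and your hypothesis says nothing about $X^m_{k+2}$. The induction does close if you strengthen the invariant to: the \emph{entire} half-row $(X^m_j)_{j\ge k}$ is i.i.d.\ $\ber_p$ jointly with the column $(X^0_k,\dots,X^m_k)$. The right-to-left telescoping under condition~(i) that verifies this is exactly the computation in the paper's Lemma~\ref{lem1}. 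The paper itself runs the induction in the opposite direction: having established Lemma~\ref{lem1} (independence of $X_0$ from the full row $(Y_i)_{i\ge0}$), it conditions on the time-$1$ row and peels off the \emph{bottom} cell $X^0_0$, reducing the $n$-point column law to the $(n{-}1)$-point one.

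For the second half, your observation that the transversal rates satisfy condition~(ii) is correct but does not by itself establish that the original diagram's transversal conditional factorizes: the cells $(X^k_1)_k$ are not obviously conditionally independent given column~$0$, since each depends on its own time-$k$ horizontal row and these rows share variables. The paper's tool here is Lemma~\ref{lem2}, which gives the joint law of $(X_0,X_1,Y_0,\dots,Y_n)$ as $\mu_p[x_0]\,\mu_p[x_1]\,\theta_{x_0x_1}^{y_0}\prod_{i\ge1}\mu_p[y_i]$; the induction again conditions on the time-$1$ row and peels from the bottom, using the identity $\mu_p[\beta_0]\,\theta_{\alpha_0\beta_0}^{\alpha_1}=\mu_p[\alpha_1]\,\vartheta_{\alpha_1\alpha_0}^{\beta_0}$ (your single-cell computation, rewritten) at each step.
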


To prove Prop. \ref{ACPdir1}, we need two preliminary lemmas. Set $X=X^0$ and $Y=X^1$, so that we have in particular
$(X,Y)\sim (\mu_p, \mu_p F)$. 

\begin{center}
\includegraphics[scale=0.75]{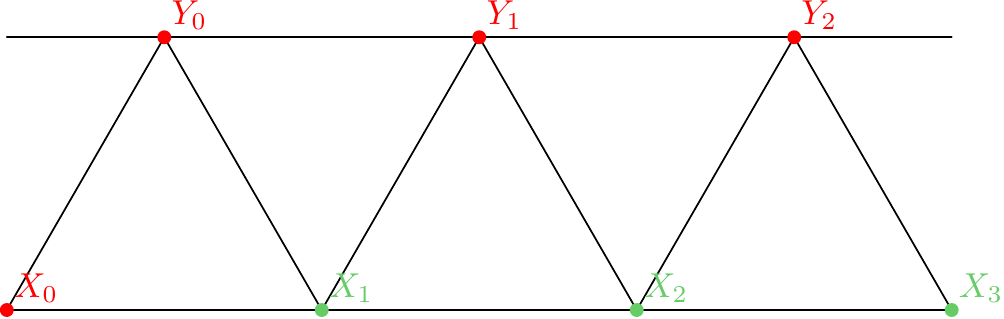}
\end{center}

\begin{lemm}\label{lem1}
Under condition {\it (i)}, the variables $(Y_k)_{k\geq 0}$ are independent of $X_0$, that is, for any $n\geq 0$,
$$\P\bigl(X_0=x_0,(Y_i)_{0\leq i\leq n}=(y_i)_{0\leq i\leq n}\bigr)=\mu_p[x_0]\prod_{i=0}^n\mu_p[y_i]\:.$$
\end{lemm}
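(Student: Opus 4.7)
The plan is to expand the joint probability on the left-hand side as a sum over the intermediate row $(X_1,\ldots,X_{n+1})$, and then collapse this sum by iterated application of condition $(i)$.

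First, I would use the definition of the PCA (in particular the conditional independence of the updates given the previous row) together with $X\sim\mu_p$ to write
\[
\P\bigl(X_0=x_0,\,Y_0=y_0,\ldots,Y_n=y_n\bigr)=\mu_p[x_0]\sum_{x_1,\ldots,x_{n+1}\in\A}\Bigl(\prod_{i=1}^{n+1}\mu_p[x_i]\Bigr)\prod_{i=0}^{n}\theta_{x_ix_{i+1}}^{y_i}.
\]
Here I use that $(X_0,\ldots,X_{n+1})$ is i.i.d.\ $\ber_p$ and that, conditionally on this row, the values $Y_0,\ldots,Y_n$ are independent with $\P(Y_i=y_i\mid X)=\theta_{x_ix_{i+1}}^{y_i}$.

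Second, I would reformulate condition $(i)$ as the single identity
\[
\sum_{y\in\A}\mu_p[y]\,\theta_{xy}^{\a}=\mu_p[\a]\qquad(\forall\,x,\a\in\A).
\]
The case $\a=1$ is precisely condition $(i)$ read twice (for $x=0$ and $x=1$); the case $\a=0$ then follows by complementation since $\theta_{xy}^0+\theta_{xy}^1=1$.

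Third, I would sum out the variables $x_{n+1},x_n,\ldots,x_1$ one at a time, from right to left. At each step, the identity above gives
\[
\sum_{x_{i+1}\in\A}\mu_p[x_{i+1}]\,\theta_{x_ix_{i+1}}^{y_i}=\mu_p[y_i],
\]
producing a factor $\mu_p[y_i]$ that no longer depends on $x_i$; this frees the next summation (over $x_i$), which then collapses to $\mu_p[y_{i-1}]$, and so on. After $n+1$ such collapses, the expression reduces to $\mu_p[x_0]\prod_{i=0}^{n}\mu_p[y_i]$, which is the desired factorization.

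The argument is essentially a telescoping sum, so no individual step is delicate. The main conceptual point is to recognize that condition $(i)$ is exactly the identity that makes the iterative collapse work: summing instead from the left would require the transposed identity, which is the content of the dual condition $(ii)$. This asymmetry also explains why the lemma is stated only for the right-half indices $k\geq 0$.
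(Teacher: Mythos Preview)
Your proof is correct and follows essentially the same approach as the paper: expand over $x_1,\ldots,x_{n+1}$, rewrite condition $(i)$ as $\sum_{b}\mu_p[b]\,\theta_{ab}^{c}=\mu_p[c]$, and collapse the nested sum from right to left. Your closing remark about the left/right asymmetry and its relation to condition $(ii)$ is a nice addition not made explicit in the paper's proof.
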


\begin{proof} 
The left-hand side can be decomposed into:
$$\sum_{x_1\cdots x_{n+1}\in\{0,1\}^{n+1}}\P\bigl((X_i)_{0\leq i\leq n+1}=(x_i)_{0\leq i\leq n+1},(Y_i)_{0\leq i\leq n}=(y_i)_{0\leq i\leq n}\bigr),$$
which can be expressed with the transition rates of the PCA as follows:
$$\sum_{x_1 \cdots x_{n+1}\in\{0,1\}^{n+1}}\mu_p[x_0]\prod_{i=0}^{n} \mu_p[x_{i+1}] \theta_{x_ix_{i+1}}^{y_i}$$
$$=\mu_p[x_0]\sum_{x_1\in\{0,1\}}\mu_p[x_1]\theta_{x_0x_1}^{y_0}\sum_{x_2\in\{0,1\}}\mu_p[x_2]\theta_{x_1x_2}^{y_1}\
\cdots \ \sum_{x_{n+1}\in\{0,1\}}\mu_p[x_{n+1}]\theta_{x_n
  x_{n+1}}^{y_n}.$$ 
Condition ${\it (i)}$ can be rewritten as: 
\[
\forall a,b,c \in \{0,1\}, \qquad \sum_{b\in\{0,1\}}\mu_p[b]\theta_{ab}^c=\mu_p[c]\: .
\]
Using this, 
and simplifying from the right to the left, we obtain: $\mu_p[x_0]\prod_{i=0}^n\mu_p[y_i]$.\end{proof}

\begin{lemm}\label{lem2}
Under condition {\it (i)}, for any $n\geq 0$,
$$\P(X_0=x_0,X_1=x_1,(Y_i)_{0\leq i\leq n}=(y_i)_{0\leq i\leq n})=\mu_p[x_0]\mu_p[x_1]\theta_{x_0x_1}^{y_0}\prod_{i=1}^n\mu_p[y_i]\:.$$
\end{lemm}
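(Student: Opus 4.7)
The plan is to mimic the proof of Lemma~\ref{lem1}, only this time we refuse to sum over $X_1$, which is now fixed at $x_1$. So the strategy is: expand the joint probability as a marginal over the remaining coordinates of $X$, factor using the Markov property given by the PCA dynamics, and then use condition~{\it (i)} as a right-to-left telescoping identity on the kernel $\theta$.

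Concretely, I would first write
\[
\P(X_0=x_0, X_1=x_1, (Y_i)_{0\leq i\leq n}=(y_i)_{0\leq i\leq n})
= \sum_{x_2,\ldots,x_{n+1}\in\{0,1\}}
\mu_p[x_0]\mu_p[x_1]\cdots\mu_p[x_{n+1}] \prod_{i=0}^{n} \theta_{x_ix_{i+1}}^{y_i},
\]
using that under $\mu_p$ the $X_i$'s are i.i.d.\ $\ber_p$ and that, given $(X_i)$, the $Y_i$'s are independent with $\P(Y_i = y_i \mid X) = \theta_{x_ix_{i+1}}^{y_i}$. Next I would pull out the factor that is independent of the summation variables, namely $\mu_p[x_0]\mu_p[x_1]\theta_{x_0x_1}^{y_0}$, so that the remaining expression is a nested sum
\[
\sum_{x_2}\mu_p[x_2]\theta_{x_1x_2}^{y_1}\sum_{x_3}\mu_p[x_3]\theta_{x_2x_3}^{y_2}\cdots\sum_{x_{n+1}}\mu_p[x_{n+1}]\theta_{x_nx_{n+1}}^{y_n}.
\]

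Now I would invoke the reformulation of condition {\it (i)} that was already used in the proof of Lemma~\ref{lem1}: for all $a,c\in\{0,1\}$, $\sum_b \mu_p[b]\theta_{ab}^c = \mu_p[c]$, independently of $a$. Applying this to the innermost sum over $x_{n+1}$ produces $\mu_p[y_n]$, a constant that can be pulled out; the next sum over $x_n$ then produces $\mu_p[y_{n-1}]$, and so on, collapsing the nested sums from right to left. After $n$ applications we are left with $\mu_p[x_0]\mu_p[x_1]\theta_{x_0x_1}^{y_0}\prod_{i=1}^n\mu_p[y_i]$, which is precisely the claimed formula.

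There is no real obstacle here: the argument is a direct adaptation of Lemma~\ref{lem1}. The only care needed is that the factor $\theta_{x_0x_1}^{y_0}$ must stay outside the sums (since $x_0,x_1$ are now fixed), which is exactly why the output has an extra $\theta_{x_0x_1}^{y_0}$ compared to Lemma~\ref{lem1} and why only the variables $y_1,\ldots,y_n$ appear in the product (rather than $y_0,\ldots,y_n$).
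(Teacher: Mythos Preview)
Your proposal is correct and follows essentially the same approach as the paper's proof: both expand over $x_2,\ldots,x_{n+1}$, factor out $\mu_p[x_0]\mu_p[x_1]\theta_{x_0x_1}^{y_0}$, and then collapse the nested sums from right to left using the identity $\sum_b \mu_p[b]\theta_{ab}^c=\mu_p[c]$ derived from condition~{\it (i)}. The paper's proof is in fact slightly terser than yours but identical in substance.
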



\begin{proof} The proof is analogous. We decompose the left-hand side into:
$$\sum_{x_2\cdots x_{n+1}\in\{0,1\}^{n}}\P((X_i)_{0\leq i\leq n+1}=(x_i)_{0\leq i\leq n+1},(Y_i)_{0\leq i\leq n}=(y_i)_{0\leq i\leq n}),$$
which can be expressed with the transition rates of the PCA as follows:
$$\sum_{x_2\cdots x_{n+1}\in\{0,1\}^{n}}\mu_p[x_0]\prod_{i=0}^{n} \mu_p[x_{i+1}] \theta_{x_ix_{i+1}}^{y_i}$$
$$=\mu_p[x_0]\mu_p[x_1]\theta_{x_0x_1}^{y_0}\sum_{x_2\in\{0,1\}}\mu_p[x_2]\theta_{x_1x_2}^{y_1}\ldots\sum_{x_{n+1}\in\{0,1\}}\mu_p[x_{n+1}]\theta_{x_n
  x_{n+1}}^{y_n}.$$
Using ${\it (i)}$ and simplifying from the right to the left, we get the result. \end{proof}




\begin{proof}[Proof of Proposition \ref{ACPdir1}.] To prove the first
  part of the proposition, it is sufficient to prove that the sequence
  $(X_0^k)_{k\in\Z}$ is i.i.d. For a given $n\in \N$ and a sequence
  $(\alpha_k)_{0\leq k\leq n}$, let us prove recursively that
  $\P((X_0^n)_{0\leq k\leq n}=(\a_k)_{0\leq k\leq
    n})=\mu_p[\alpha_0\cdots \alpha_n].$ For $n=0$, the result is
  straightforward; and for $n=1$, it is a direct consequence of Lemma
  \ref{lem1}. For larger values of $n$, set $A=\P((X_0^k)_{0\leq
    k\leq n}=(\a_k)_{0\leq k\leq n})$, we have:
$$A \ = \sum_{y_1\cdots y_{n-1}\in\{0,1\}^{n-1}}\P\bigl((X_0^k)_{0\leq k\leq
  n}=(\a_k)_{0\leq k\leq n}, (Y_i)_{1\leq i\leq n-1}=(y_i)_{1\leq
  i\leq n-1}\bigr) \:.$$
Since $X^0_0=X_0, X_0^1=Y_0$, it can be rewritten as:
\begin{eqnarray*}
A&=& \sum_{y_1\cdots y_{n-1}\in\{0,1\}^{n-1}}\P\bigl((X_0^k)_{2\leq k\leq n}=(\a_k)_{2\leq k\leq n}\mid X_0=\alpha_0, Y_0=\alpha_1, (Y_i)_{1\leq i\leq n-1}=(y_i)_{1\leq i\leq n-1}\bigl)\\
&& \qquad \qquad \qquad \qquad \times \; \P\bigl(X_0=\alpha_0, Y_0=\alpha_1, (Y_i)_{1\leq i\leq  n-1}=(y_i)_{1\leq i\leq n-1}\bigl)\:.
   \end{eqnarray*}
The law of $(X_0^k)_{2\leq k\leq n}$ conditionally to
$(X_0,(Y_i)_{0\leq i\leq n-1})$ is equal to the law of $(X_0^k)_{2\leq
  k\leq n}$ conditionally to $(Y_i)_{0\leq i\leq n-1}$. 
 Also, 
using Lemma \ref{lem1}, we have: $\P\bigl(X_0=\alpha_0, Y_0=\alpha_1, (Y_i)_{1\leq i\leq
  n-1}=(y_i)_{1\leq i\leq n-1}\bigl)= \mu_p[\alpha_0] \;
\P(Y_0=\alpha_1, (Y_i)_{1\leq i\leq n-1}=(y_i)_{1\leq i\leq
  n-1})$. Coupling these two points, we get:  
\begin{eqnarray*}
A & = & \sum_{y_1\cdots y_{n-1}\in\{0,1\}^{n-1}}\P\bigl((X_0^k)_{2\leq
  k\leq n}=(\a_k)_{2\leq k\leq n}\mid Y_0=\alpha_1, (Y_i)_{1\leq i\leq
  n-1}=(y_i)_{1\leq i\leq n-1}\bigl) \\
&& \qquad \qquad \qquad \qquad \times \; \mu_p[\alpha_0] \; \P(Y_0=\alpha_1, (Y_i)_{1\leq
  i\leq n-1}=(y_i)_{1\leq i\leq n-1}) \\
&&\\
& = & \mu_p[\alpha_0]\; \P\bigl((X_0^k)_{1\leq k\leq n}=(\a_k)_{1\leq
  k\leq n}\bigr) \:.
\end{eqnarray*}
By induction, we obtain the result. 

\begin{center}
\includegraphics[scale=0.6]{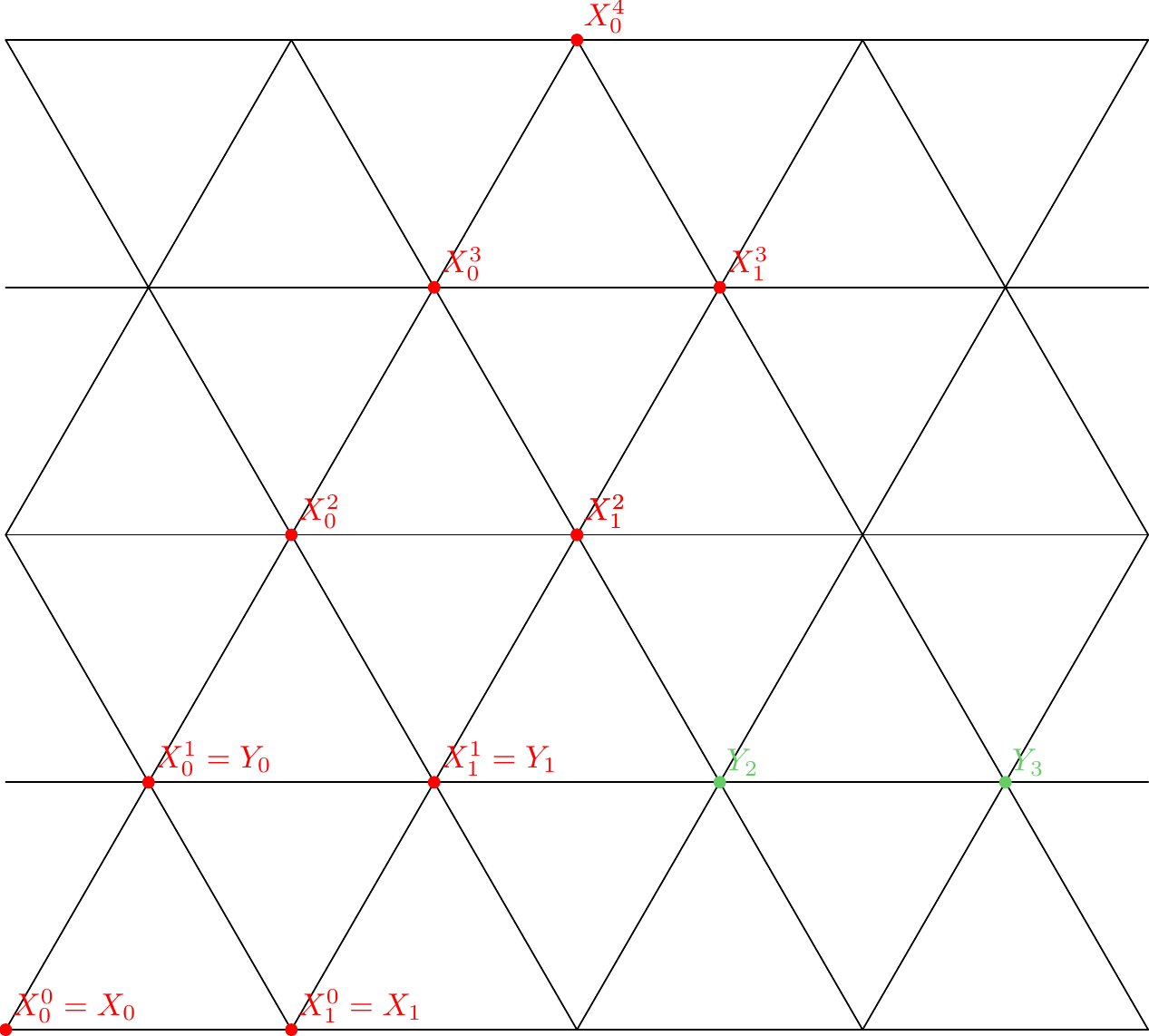}
\end{center}

The second part of the proposition consists in proving that 
\begin{equation}\label{eq-equiv}
\P\bigl( (X^k_1)_{0\leq k\leq n}=(\beta_k)_{0\leq k\leq n}\mid
(X^k_0)_{0\leq k\leq n+1}=(\alpha_k)_{0\leq k\leq n+1} \bigr)=\prod_{k=0}^n
\vartheta_{\alpha_{k+1}\alpha_{k}}^{\beta_k}\:.
\end{equation}

We prove the result recursively. For $n=0$, set $A= \P(X_1=\beta_0
\mid Y_0=\a_1, X_0=\a_0 )$. We want to prove that $A=
\vartheta^{\beta_0}_{\a_1\a_0}$. 
Using the first part of the proposition, we have: 
\begin{eqnarray*}
A & = & \P(Y_0=\a_1 \mid
X_0=\a_0,X_1=\beta_0 ) \; \P(X_0=\a_0,X_1=\beta_0 ) \;
\P(X_0=\a_0,Y_0=\a_1)^{-1} \\
& = & \theta^{\a_1}_{\a_0\beta_0}\; 
\mu_p[\a_0]\;\mu_p[\beta_0]\; \mu_p[\a_0]^{-1}\;\mu_p[\a_1]^{-1} \quad = \quad
\theta^{\a_1}_{\a_0\beta_0} \;\mu_p[\beta_0]\;\mu_p[\a_1]^{-1}  \:.
\end{eqnarray*}
If $\a_1=\beta_0=u$, we get $A= \theta^{u}_{\a_0 u} =
\vartheta^{u}_{u\a_0}$. Assume that $\a_1\not=\beta_0$. Condition
$(i)$ can be rewritten as: 
\begin{equation}\label{eq-rewrite}
\mu_p[\beta_0]\; \theta^{\a_1}_{\a_0\beta_0} +
\mu_p[\a_1]\; \theta^{\a_1}_{\a_0\a_1} = \mu_p[\a_1]\:.
\end{equation}
Dividing by $\mu_p[\a_1]$, we get: 
\[
A= \theta^{\a_1}_{\a_0\beta_0} \; \mu_p[\beta_0]\; \mu_p[\a_1]^{-1} = 1 -
\theta^{\a_1}_{\a_0\a_1} =\theta^{\beta_0}_{\a_0\a_1} =\vartheta^{\beta_0}_{\a_1\a_0}\:.
\]

For larger $n$, it is convenient to prove next equality, which is
equivalent to \eref{eq-equiv}:
$$\P\bigl((X^k_0)_{0\leq k\leq n+1}=(\a_k)_{0\leq k\leq n+1},(X^k_1)_{0\leq
  k\leq n}=(\beta_k)_{0\leq k\leq n}\bigr)\ =\ \mu_p[\a_{n+1}]\prod_{k=0}^{n}
\mu_p[\a_k]\vartheta_{\alpha_{k+1}\alpha_{k}}^{\beta_k}.$$ 
The left-hand side can be decomposed into:
$$\sum_{y_2\cdots y_{n}\in\{0,1\}^{n-1}}\P\bigl((X^k_0)_{0\leq k\leq
  n+1}=(\a_k)_{0\leq k\leq n+1},(X^k_1)_{0\leq k\leq
  n}=(\beta_k)_{0\leq k\leq n},(Y_i)_{2\leq i\leq n}=(y_i)_{2\leq
  i\leq n}\bigr)\:.$$ 
Let us decompose each term of the sum, conditioning by the values of $X_0,X_1,Y_0,$ and $Y_1$.
We have: 
{\footnotesize $$\P\bigl((X^k_0)_{2\leq k\leq n+1}=(\a_k)_{2\leq k\leq n+1},(X^k_1)_{2\leq k\leq n}=(\beta_k)_{2\leq k\leq n}\mid (X_0,X_1,Y_0,Y_1)=(\a_0,\beta_0,\a_1,\beta_1),(Y_i)_{2\leq i\leq n}=(y_i)_{2\leq i\leq n}\bigr)$$
$$=\P\bigl((X^k_0)_{2\leq k\leq n+1}=(\a_k)_{2\leq k\leq n+1},(X^k_1)_{2\leq
  k\leq n}=(\beta_k)_{2\leq k\leq n}\mid
(Y_0,Y_1)=(\a_1,\beta_1),(Y_i)_{2\leq i\leq n}=(y_i)_{2\leq i\leq
  n}\bigr)\:.$$}

and using Lemma \ref{lem2}, and the equality
$\mu_p[\beta_0]\theta_{\a_0\beta_0}^{\a_1}=\mu_p[\a_1]\vartheta_{\a_1\a_0}^{\beta_0}$  (see \eref{eq-rewrite}):
$$\P\bigl( (X_0,X_1,Y_0,Y_1)=(\a_0,\beta_0,\a_1,\beta_1),(Y_i)_{2\leq
  i\leq n}=(y_i)_{2\leq i\leq n} \bigr)\qquad \qquad \qquad $$
\begin{eqnarray*}
& = & \mu_p[\a_0]\mu_p[\beta_0]\theta_{\a_0\beta_0}^{\a_1}\;
\P\bigl(Y_1=\beta_1,(Y_i)_{2\leq i\leq n}=(y_i)_{2\leq i\leq n}\bigr) \\
&  = & \mu_p[\a_0]\mu_p[\a_1]\vartheta_{\a_1\a_0}^{\beta_0}\;
\P\bigl(Y_1=\beta_1,(Y_i)_{2\leq i\leq n}=(y_i)_{2\leq i\leq n}\bigr) \\
& = & \mu_p[\a_0]\vartheta_{\a_1\a_0}^{\beta_0}\;
\P\bigl((Y_0,Y_1)=(\a_1,\beta_1),(Y_i)_{2\leq i\leq n}=(y_i)_{2\leq
  i\leq n}\bigr)\:. 
\end{eqnarray*}
Assembling the pieces together, we obtain:
$$\P\bigl((X^k_0)_{0\leq k\leq n+1}=(\a_k)_{0\leq k\leq n+1},(X^k_1)_{0\leq k\leq n}=(\beta_k)_{0\leq k\leq n}\bigr)$$
$$=\mu_p[\a_0]\vartheta_{\a_1\a_0}^{\beta_0}\; \P\bigl((X^k_0)_{1\leq k\leq n+1}=(\a_k)_{1\leq k\leq n+1},(X^k_1)_{1\leq k\leq n}=(\beta_k)_{1\leq k\leq n}\bigr)\:.$$
We conclude the proof by induction.
\end{proof}

\begin{cor}\label{cor1} Under condition {\it (i)}, all the lines of the space-time diagram except possibly those of angle $2\pi/3$ consist of i.i.d. random variables.
\end{cor}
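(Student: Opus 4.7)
My plan is to combine Lemma~\ref{outside} applied to the original PCA $F$ with Lemma~\ref{outside} applied to the transversal PCA $\widetilde{F}$ provided by Proposition~\ref{ACPdir1}. The stationary space-time diagram is simultaneously a realization of both PCAs, in directions $\vec{u}$ and $\vec{v}$ respectively, so the two ``forbidden'' angular sectors for Lemma~\ref{outside} must be merged; these sectors turn out to overlap only along the direction of angle $2\pi/3$, which is exactly the exception in the statement.

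First I would dispose of the three axial lattice directions. Horizontal lines carry i.i.d.\ $\mu_p$ by the invariance $\mu_pF=\mu_p$; the columns (angle $\pi/3$) are i.i.d.\ $\mu_p$ by the first part of Proposition~\ref{ACPdir1}; the angle $2\pi/3$ is the excluded one. For a non-axial line through the origin, I parameterize its direction by its slope $\alpha$ in the $(k,n)$-plane. Lemma~\ref{outside} applied to $F$ yields the i.i.d.\ property as soon as $\alpha>-1$, which, after translating between the Cartesian lattice representation and the $(k,n)$-coordinates, covers precisely the lattice angles in $[0,\pi/3)\cup(2\pi/3,\pi)$ modulo $\pi$. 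Lines not passing through the origin reduce to the previous case by translation, using the space-homogeneity and time-stationarity of the diagram.

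The remaining range consists of the slopes $\alpha<-1$, which correspond to lattice angles in $(\pi/3,2\pi/3)$. For these I would use $\widetilde{F}$: by Theorem~\ref{iff} the Bernoulli measure $\mu_p$ is invariant for $\widetilde{F}$ (condition~$(i)$ for $F$ becomes condition~$(ii)$ for $\widetilde{F}$), and $\widetilde{F}$ has neighborhood $\{0,1\}$. Lemma~\ref{outside} applied to $\widetilde{F}$ therefore gives i.i.d.\ lines whenever the slope $\tilde\alpha$ expressed in the transversal coordinates satisfies $\tilde\alpha>-1$. A short computation relating the two coordinate systems yields $\tilde\alpha=1/\alpha$, so this second application covers exactly the range $\alpha<-1$, missing only the value $\alpha=-1$, which corresponds to the lattice angle $2\pi/3$. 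The only delicate step is this geometric bookkeeping: identifying lattice angles with the slopes $\alpha$ and $\tilde\alpha$, and verifying that the union of the two admissible ranges exhausts every direction except $2\pi/3$.
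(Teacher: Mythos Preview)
Your approach is correct and is essentially the paper's own proof: handle angle $\pi/3$ via Proposition~\ref{ACPdir1}, use Lemma~\ref{outside} for $F$ to cover $[0,\pi/3)\cup(2\pi/3,\pi]$, and use Lemma~\ref{outside} for the transversal PCA to cover $(\pi/3,2\pi/3)$. The paper's proof is a three-line sketch of exactly this; your version just makes the coordinate bookkeeping explicit (and your detour through Theorem~\ref{iff} to check $\mu_p\widetilde F=\mu_p$ is correct but redundant, since the first part of Proposition~\ref{ACPdir1} already gives this). One minor slip: the transversal application does not cover ``exactly'' $\alpha<-1$ but also $\alpha>0$ and $\alpha=\infty$; this is harmless since those directions are already handled.
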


\begin{proof} The previous proposition claims that the lines of angle
  $\pi/3$ are i.i.d. Lemma \ref{outside} provides the result for
  the lines of angle in $[0,\pi/3)\cup (2\pi/3,\pi]$. The
  angles in $(\pi/3,2\pi/3)$ correspond to lines that are outside the
  dependence cones of the transversal PCA, so we obtain the result by
  applying again Lemma \ref{outside} for the transversal
  PCA. \end{proof} 

In the same way, one can prove the following.

\begin{prop}\label{ACPdir2}
Under condition {\it (ii)}, the lines of angle $2\pi/3$ of the
space-time diagram are distributed according to $\mu_p$ and their
correlations are those of a \rm{transversal} PCA of direction
$\vec{w}$ and rates given by $\vartheta_{00}=\theta_{00}$,
$\vartheta_{11}=\theta_{11}$ and $\vartheta_{01}=\theta_{10}$,
$\vartheta_{10}=\theta_{01}$. 
\end{prop}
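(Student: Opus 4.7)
The plan is to deduce this statement from Proposition \ref{ACPdir1} via the space-reversal trick already used in the proof of Theorem \ref{iff}. The key observation is that conditions {\it (i)} and {\it (ii)} are exchanged by reading configurations from right to left, while the Bernoulli product measure $\mu_p$ is invariant under this reversal; hence the situation handled by Proposition \ref{ACPdir1} in the mirrored diagram describes exactly the situation of Proposition \ref{ACPdir2} in the original diagram, up to a relabeling of parameters.

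Concretely, I would let $\widetilde{F}$ be the PCA obtained from $F$ by reversing the space direction: its parameters are $\widetilde{\theta}_{00}=\theta_{00}$, $\widetilde{\theta}_{01}=\theta_{10}$, $\widetilde{\theta}_{10}=\theta_{01}$, $\widetilde{\theta}_{11}=\theta_{11}$. If $F$ satisfies condition {\it (ii)}, then $\widetilde{F}$ satisfies condition {\it (i)}, and $\mu_p$ remains invariant for $\widetilde{F}$ (since Bernoulli product measures are invariant under space reversal). The stationary space-time diagram of $\widetilde{F}$ is the left-right mirror of that of $F$, so any distributional property of the former transports to a mirrored property of the latter.

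Applying Proposition \ref{ACPdir1} to $\widetilde{F}$ yields that, in the mirrored diagram, the lines of angle $\pi/3$ are i.i.d. with distribution $\mu_p$, and their correlations are those of a transversal PCA of direction $\vec{v}$ with rates $\widetilde{\vartheta}_{00}=\widetilde{\theta}_{00}=\theta_{00}$, $\widetilde{\vartheta}_{01}=\widetilde{\theta}_{10}=\theta_{01}$, $\widetilde{\vartheta}_{10}=\widetilde{\theta}_{01}=\theta_{10}$, $\widetilde{\vartheta}_{11}=\widetilde{\theta}_{11}=\theta_{11}$. Reversing space one more time transports this conclusion back to the original $F$-diagram: lines of angle $\pi/3$ become lines of angle $2\pi/3$, the transversal direction $\vec{v}$ becomes $\vec{w}$, and the rates are swapped once more, giving $\vartheta_{00}=\theta_{00}$, $\vartheta_{11}=\theta_{11}$, $\vartheta_{01}=\widetilde{\vartheta}_{10}=\theta_{10}$, $\vartheta_{10}=\widetilde{\vartheta}_{01}=\theta_{01}$, which are exactly the claimed rates.

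The only delicate point is the bookkeeping of the two successive parameter swaps; one must check that composing them lands on the formulas in the statement, but this is a routine verification and I do not expect any substantive obstacle, since the whole argument is symmetry-driven once Proposition \ref{ACPdir1} is available. As an alternative, one could mimic the proof of Proposition \ref{ACPdir1} directly, proving analogs of Lemmas \ref{lem1} and \ref{lem2} by summing over the left parent rather than the right (using the rewriting $\sum_{a}\mu_p[a]\theta_{ab}^c=\mu_p[c]$ of condition {\it (ii)} in place of the analogous identity for condition {\it (i)}) and then running the same inductive argument along the lines of angle $2\pi/3$; this direct route is however strictly longer than the symmetry reduction above.
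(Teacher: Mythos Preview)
Your proposal is correct. The paper gives no detailed proof of Proposition~\ref{ACPdir2}; it simply states ``In the same way, one can prove the following,'' leaving the reader to supply the argument. Your two routes---the symmetry reduction via space reversal and the direct mimicry of Lemmas~\ref{lem1}--\ref{lem2} under condition~{\it (ii)}---are both valid realisations of that phrase, and your preference for the symmetry argument is well justified: once Proposition~\ref{ACPdir1} is in hand, the reflection $\widetilde{X}^n_k=X^n_{-k-n}$ converts condition~{\it (ii)} into condition~{\it (i)}, lines of angle $2\pi/3$ into lines of angle $\pi/3$, and $\vec w$ into $\vec v$, so nothing needs to be reproved. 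Your bookkeeping of the two successive swaps $\vartheta_{ab}=\widetilde{\vartheta}_{ba}=\widetilde{\theta}_{ab}=\theta_{ba}$ is correct and lands exactly on the stated rates; it may be worth recording explicitly that the second swap arises because the reflection also reverses the ``space'' orientation along the transversal lines, which is the one point a careless reader might miss.
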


\begin{cor}\label{cor2} Under condition {\it (ii)}, all the lines of
  the space-time diagram except possibly the ones of angle $\pi/3$
  consist of i.i.d. random variables. 
\end{cor}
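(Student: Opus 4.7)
The plan is to mirror the proof of Corollary~\ref{cor1}, interchanging the roles of conditions (i) and (ii), of directions $\vec{v}$ and $\vec{w}$, and of the angles $\pi/3$ and $2\pi/3$, in keeping with the parallel between Propositions~\ref{ACPdir1} and~\ref{ACPdir2}.

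First, Proposition~\ref{ACPdir2} directly asserts that the sequence of random variables on any line of angle $2\pi/3$ is i.i.d. Second, Lemma~\ref{outside} applied to the original PCA $F$, for which $\ell=1$, furnishes the i.i.d. property for every line whose slope satisfies $\alpha>-1$; in the triangular representation this exhausts the angles in $[0,\pi/3)\cup(2\pi/3,\pi]$.

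It remains to treat the open sector of angles $(\pi/3,2\pi/3)$. Under condition~(ii), Proposition~\ref{ACPdir2} supplies a transversal PCA of direction $\vec{w}$, which means that rotating the space-time diagram by $-2\pi/3$ yields a random field with the same distributional structure as one generated by a PCA of neighborhood $\{0,1\}$. This rotation carries every line of angle in $(\pi/3,2\pi/3)$ to a line lying outside the dependence cone of the transversal PCA; a second application of Lemma~\ref{outside}, now invoked for the transversal PCA, then delivers the i.i.d. property for these lines. The only step that is not purely formal is this last geometric check, which is entirely symmetric to the implicit one already used to prove Corollary~\ref{cor1}; no new analytic ingredient is required.
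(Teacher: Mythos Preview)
Your proposal is correct and follows essentially the same approach as the paper: the paper does not write out a separate proof for Corollary~\ref{cor2} but simply remarks ``In the same way, one can prove the following,'' meaning precisely the mirror argument you have spelled out. Your three-step decomposition (Proposition~\ref{ACPdir2} for angle $2\pi/3$, Lemma~\ref{outside} for $[0,\pi/3)\cup(2\pi/3,\pi]$, and Lemma~\ref{outside} applied to the transversal PCA of direction $\vec{w}$ for $(\pi/3,2\pi/3)$) is exactly the symmetric counterpart of the proof of Corollary~\ref{cor1}.
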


For a PCA satisfying {\it
  (i)} (resp.  {\it
  (ii)}), the lines of angle $2\pi/3$ (resp. $\pi/3$) are not i.i.d., except if the PCA
also satisfies condition {\it (ii)} (resp. {\it
  (i)}). The distribution of the lines of
angle $2\pi/3$ (resp. $\pi/3$) does not 
necessary have a Markovian form either. For example, if
$\theta_{00}=\theta_{01}=1/2$ and $\theta_{10}=0,\theta_{11}=1$
(condition {\it (i)} is satisfied with $p=1/2$), one can check that
$\P(X_0^0=0,X_{-1}^1=0,X_{-2}^2=0)=19/64$ which is different $\P(X_0^0=0) \P(X_{-1}^1=0\mid
X_0^0=0)\P(X_{-2}^2=0\mid X_{-1}^1=0)= (1/2)(3/4)^2$. 

\medskip

It is an open problem to know if under condition {\it (i)} (resp. {\it
  (ii)}), it is possible to give an explicit description of the
distribution of the lines of angle $2\pi/3$ (resp. $\pi/3$). 

\section{Non-i.i.d. random field with every line i.i.d.}\label{spatial}

We now concentrate on PCA
satisfying {\bf both} conditions $(i)$ and $(ii)$ for some $p\in (0,1)$. We
consider the stationary space-time
diagram associated with $\mu_p$, and we
still denote it by $(X^n_k)_{k,n\in \Z}$.


\subsection{All the lines are i.i.d.}

For a given $p\in (0,1)$, conditions  {\it (i)} and {\it (ii)} are
both satisfied if and only if:
\begin{equation}\label{eq-i+ii}
\exists s \in \Bigl[\frac{2p-1}{p},\frac{p}{1-p}\Bigr], \qquad
\theta_{00}=\frac{p(1-s)}{1-p}, \ \theta_{01}=\theta_{10}=s, \ 
\theta_{11}=1-\frac{(1-p)s}{p} \:.
\end{equation}


\begin{exam}\rm
For any value of $p\in (0,1)$, the choice $s=p$ is allowed. In that
case, the transition rates $\theta_{ij}$ are all equal to $p$ and the stationary
random field is i.i.d., there is no dependence in the
space-time diagram. \end{exam}



\begin{exam}\rm\label{ex:sum}
If $p=1/2$, every choice of $s\in[0,1]$ is valid and the corresponding
PCA has the transition function $f(x,y)=s\; \delta_{x+y \mod 2}+(1-s)\; \delta_{x+y+1 \mod 2}$.
\end{exam}

\begin{center}
\includegraphics[scale=0.25]{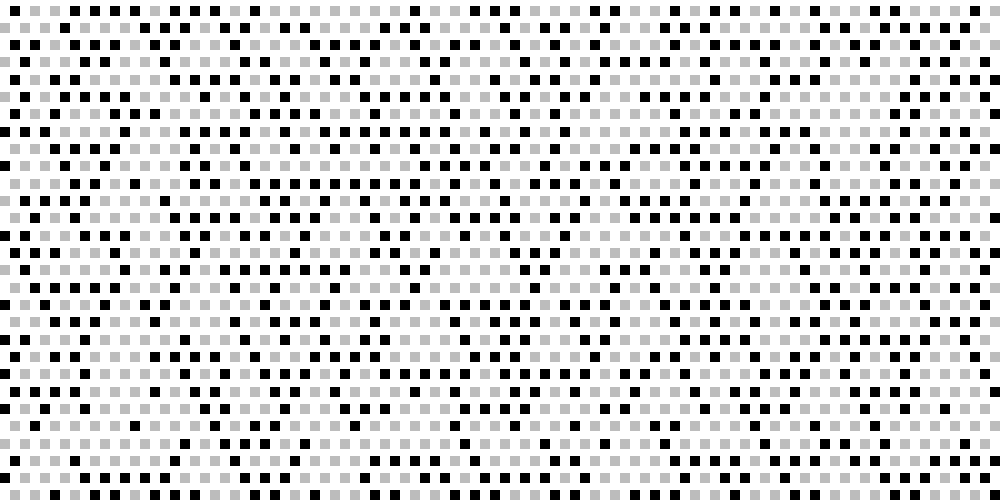}

An example of space-time diagram for $p=1/2$ and $s=3/4$
\end{center}

\begin{exam}\rm\label{ex:triang}
For any value of $p\in (0,1/2]$, it is possible to set $s=0$ and then,
$\theta_{01}=\theta_{10}=0, \theta_{11}=1$, and
$\theta_{00}=p/(1-p)$. This PCA forbids the elementary triangles
pointing up that have exactly one vertex labeled by a $0$.  
\end{exam}

\begin{center}
\includegraphics[scale=0.25]{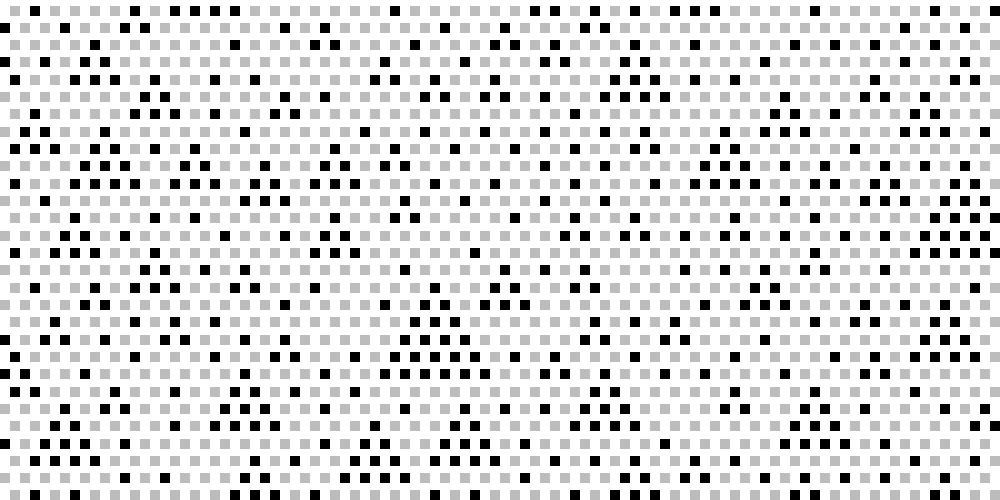}

An example of space-time diagram for $p=1/3$ and $s=0$
\end{center}

Next proposition is a direct consequence of Corollaries \ref{cor1} and
\ref{cor2}.

\begin{prop}\label{pr-all} Consider a PCA satisfying \eref{eq-i+ii}. 
Every line of the stationary space-time diagram
  consists of i.i.d. random variables. In particular, any two
  different variables are independent.  
\end{prop}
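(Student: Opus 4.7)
The plan is simply to combine Corollaries \ref{cor1} and \ref{cor2}, which together are already strong enough. First I would note that the parametrization \eref{eq-i+ii} is designed precisely so that the PCA satisfies both condition $(i)$ and condition $(ii)$ of Theorem \ref{iff} simultaneously, for the same value of $p$. Hence the stationary space-time diagram with marginal $\mu_p$ may be analyzed under either condition independently.

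Next I would invoke Corollary \ref{cor1}, which uses only condition $(i)$, to conclude that every line of the space-time diagram except possibly those of angle $2\pi/3$ consists of i.i.d.\ random variables, and Corollary \ref{cor2}, which uses only condition $(ii)$, to conclude the same for every line except possibly those of angle $\pi/3$. Since $\pi/3 \neq 2\pi/3$, the union of these two families of directions covers every possible angle, and so every line of the space-time diagram is i.i.d.

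For the ``in particular'' statement, any two distinct lattice points $(k_1,n_1),(k_2,n_2) \in \Z \times \Z$ lie on a (unique) common line of some angle, and by the previous step the variables indexed by the points of that line are i.i.d.; in particular $X^{n_1}_{k_1}$ and $X^{n_2}_{k_2}$ are independent.

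There is no real obstacle here: all the analytic work has already been done in Section 3 to establish the two transversal-PCA statements, and the present proposition is nothing more than the observation that under \eref{eq-i+ii} the directions excluded by the two corollaries are complementary, so that no direction remains uncovered.
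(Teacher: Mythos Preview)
Your proposal is correct and follows exactly the paper's own approach: the paper states that the proposition is a direct consequence of Corollaries \ref{cor1} and \ref{cor2}, and you have simply spelled out how combining them covers every direction, together with the obvious observation that any two distinct points lie on some common line.
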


\subsection{Equilateral triangles pointing up are correlated}
\label{sse-equilateral}

We have seen that all the lines of the space-time diagram are i.i.d. But the whole space-time diagram is
i.i.d. if and only if $s=p$. Indeed, if $s\neq p$, the random variable $X^{n+1}_k$ is not independent of
$(X^{n}_k,X^n_{k+1})$; in words, the
three variables of an elementary
triangle pointing up are correlated.  Precisely, the triple 
$(X^{n}_k,X^n_{k+1}, X^{n+1}_k)$ consists of random variables
which are: (1) identically distributed; (2)
pairwise independent; (3) globally dependent if $s\neq p$. 
The ``converse'' holds. 

\begin{prop}
Let $\nu$ be a law on $\{0,1\}^3$ such that the three marginals on
$\{0,1\}^2$ are i.i.d. Assume that $\nu$ is non degenerated ($\nu \neq
\delta_{000}, \nu \neq \delta_{111}$). Then $\nu$ can be realized as the law of an
``elementary triangle pointing up'' in the stationary space-time
diagram of exactly one PCA satisfying \eref{eq-i+ii}. 
\end{prop}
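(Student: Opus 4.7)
The plan is to reverse-engineer a PCA directly from the target law $\nu$. First, since each of the three pairwise marginals of $\nu$ on $\{0,1\}^2$ is a product of two identical Bernoulli laws, their univariate projections all agree, so $\nu$ has a common one-dimensional marginal $\ber_p$ for some $p\in[0,1]$. Non-degeneracy excludes $p\in\{0,1\}$, so $p\in(0,1)$ is uniquely determined by $\nu$.

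Next I would define a candidate transition function by
\[
\theta_{\alpha\beta}^{\gamma} \;:=\; \frac{\nu(\alpha,\beta,\gamma)}{\mu_p[\alpha]\,\mu_p[\beta]}.
\]
The denominator is positive; the values are nonnegative since $\nu\ge 0$; and summing over $\gamma$ yields $1$, because the first pairwise marginal of $\nu$ equals $\ber_p^{\otimes 2}$. So $f(\alpha,\beta)(\gamma):=\theta_{\alpha\beta}^{\gamma}$ defines a legitimate PCA $F$ of alphabet and neighborhood of size $2$.

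The crucial step is verifying that $F$ satisfies both conditions $(i)$ and $(ii)$ of Theorem \ref{iff}. Summing $\mu_p[\beta]\theta_{\alpha\beta}^{\gamma}$ over $\beta$ gives $\nu_{02}(\alpha,\gamma)/\mu_p[\alpha]=\mu_p[\gamma]$, where $\nu_{02}=\ber_p^{\otimes 2}$ denotes the projection onto the first and third coordinates of $\nu$; this is exactly $(i)$. Summing over $\alpha$ and using the projection onto the second and third coordinates gives $(ii)$ in the same way. Since both conditions hold, $F$ falls into the one-parameter family \eqref{eq-i+ii}; subtracting $(i)$ and $(ii)$ forces $\theta_{01}=\theta_{10}=:s$, and $\theta_{00},\theta_{11}$ are then pinned down in terms of $p$ and $s$.

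Finally, by construction the stationary triangle distribution of $F$ starting from $\mu_p$ is $\mu_p[\alpha]\mu_p[\beta]\theta_{\alpha\beta}^{\gamma}=\nu(\alpha,\beta,\gamma)$, so $\nu$ is realized. Uniqueness is immediate: any PCA satisfying \eqref{eq-i+ii} whose stationary triangle has law $\nu$ must have transition rates given by the same quotient formula, hence must coincide with $F$. The only mild subtlety I anticipate is keeping the dictionary between the three i.i.d.-marginal assumptions on $\nu$ and the two algebraic identities $(i)$--$(ii)$ straight; beyond that the argument is a routine sum.
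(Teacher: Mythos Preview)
Your argument is correct and is actually a bit cleaner than the paper's. The paper proceeds computationally: it uses pairwise independence to show that $\nu$ is determined by $p$ and a single free parameter $q_1=\nu(0,1,1)$, writes out all eight values of $\nu$ explicitly, then computes $\theta_{ij}=\P(Y_0=1\mid X_0=i,X_1=j)$ one by one and observes that these match \eqref{eq-i+ii} with $s=q_1/(p(1-p))$. You bypass this parametrization entirely: your quotient formula $\theta_{\alpha\beta}^{\gamma}=\nu(\alpha,\beta,\gamma)/(\mu_p[\alpha]\mu_p[\beta])$ is of course the same conditional probability, but instead of computing it explicitly you verify conditions $(i)$ and $(ii)$ by summing out one coordinate and invoking the corresponding i.i.d.\ pairwise marginal. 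This makes the logical role of each of the three marginal hypotheses transparent (one gives well-definedness of $f$, the other two give $(i)$ and $(ii)$), and it would generalize verbatim to larger alphabets, whereas the paper's explicit parametrization is specific to $\A=\{0,1\}$. The paper's approach, on the other hand, has the advantage of exhibiting the parameter $s$ concretely in terms of $\nu$.
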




\begin{proof} 
Consider $(X_0,X_1,Y_0)\sim \nu$. Assume that the common law of
$X_0,X_1,$ and $Y_0$ is $\ber_p$. 
By the pairwise independence, we have:
\begin{eqnarray*}
\P(X_0=1,X_1=0,Y_0=0)& = & \P(X_1=0,Y_0=0) - \P(X_0=0,X_1=0,Y_0=0)\\
& = & (1-p)^2 -  \P(X_0=0,X_1=0,Y_0=0) \:.
\end{eqnarray*}
We obtain:
\begin{eqnarray*}
&&\P(X_0=1,X_1=0,Y_0=0)=\P(X_0=0,X_1=1,Y_0=0)=\P(X_0=0,X_1=0,Y_0=1) \\
&&\P(X_0=0,X_1=1,Y_0=1)=\P(X_0=1,X_1=0,Y_0=1)=\P(X_0=1,X_1=1,Y_0=0)\:.
\end{eqnarray*}
Set $q_0 = \P(X_0=1,X_1=0,Y_0=0)$ and $q_1=
\P(X_0=0,X_1=1,Y_0=1)$. We have:
\begin{equation*}
\P(X_0=0,X_1=0,Y_0=0) =   (1-p)^2- q_0, \qquad 
\P(X_0=1,X_1=1,Y_0=1)  =    p^2 - q_1\:.
\end{equation*}
Furthermore:
\begin{equation*}
q_0+q_1= \P(X_0=0,X_1=0,Y_0=1) + \P(X_0=1,X_1=0,Y_0=1) =
\P(X_1=0,Y_0=1) = p(1-p) \:.
\end{equation*}
Using the above, and expressing everything as a function of $p$ and $q_1$, we get:
\begin{eqnarray*}
\P(Y_0=1\mid X_0=0,X_1=0) & = & (p(1-p)-q_1)/(1-p)^2 \\
\P(Y_0=1\mid X_0=0,X_1=1) & = & q_1/(p(1-p)) \\
\P(Y_0=1\mid X_0=1,X_1=0) & = & q_1/(p(1-p)) \\ 
\P(Y_0=1\mid X_0=1,X_1=1) & = & 1-q_1/p^2 \:.
\end{eqnarray*}
By setting $\theta_{ij} = \P(Y_0=1\mid X_0=i,X_1=j)$ and $s=q_1/(p(1-p))$, we recover
exactly \eref{eq-i+ii}. 
\end{proof}

\begin{prop}\label{pr-triangle} Consider a PCA satisfying \eref{eq-i+ii} with $s\neq p$. 
The correlations between  three random variables that form an equilateral
  triangle pointing up decrease exponentially in
  function of the size of the triangle. 
\end{prop}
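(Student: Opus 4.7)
The three vertices of the triangle, call them $A, B, C$, are pairwise independent $\mathrm{Bernoulli}(p)$ variables by Proposition \ref{pr-all}, so their joint law on $\{0,1\}^3$ is entirely determined by the single parameter
\[
T_m := \mathbb{E}[(A - p)(B - p)(C - p)].
\]
Every correlation among $A, B, C$ is an affine function of $T_m$, so exponential decay of all such correlations reduces to showing $|T_m| \leq C \rho^m$ for some $\rho \in (0,1)$ depending on $p$ and $s$.

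Place the triangle so that $A = X^0_0$, $B = X^0_m$, $C = X^m_0$. The key algebraic identity, extracted directly from \eref{eq-i+ii}, is
\[
\theta_{ij} - p = r\,(i-p)(j-p), \qquad r := \frac{p-s}{p(1-p)},
\]
which gives the martingale-style relation $\mathbb{E}[X^n_k - p \mid \mathcal{F}_{n-1}] = r\,(X^{n-1}_k - p)(X^{n-1}_{k+1} - p)$. Write $X^n_k - p = r\,(X^{n-1}_k-p)(X^{n-1}_{k+1}-p) + \epsilon^n_k$, where the noises $(\epsilon^n_k)_k$ at fixed time $n$ are conditionally independent given $\mathcal{F}_{n-1}$ with conditional mean zero. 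Applying this once together with the tower property (the term involving $\epsilon^m_0$ drops out since $(X^0_0-p)(X^0_m-p)$ is $\mathcal{F}_{m-1}$-measurable) yields
\[
T_m = r \cdot \mathbb{E}\bigl[(X^0_0-p)(X^0_m-p)(X^{m-1}_0-p)(X^{m-1}_1-p)\bigr].
\]

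The plan is to iterate this reduction. Substituting the recursion into the two time-$(m-1)$ factors and expanding, cross terms carrying a single $\epsilon^{m-1}$ vanish by conditional mean zero, and the double-noise term $\epsilon^{m-1}_0 \epsilon^{m-1}_1$ vanishes by conditional independence of the two noises. The only surviving term is the product of the deterministic pieces, which creates a square $(X^{m-2}_1-p)^2$ at the shared position; the identity $(X-p)^2 = p(1-p) + (1-2p)(X-p)$ linearizes it. Iterating $m$ times, $T_m$ becomes a finite linear combination of moments of i.i.d.\ time-$0$ centered Bernoulli variables, each weighted by a monomial $r^{a}(1-2p)^{b}$ with $a$ growing with $m$; only those terms in which every time-$0$ variable appears with even multiplicity survive the expectation, and each such contribution is of size $O(|r|^{a}|1-2p|^{b})$.

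The main obstacle is to control the combinatorial growth of the expansion: each reduction step can nearly double the range of space-positions involved, so one must show that the cancellations from (i) conditional mean-zero noise and (ii) vanishing of odd single-site $\mu_p$-moments leave only polynomially many surviving terms, so that the geometric factor $|r|^a|1-2p|^b$ dominates. A conceptually cleaner route is to work in the orthonormal basis of $L^2(\mu_p^{\otimes (m+1)})$ given by the products $\chi_S(x) = \prod_{i \in S}(x_i - p)/\sqrt{p(1-p)}$: by orthogonality one has $T_m = p(1-p)\, c^{(m)}_{\{0,m\}}$, where $c^{(m)}_S$ is the Fourier coefficient of the function $\bar h_m(x) := \mathbb{E}[X^m_0 \mid X^0 = x]$ on $\chi_S$, and the algebraic identity above induces a tractable recursion on $(c^{(m)}_S)_S$ from which the geometric decay of $c^{(m)}_{\{0,m\}}$ can be read off. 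The sanity check $p = 1/2$ is instructive: there $(X-p)^2 \equiv 1/4$ so no squares survive, the recursion becomes purely multiplicative in $\beta := 2s-1$, and one obtains an explicit formula $T_m = \beta^{a_m}/8$ with $a_m$ growing at least linearly, exhibiting (super-)exponential decay whenever $|\beta|<1$.
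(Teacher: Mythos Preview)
Your argument has a genuine gap. You correctly isolate the algebraic identity $\theta_{ij}-p=r\,(i-p)(j-p)$ and set up a moment recursion, but the proof stops precisely at the hard step: you write that ``one must show that the cancellations \ldots\ leave only polynomially many surviving terms,'' and you do not show it. The alternative Fourier route is likewise only a sketch; saying the decay ``can be read off'' from a recursion on the coefficients $c^{(m)}_S$ is not a proof, since that recursion couples $c^{(m)}_{\{0,m\}}$ to a growing family of other coefficients and you give no control on the system. As a side remark, your $p=1/2$ sanity check is slightly off: in the $\pm1$ encoding the dynamics read $\hat X^{n}_k=\hat X^{n-1}_k\hat X^{n-1}_{k+1}\hat U^n_k$, which gives $\hat X^0_0\hat X^0_m\hat X^m_0=\prod_{0<k<m}(\hat X^0_k)^{\binom{m}{k}\bmod 2}\cdot V_m$, so $T_m=0$ whenever some $\binom{m}{k}$ with $0<k<m$ is odd, i.e.\ whenever $m$ is not a power of~$2$; the formula $T_m=\beta^{a_m}/8$ therefore cannot hold for all $m$.

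The paper avoids the combinatorial explosion by a renormalization argument. One checks that the sub-lattice field $(X^{2n}_{2k})_{k,n}$ is again the stationary space-time diagram of a PCA in the family \eref{eq-i+ii}, with the same $p$ but a new parameter $s^{(2)}=\phi(s)$; summing over the three hidden variables $X^0_1,X^1_0,X^1_1$ yields the closed form $\phi(s)=p+(s-p)^3/\bigl(p(1-p)\bigr)$. Iterating, $\phi^i(s)-p=\sqrt{p(1-p)}\,\bigl((s-p)/\sqrt{p(1-p)}\bigr)^{3^i}$, giving (super-)exponential decay of the size-$2^i$ triangle correlation, with analogous computations for other sizes. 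This self-similarity argument is a few lines; completing your direct expansion would require exactly the combinatorial bookkeeping you flagged but did not carry out.
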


\begin{proof} Let us consider the random field $(X_{2k}^{2n})_{k,n\in
    \Z}$. Observe that all its random variables are distributed according to $\ber_p$, and that each
  line consists of i.i.d. random variables. 
  Moreover, for any $a<b$, the variables $(X^{2n+2}_{2k})_{a\leq k\leq b}$ are independent conditionally to the variables $(X^{2n}_{2k})_{a\leq k\leq b+1}$.
 Thus, this ``extracted'' random field corresponds to the space-time diagram of a new PCA, having a
  neigborhood of size $2$ and satisfying \eref{eq-i+ii} for the
  same value of $p$. To know its transition rates
  $\theta^{(2)}_{ij}=\P(X_0^2=1 \mid X_0^0=i, X_2^0=j)$, it is 
  enough to compute $\theta^{(2)}_{10}=\theta^{(2)}_{01}$. We 
 denote this value by $\phi(s)$, since it is a function of
  $s=\theta_{01}=\theta_{10}$. 

Summing on all possible values of
  $X_1^0, X^1_0, X^1_1$ (we first consider the case $X_1^0=1$ and then
  the one $X_1^0=0$), we get: 
$$\phi(s)
=p\;[\theta_{01}\theta_{11}\;
\theta_{11}+(1-\theta_{01})\theta_{11}\;
\theta_{01}+\theta_{01}(1-\theta_{11})\;
\theta_{10}+(1-\theta_{01})(1-\theta_{11})\; \theta_{00}]$$ 
$$+(1-p)\;[\theta_{00}\theta_{01}\;
\theta_{11}+(1-\theta_{00})\theta_{01}\;
\theta_{01}+\theta_{00}(1-\theta_{01})\;
\theta_{10}+(1-\theta_{00})(1-\theta_{01})\; \theta_{00}].$$ 
Replacing the coefficients $\theta_{ij}$ by their expression in function of $p$ and $s$ and simplifying the result, we obtain:
$$\phi(s)=p+{(s-p)^3\over p(1-p)}.$$

We proceed similarly for the random field $(X_{2^i k}^{2^i n})_{k,n\in
    \Z}$.
The coefficient $\theta^{(2^i)}_{01}=\P(X_0^{2^i}=1\mid X_0^0=0, X_{2^i}^0=1)$ is equal to $\phi^i(s)$, which satisfies:
$$\phi^i(s)-p={(s-p)^{3^i}\over (p(1-p))^{{3^{i}-1\over
      2}}}=\sqrt{p(1-p)}\Big({s-p\over \sqrt{p(1-p)}}\Big)^{3^i}\:.$$

Similar computations can be performed for equilateral triangles
pointing up of other sizes. 
The decay of correlation for equilateral triangles pointing up is exponential in function of their size. \end{proof}


Next lemma will allow us to characterize completely the triples
of random variables that are not independent. 

\begin{lemm}\label{pt_indep}
Consider a PCA satisfying \eref{eq-i+ii}. The variable $X_0^0$ is independent of $(X_k^n)_{k\in\Z,n\in\N\setminus\{0\}}$.
\end{lemm}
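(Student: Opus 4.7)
The plan is to prove by induction on $N \geq 1$ that $X_0^0$ is jointly independent of the finite slab $(X_k^n)_{1 \leq n \leq N,\, k \in \Z}$; the full lemma then follows since any finite subset of $(X_k^n)_{k\in\Z,\, n\geq 1}$ lies in some such slab.

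The inductive step $N-1 \mapsto N$ is immediate from the Markov property of the PCA viewed as a Markov chain on $\X$: conditionally on the row $X^{N-1}$, the row $X^N$ is independent of $(X_0^0, X^1, \ldots, X^{N-2})$. Factoring $\P(X_0^0, X^1, \ldots, X^N) = \P(X_0^0, X^1, \ldots, X^{N-1}) \cdot \P(X^N\mid X^{N-1})$ and applying the induction hypothesis to the first factor yields joint independence of $X_0^0$ with $(X^n)_{1 \leq n \leq N}$.

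The substantive part is the base case $N = 1$, which I would settle by a direct computation in the spirit of Lemmas \ref{lem1} and \ref{lem2}. First, I would rewrite conditions (i) and (ii) as the two stochasticity identities $\sum_y \mu_p[y]\,\theta_{xy}^z = \mu_p[z]$ (for all $x,z$) and $\sum_x \mu_p[x]\,\theta_{xy}^z = \mu_p[z]$ (for all $y,z$), respectively. Then, for any $a' \leq 0 \leq b'$ and any values $a, (y_k)_{a' \leq k \leq b'}$, I would expand $\P(X_0^0 = a,\,(X_k^1)_{a' \leq k \leq b'} = (y_k))$ as a sum over the row-$0$ configurations $(x_k)_{a' \leq k \leq b'+1}$ subject to $x_0 = a$, weighted by the Bernoulli factors $\prod_{k \neq 0} \mu_p[x_k]$ and the transition kernel $\prod_{k=a'}^{b'}\theta_{x_k x_{k+1}}^{y_k}$. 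The constraint $x_0 = a$ decouples the sum into a right block over $x_1, \ldots, x_{b'+1}$ and a left block over $x_{a'}, \ldots, x_{-1}$. Summing the right block successively from $x_{b'+1}$ inward via identity (i) collapses it to $\prod_{k=0}^{b'}\mu_p[y_k]$, and summing the left block successively from $x_{a'}$ inward via identity (ii) collapses it to $\prod_{k=a'}^{-1}\mu_p[y_k]$. Multiplying the pieces together gives $\mu_p[a]\prod_{k=a'}^{b'}\mu_p[y_k]$, which is the desired joint-independence identity.

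I expect this base case to be the main obstacle, and specifically the necessity of both conditions (i) and (ii): Lemma \ref{lem1} applied under (i) already gives joint independence with the right half-row $(X_k^1)_{k\geq 0}$, and a symmetric use of (ii) gives it for the left half-row, but neither condition alone allows one to telescope the sum through the fixed value $x_0 = a$ from both sides. It is precisely the conjunction of (i) and (ii) that enables the bidirectional telescoping and hence joint independence with the entire row at time $1$.
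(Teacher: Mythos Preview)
Your proposal is correct and is essentially the paper's argument written out explicitly. The paper's proof reduces to the base case just as you do, then observes that $(Y_k)_{k\geq 0}$ and $(Y_k)_{k<0}$ are conditionally independent given $X_0$ and invokes Lemma~\ref{lem1} (condition~$(i)$) together with its mirror under condition~$(ii)$; your direct bidirectional telescoping is exactly the same computation unfolded, and indeed coincides with the proof the paper gives for the general-alphabet version (Lemma~\ref{pt_indep_ger}).
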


\begin{proof} Set $X=X^0$ and $Y=X^1$. 
It is sufficient to prove that  $X_0$ is independent of
  $(Y_k)_{k\in\Z}$. But
  $(Y_k)_{k\geq 0}$ and $(Y_k)_{k<0}$ are independent conditionally to
  $X_0$, so that we can conclude with Lemma \ref{lem1} and its
  analog for condition {\it (ii)}.\end{proof} 

\begin{prop} Consider a PCA satisfying \eref{eq-i+ii} with $s\not=
  p$. Three random variables of the stationary space-time diagram are correlated
  if and only if they form an equilateral triangle pointing up.  
\end{prop}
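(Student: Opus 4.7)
I would prove both directions of the equivalence. For ``$\Leftarrow$'' (equilateral triangle pointing up implies correlated), Proposition \ref{pr-triangle} already handles the case of triangles of size $2^i$ via a scale-$2^i$ extraction; I would extend this argument to arbitrary size $m \geq 1$, noting that the extracted field $(X^{mn}_{mk})_{k,n\in\Z}$ is again the space-time diagram of a PCA satisfying \eref{eq-i+ii} with the same $p$ and a new parameter $s_m \neq p$ whenever $s \neq p$.

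The main content is the ``$\Rightarrow$'' direction: a triple $\{A, B, C\}$ not forming an equilateral triangle pointing up is jointly independent. I would first upgrade Lemma \ref{pt_indep} to all three PCA directions by attaching to every site $P = (k, n)$ three ``time coordinates'' $t_u(P) = n$, $t_v(P) = k$, $t_w(P) = -(k+n)$, one per PCA frame; note $t_u + t_v + t_w \equiv 0$. Since the PCA satisfies both conditions of \eref{eq-i+ii}, and the transversal operation swaps those two conditions, the transversal PCAs of Propositions \ref{ACPdir1} and \ref{ACPdir2} also satisfy \eref{eq-i+ii}, so Lemma \ref{pt_indep} applies in each of the three frames. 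This yields: for every site $R$ and every $d \in \{u, v, w\}$, the variable $X_R$ is jointly independent of its strict $d$-future $\{X_P : t_d(P) > t_d(R)\}$.

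Call a pair $(R, d) \in \{A, B, C\} \times \{u, v, w\}$ \emph{separating} if the two points of $\{A, B, C\} \setminus \{R\}$ both lie in the strict $d$-future of $R$. Any separating pair is enough: by the strengthened lemma, $X_R$ is independent of the pair of other variables, and those two variables are independent of each other by Proposition \ref{pr-all}, so the whole triple is jointly independent. It remains to show that no $(R, d)$ is separating iff $\{A, B, C\}$ is an equilateral triangle pointing up. Non-separation translates to: for each $d$, the minimum of $t_d$ over $\{A, B, C\}$ is attained by at least two of the three points. A three-way tie in some direction $d$ would place the three points on a lattice axis, and then in another direction $d'$ the three $t_{d'}$-values would be pairwise distinct, contradicting the tied-minimum condition there; so each direction has exactly a two-way tie. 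Two directions sharing the same tied pair would identify two of the three points, so the three tied pairs are the three distinct $2$-subsets of $\{A, B, C\}$. Setting $A = (0, 0)$ and using the $2\pi/3$-rotation symmetry of the setup to fix the assignment $\{A, B\} \leftrightarrow u$, $\{A, C\} \leftrightarrow v$, $\{B, C\} \leftrightarrow w$, the three tied-minimum equations reduce to $B = (m, 0)$ and $C = (0, m)$ for some $m \geq 1$, which is an equilateral triangle pointing up. The hard part will be this combinatorial classification; the rest is a routine application of Lemma \ref{pt_indep} in the three symmetric frames together with Proposition \ref{pr-all}.
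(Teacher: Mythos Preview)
Your argument for the ``$\Rightarrow$'' direction is essentially the paper's proof, recast in coordinates. Your three ``time coordinates'' $t_u,t_v,t_w$ encode exactly the three edges of the smallest equilateral triangle pointing up that contains the triple, and your ``separating pair'' is precisely the situation in which one of those edges meets exactly one of the three points. The paper argues the combinatorial step geometrically (minimality of the bounding triangle plus a pigeonhole on edge incidences), while you argue it via the minima of three linear functionals summing to zero; both are correct and of comparable length. Your remark that the transversal PCA again satisfies \eref{eq-i+ii} (so that Lemma~\ref{pt_indep} applies in all three frames) is exactly what the paper uses implicitly when it invokes ``rotation of angle $2\pi/3$''.

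For the ``$\Leftarrow$'' direction, your key claim --- that the scale-$m$ extracted field is a PCA satisfying \eref{eq-i+ii} with a parameter $s_m\neq p$ whenever $s\neq p$ --- fails. Take $p=1/2$ and any $s\neq 1/2$: the PCA can be written $Y_0=X_0+X_1+\xi_0 \pmod 2$ with i.i.d.\ noise $\xi\sim\ber_{1-s}$, and iterating gives
\[
X^3_0 \;=\; X^0_0+X^0_1+X^0_2+X^0_3+N \pmod 2
\]
for some noise term $N$ independent of the $X^0_i$. Conditionally on $(X^0_0,X^0_3)$ the term $X^0_1+X^0_2$ is uniform (since $p=1/2$), hence $X^3_0$ is independent of $(X^0_0,X^0_3)$: the size-$3$ equilateral triangle pointing up is jointly independent, i.e.\ $s_3=1/2=p$. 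By Lucas' theorem on the parity of $\binom{m}{j}$, the same happens for every $m$ that is not a power of $2$. The paper's own proof of this direction is only a pointer to Proposition~\ref{pr-triangle}, whose explicit computation treats just the sizes $2^i$ and then asserts that ``similar computations'' handle other sizes; so for $p=1/2$ the statement as written is actually false, and neither your proposal nor the paper's reference closes this gap.
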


\begin{proof} Three
  variables that form an equilateral triangle pointing up are
  correlated, see the proof of Proposition \ref{pr-triangle}. Let us now consider three variables $(Z_1,Z_2,Z_3)$ that
  do not constitute such a triangle. Then, if we consider the smallest
  equilateral triangle pointing up that contains them, there is an edge of that
  triangle that contains exactly one of these variables. By rotation
  of angle $2\pi/3$ or translation of the diagram, one can assume that this
  edge is the horizontal one and that it contains the variable
  $Z_1$, and not the variables $Z_2, Z_3$. 
  Now, using Lemma \ref{pt_indep}, we obtain that $Z_1$
  is independent of $(Z_2,Z_3)$. But since $Z_2$ and $Z_3$ are
  independent, the three variables $(Z_1,Z_2,Z_3)$ are independent. 
\end{proof}

There are subsets of four variables that do not contain equilateral
triangles pointing up and that are correlated. It is the case in
general of $(X_0,X_2,Y_0,Y_1)$. Let us consider for instance the PCA
of Example \ref{ex:triang}. The event $(X_0,X_2,Y_0,Y_1)=(0,1,1,1)$
has probability zero, since whatever the value of $X_1$, the
space-time diagram would have an elementary triangle pointing up with
exactly one zero. 

\begin{center}
\includegraphics[scale=0.75]{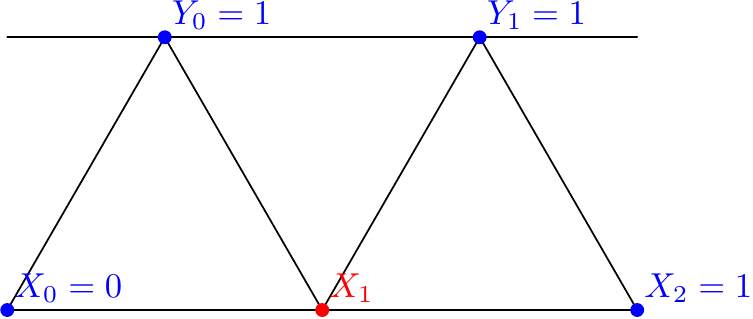}
\end{center}

\subsection{Incremental construction of the random field}

Let us show how to construct
incrementally the stationary space-time diagram of a PCA satisfying
conditions {\it (i)} and {\it (ii)}, using two elementary
operations. 

\medskip

Consider a PCA satisfying {\it (i)} and {\it (ii)} for some $p\in
(0,1)$. 
Let $S\subset\Z^2$ be the finite set of points of the space-time
diagram that has been constructed at some step. Initially
$S=\{(0,0)\}$ and $X_0^0\sim \ber_p$.

\begin{itemize}

\item If $(i,n),(i+1,n)\in S, (i,n+1)\not\in S$, and $\D(i,n+1)\cap S=\emptyset$. Choose
  $X_i^{n+1}$ knowing $(X_i^n , X_{i+1}^n)$  according to
  the law of the PCA.  

If $(i,n),(i,n+1)\in S, (i+1,n)\not\in S$, and if no point of the
dependence cone of $(i+1,n)$ with respect to the transversal
PCA of direction $\vec{v}$ belongs to $S$: choose $X_{i+1}^{n}$
knowing $(X_{i}^{n+1}, X_i^n) $  according to the law of the
transversal PCA  of direction $\vec{v}$. 

If $(i,n+1),(i+1,n)\in S, (i,n)\not\in S$, and if no point of the
dependence cone of $(i,n)$ with respect to the transversal
PCA of direction $\vec{w}$ belongs to $S$: choose $X_{i}^{n}$
knowing $(X_{i+1}^n , X_{i}^{n+1}) $  according to the law of the
transversal PCA  of direction $\vec{w}$. 

\item If $(i,n)\not\in S$, and if $(j,m)\in S$ implies $m<n$: choose
  $X_i^n$ according to $\ber_{p}$ and  independently of the variables
  $X_j^m, (j,m)\in S$. 

If $(i,n)\not\in S$, and if $(j,m)\in S$ implies $j<i$: choose
  $X_i^n$ according to $\ber_{p}$ and  independently of the variables
  $X_j^m, (j,m)\in S$. 

If $(i,n)\not\in S$, and if $(j,m)\in S$ implies $j+m>i+n$: choose
  $X_i^n$ according to $\ber_{p}$ and  independently of the variables
  $X_j^m, (j,m)\in S$. 
\end{itemize}

By applying the above rules in the order illustrated by the figure
below, one  can progressively build the
stationary space-time diagram of the PCA.  
Indeed the rules enlarge $S$ in such a way that, at each step, the
variables of $S$ have the same distribution as the corresponding
finite-dimensional marginal of the stationary space-time diagram. 
This is proved by Lemmas \ref{lem:cone} and \ref{pt_indep}. 

\medskip

On the figure, the labelling of the nodes corresponds to the step at which the
corresponding variable is computed (after the three variables of the grey triangle). An arrow pointing to a variable means that it has been
constructed according to the PCA of the direction of the arrow (first rule). The 
nodes labelled by $\amalg$ are the ones which have been constructed
by independence (second rule).

\begin{center}
\includegraphics[scale=0.80]{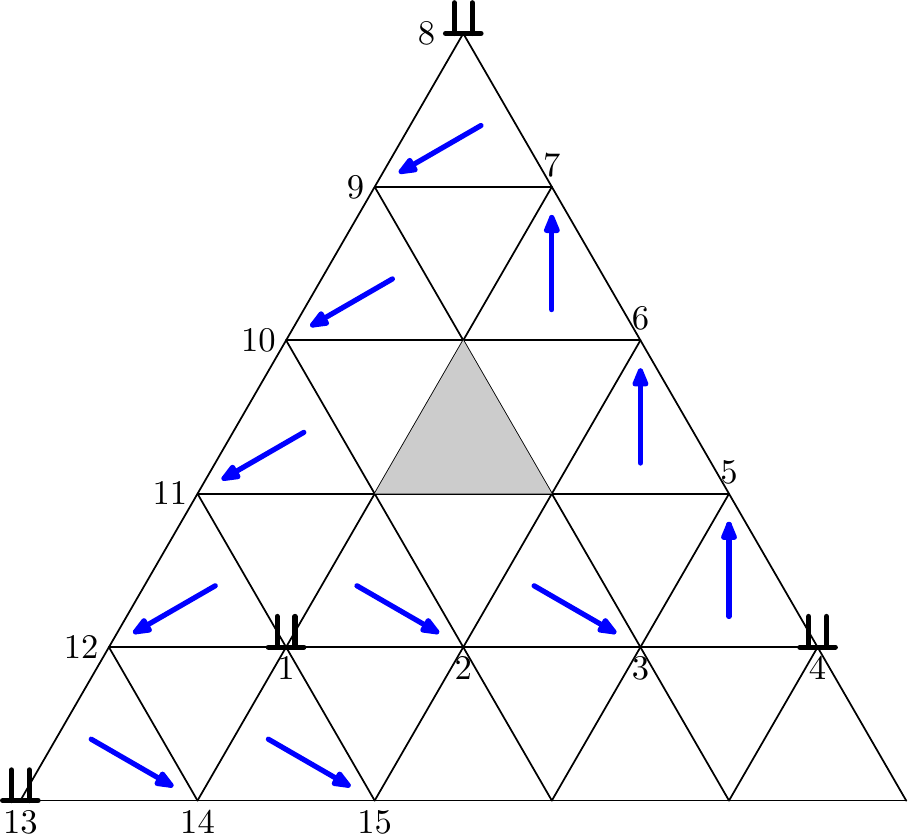}
\end{center}

\section{Extensions}\label{se:extensions}

We consider two types of extensions. First,  PCA with an
alphabet and neighborhood of size 2 but having a Markovian invariant
measure. Second, PCA having a Bernoulli product invariant measure but
with a general alphabet and neighborhood. 

\subsection{Markovian invariant measures}

Markovian measures are a natural extension of Benoulli product
measures. 
In a nutshell, the tools of Section
\ref{cond} can be extended to find conditions for having a Markovian
invariant measure, but the spatial properties presented in Section
\ref{spatial} do not remain.  


\begin{defi}\label{de-mark}
Consider $a,b \in (0,1)$. The {\em Markovian measure} on
$\{0,1\}^{\Z}$ of {\em transition matrix}
$$Q= 
 \left(
  \begin{array}{ c c }
     1-a & a \\
     1-b & b
  \end{array} \right)
$$
is the measure $\nu_Q$ defined on cylinders by:
$$\forall x=x_m\cdots x_n, \qquad  \nu_Q[x]=\pi_{x_m}\;
\prod_{i=m}^{n-1} Q_{x_i,x_{i+1}},$$
where $\pi=(\pi_0, \pi_1)$ is such that $\pi Q=\pi$, $\pi_0+\pi_1=1$, that is,
$\pi_0= (1-b)/(1-b+a)$ and $\pi_1= a/(1-b+a)$. 
\end{defi}

The Markovian measure $\nu_Q$ is space-stationary. 
If $a=b$, then $\nu_Q=\mu_a$, the Bernoulli product measure of parameter
$a$. 

\medskip

Let us fix the PCA, that is, the parameters
$(\theta_{00},\theta_{01},\theta_{10},\theta_{11})$ and assume that
\eref{eq-condition} holds. 
Let us fix the parameters $a$ and $b$ in $(0,1)$ (defining  $Q$ and
$\pi$ as in Definition \ref{de-mark}). We
introduce the 
analogs of the functions defined in \eref{eq-g} and \eref{eq-h}. 

For $\alpha\in\{0,1\}$, define the function:
\begin{equation}\label{eq-g2}
\begin{array}{cccl}
g_{\alpha}: & [0,1] & \longrightarrow &(0,1) \\
&  r & \longmapsto & (1-r)\; (1-a)\;\theta_{00}^{\alpha}+(1-r)\;
a\;\theta_{01}^{\alpha}+r\;
(1-b)\;\theta_{10}^{\alpha}+r\; b\;\theta_{11}^{\alpha} \:.
\end{array}
\end{equation}
In words, $g_{\alpha}(r)$ is the probability that $Y_0=\alpha$ if the
law of $(X_0,X_1)$ is given by $\P(X_0=x_0,X_1=x_1)=r_{x_0}\;
Q_{x_0,x_1}$ with $r_0=1-r$ and $r_1=r$. With condition \eref{eq-condition} on
the parameters, we have $g_{\alpha}(r)\in (0,1)$ for all $r$. Observe
also that: $g_0(r) +
g_1(r) = 1$. 

For 
$\alpha\in\{0,1\}$,  we also define the function:
\begin{equation}\label{eq-h2}
\begin{array}{cccl}
h_{\alpha}: & [0,1] & \longrightarrow & [0,1]\\
 & r & \longmapsto & \bigl[ (1-r)\; a \;
 \theta_{01}^{\alpha}+ r\; b\; \theta_{11}^{\alpha} \bigr] g_{\alpha}(r)^{-1}\:.
\end{array}
\end{equation}
In words, $h_{\alpha}(r)$ is the probability to have $X_1 =1$
conditionally to $Y_0=\alpha$ if
$(X_0,X_1)$ is distributed according to the above law.

\begin{prop}\label{formule_vois2markov} 
Consider the Markovian measure $\nu_Q$ and the PCA $F$ as above. For
$\alpha_0\cdots \alpha_{n-1}\in \A^n$, the probability of the cylinder
$[\alpha_1\cdots \alpha_n]$ under $\nu_Q F$ is given by: 
$$\nu_Q F [\alpha_0 \cdots \alpha_{n-1}] = g_{\alpha_0}(\pi_1) \; \prod_{i=1}^{n-1}
g_{\alpha_i}(h_{\alpha_{i-1}}(h_{\alpha_{i-2}}(\ldots
h_{\alpha_0}(\pi_1)\ldots )))\:.$$ 
\end{prop}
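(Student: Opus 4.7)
The proof follows the same overall scheme as Proposition \ref{formule_vois2}, proceeding by induction on $n$, with the additional input being the Markov structure of $\nu_Q$. Write $X=X^0\sim \nu_Q$ and $Y=X^1$. The base case $n=1$ is immediate: decomposing on the value of $(X_0,X_1)$ and using $\nu_Q[x_0 x_1]=\pi_{x_0}Q_{x_0,x_1}$, we obtain
\[
\nu_Q F[\alpha_0]=\sum_{x_0,x_1\in\A}\pi_{x_0}Q_{x_0,x_1}\theta_{x_0 x_1}^{\alpha_0}=g_{\alpha_0}(\pi_1),
\]
which matches \eref{eq-g2} because $r_0=1-\pi_1$ and $r_1=\pi_1$ play the roles of $1-a,a$ (resp.\ $1-b,b$) weighted accordingly.

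The crucial auxiliary claim, proved by induction on $k$, is the following: \emph{the conditional law of $(X_k,X_{k+1})$ given $Y_0=\alpha_0,\ldots,Y_{k-1}=\alpha_{k-1}$ has the form}
\[
\P(X_k=x,X_{k+1}=y\mid Y_0=\alpha_0,\ldots,Y_{k-1}=\alpha_{k-1})=r^{(k)}_x\, Q_{x,y},
\]
\emph{where} $r^{(k)}_1=h_{\alpha_{k-1}}(h_{\alpha_{k-2}}(\ldots h_{\alpha_0}(\pi_1)\ldots))$ \emph{and} $r^{(k)}_0=1-r^{(k)}_1$. The key point, which uses the Markov property of $\nu_Q$ in an essential way, is that given $X_k$, the variable $X_{k+1}$ is independent of $(X_0,\ldots,X_{k-1})$ (and hence of $(Y_0,\ldots,Y_{k-1})$, which is a function of $(X_0,\ldots,X_k)$), with conditional law $Q_{X_k,\cdot}$. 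So the conditional law of $(X_k,X_{k+1})$ is determined entirely by the conditional law of $X_k$, which I take to be $\ber_{r^{(k)}_1}$.

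Given the claim, the inductive step of the main proposition is a direct computation. Applying $g_{\alpha_k}$ and $h_{\alpha_k}$ at the value $r^{(k)}_1$ exactly reproduces, via \eref{eq-g2} and \eref{eq-h2},
\[
\P(Y_k=\alpha_k\mid Y_0=\alpha_0,\ldots,Y_{k-1}=\alpha_{k-1})=g_{\alpha_k}(r^{(k)}_1),
\]
\[
\P(X_{k+1}=1\mid Y_0=\alpha_0,\ldots,Y_k=\alpha_k)=h_{\alpha_k}(r^{(k)}_1)=r^{(k+1)}_1,
\]
where the second equality comes from a Bayes computation summing over $X_k$ with weights $r^{(k)}_{x}Q_{x,1}\theta_{x1}^{\alpha_k}$ in the numerator and $g_{\alpha_k}(r^{(k)}_1)$ in the denominator. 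This confirms that $r^{(k+1)}_1$ has the expected form, closing the induction on the claim. Multiplying the one-step conditional probabilities via the chain rule $\nu_Q F[\alpha_0\cdots\alpha_{n-1}]=\prod_{k=0}^{n-1}\P(Y_k=\alpha_k\mid Y_0,\ldots,Y_{k-1})$ then yields the announced product formula.

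The main obstacle, compared with the Bernoulli case, is that $X_k$ is not independent of $X_{k-1}$, so one has to carry through the induction the joint conditional law of the pair $(X_k,X_{k+1})$ rather than the single marginal of $X_k$; once this bookkeeping is set up, the Markov property handles the independence from the strict past, and the formulas \eref{eq-g2}--\eref{eq-h2} are precisely tailored so that the recursion closes.
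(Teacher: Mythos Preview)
Your proposal is correct and follows exactly the approach the paper intends: the paper does not give a separate proof of Proposition~\ref{formule_vois2markov}, presenting it simply as the Markovian analog of Proposition~\ref{formule_vois2}, and your adaptation---carrying the conditional law of the pair $(X_k,X_{k+1})$ through the induction and invoking the Markov property of $\nu_Q$ to factor it as $r^{(k)}_x Q_{x,y}$---is precisely what is required. One small wording fix: $(Y_0,\ldots,Y_{k-1})$ is not literally a function of $(X_0,\ldots,X_k)$ but of $(X_0,\ldots,X_k)$ together with the independent PCA update randomness; since that extra randomness is independent of $X_{k+1}$, your conclusion that $X_{k+1}$ is conditionally independent of $(Y_0,\ldots,Y_{k-1})$ given $X_k$ remains valid.
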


Using Proposition \ref{formule_vois2markov}, we obtain sufficient conditions for having a  Markovian
invariant measure. This provides a new proof of
a result mentioned in \cite{toombook} and first published in
\cite{bel}.

\begin{theo}\label{theo:markov}
A PCA has a Markovian invariant measure if its parameters satisfy:
\begin{equation}\label{eq-mbm}
\theta_{00}\;\theta_{11}\;(1-\theta_{01})\;(1-\theta_{10})=\theta_{01}\;\theta_{10}\;(1-\theta_{00})\;(1-\theta_{11})\:,
\end{equation}
and $(\theta_{00},\theta_{01}), (\theta_{10},\theta_{11}) \not\in
  \{(0,0), (1,1)\}$ or $(\theta_{00},\theta_{10}), (\theta_{01},\theta_{11}) \not\in
  \{(0,0), (1,1)\}$.
\end{theo}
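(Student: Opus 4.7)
My plan is to mimic the proof of Theorem \ref{iff}, but using Proposition \ref{formule_vois2markov} in place of Proposition \ref{formule_vois2}. The non-degeneracy assumption is given in one of its two forms; up to space-reversal (which permutes $\theta_{01}$ and $\theta_{10}$, leaving the symmetric identity \eref{eq-mbm} invariant) I can assume the first form so that Proposition \ref{formule_vois2markov} applies as stated.

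First I would unfold what $\nu_Q F = \nu_Q$ means. Writing $\nu_Q[\alpha_0\cdots \alpha_{n-1}] = \pi_{\alpha_0}\prod_{i=0}^{n-2} Q_{\alpha_i,\alpha_{i+1}}$ and comparing with the formula of Proposition \ref{formule_vois2markov}, invariance is equivalent to
\[
g_{\alpha_0}(\pi_1) \prod_{i=1}^{n-1} g_{\alpha_i}\bigl(h_{\alpha_{i-1}} \circ \cdots \circ h_{\alpha_0}(\pi_1)\bigr) \;=\; \pi_{\alpha_0} \prod_{i=0}^{n-2} Q_{\alpha_i,\alpha_{i+1}}
\]
for all $n\geq 1$ and all $\alpha_0,\ldots,\alpha_{n-1}\in\{0,1\}$. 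The cases $n=1$ and $n=2$ extract two basic requirements: the marginal identity $g_\alpha(\pi_1)=\pi_\alpha$, and the one-step identity $g_\beta\bigl(h_\alpha(\pi_1)\bigr)=Q_{\alpha,\beta}$.

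Second, I would try to force the nested iterates of $h$ to collapse so that the full family of equations reduces to the two above. Setting $r_\alpha := h_\alpha(\pi_1)$, I would impose the fixed-point condition $h_\beta(r_\alpha) = r_\beta$ for all $\alpha,\beta\in\{0,1\}$. Under this condition, the iterated expression $h_{\alpha_{i-1}}\circ\cdots\circ h_{\alpha_0}(\pi_1)$ equals $r_{\alpha_{i-1}}$ for every $i\geq 1$, so the left-hand side telescopes into $g_{\alpha_0}(\pi_1)\prod_{i=1}^{n-1} g_{\alpha_i}(r_{\alpha_{i-1}})$, and invariance for all $n$ follows from the marginal and one-step identities alone. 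The problem is thereby reduced to exhibiting $(a,b)\in(0,1)^2$ such that, for the corresponding $\pi$ and $Q$, both the marginal identity and the stabilization $h_\beta(r_\alpha)=r_\beta$ hold.

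Third, I would solve this algebraic system. Using the explicit form \eref{eq-g2}--\eref{eq-h2}, each equation is a rational identity in $(a,b)$ and the $\theta_{ij}$. Clearing denominators and using the marginal identity to eliminate the redundant factors, I expect the four equations $h_\beta(r_\alpha)=r_\beta$ to reduce, after cancellation, to a single compatibility relation on the $\theta_{ij}$ alone, leaving a one-parameter family of solutions $(a,b)$. The candidate relation is precisely \eref{eq-mbm}; one sanity check is that under $a=b$ it must collapse to \eref{eq-bernou} of Theorem \ref{iff}, which is indeed implied by \eref{eq-mbm} together with $a=b$.

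The main obstacle is the symbolic elimination in that third step: the system $\{h_\beta(r_\alpha)=r_\beta\}_{\alpha,\beta}$ together with $g_\alpha(\pi_1)=\pi_\alpha$ is highly coupled, and the nontrivial point is to check that its consistency locus in parameter space is exactly the hypersurface cut out by \eref{eq-mbm}, and that the solution set then contains pairs $(a,b)\in(0,1)^2$ (not just degenerate ones). The guiding simplification is the $(0,1)\leftrightarrow(1,0)$ symmetry between the two equations $h_0(r_0)=h_0(r_1)$ and $h_1(r_0)=h_1(r_1)$, which already forces \eref{eq-mbm} after factoring out $\pi_0\pi_1$.
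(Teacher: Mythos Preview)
Your plan is correct and follows the same overall strategy as the paper: reduce to Proposition~\ref{formule_vois2markov}, impose a condition that collapses the nested compositions of the $h_\alpha$, and solve the resulting algebraic system for $(a,b)$.

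The paper's execution differs in one simplifying observation. Instead of your stabilization condition $h_\beta(r_\alpha)=r_\beta$, the paper asks directly that each $h_\alpha$ be a \emph{constant} function of $r$, say $h_\alpha\equiv c_\alpha$. Since $h_\alpha(r)$ is a M\"obius transformation in $r$, your two equalities $h_\beta(r_0)=h_\beta(r_1)$ (with $r_0\neq r_1$) force constancy anyway, so the two formulations are equivalent; but recognising constancy at the outset makes your ``main obstacle'' disappear. Constancy of $h_1$ is the linear relation $a(1-b)\,\theta_{01}\theta_{10}=(1-a)b\,\theta_{00}\theta_{11}$, constancy of $h_0$ is $a(1-b)\,(1-\theta_{01})(1-\theta_{10})=(1-a)b\,(1-\theta_{00})(1-\theta_{11})$, and eliminating the ratio $a(1-b)/((1-a)b)$ between them gives \eref{eq-mbm} in one line, confirming your expected one-parameter family. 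One slip in your summary: after stabilization you still need the one-step identity $g_\beta(r_\alpha)=Q_{\alpha,\beta}$, not just the marginal identity; in the paper it is this second equation (namely $(1-a)\theta_{00}=b(1-\theta_{11})$) that selects the unique $(a,b)\in(0,1)^2$, and the marginal identity $g_\alpha(\pi_1)=\pi_\alpha$ is then shown to follow from the other two rather than being imposed separately.
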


\begin{proof}
We treat the case $(\theta_{00},\theta_{01}), (\theta_{10},\theta_{11}) \not\in
  \{(0,0), (1,1)\}$ (for which Prop. \ref{formule_vois2markov}
  holds). The case $(\theta_{00},\theta_{10}), (\theta_{01},\theta_{11}) \not\in
  \{(0,0), (1,1)\}$ can be treated by reversing the space-direction. 

Let us assume that the following conditions are satisfied:
\begin{enumerate}
\item for $\alpha\in \{0,1\}$, $g_{\alpha}(\pi_{1})=\pi_{\alpha}$;
\item for $\alpha\in \{0,1\}$, there exists $c_{\alpha}\in [0,1]$ such
  that: $\forall r, \ h_{\alpha}(r)= c_{\alpha}$; 
\item for $\alpha, \beta \in \{0,1\}$, $g_{\beta}(c_{\a})=Q_{\a,\beta}$.
\end{enumerate}
Then, by a direct application of Proposition \ref{formule_vois2markov}, the measure $\nu_Q$ is invariant. When are these conditions fulfilled?


For $\a=1$, condition 2 tells us that there exists $c_{1}\in[0,1]$ such that for any $r\in [0,1]$,
$$(1-r)\; a \; \theta_{01}+ r\; b\; \theta_{11} =
c_1 \big((1-r)\; (1-a)\;\theta_{00}+(1-r)\;
a\;\theta_{01}+r\; (1-b)\;\theta_{10}+r\;
b\;\theta_{11}\big).$$ 

This is the case if and only if:
\begin{equation*}
a\;\theta_{01}=c_1((1-a)\;\theta_{00}+ a\;\theta_{01}), \qquad 
b\;\theta_{11}=c_1((1-b)\;\theta_{10}+ b\;\theta_{11})\:.
\end{equation*}
Thus, condition 2 for $\a=1$ is equivalent to: 
\begin{equation}\label{d1}
a \; (1-b) \;\theta_{01}\;\theta_{10}=(1-a)\;b\;\theta_{00}\;\theta_{11}\:.
\end{equation}
In the same way, condition 2 for $\a=0$ is equivalent to: 
\begin{equation}\label{d2}
a \; (1-b)
\;(1-\theta_{01})\;(1-\theta_{10})=(1-a)\;b\;(1-\theta_{00})\;(1-\theta_{11})\:.
\end{equation}
Eliminating $a$ and $b$ in (\ref{d1}) and (\ref{d2}), we obtain the
relation \eref{eq-mbm} for the parameters of the PCA. 


\medskip

Conversely, let us assume that relation \eref{eq-mbm} holds. We will
prove that there exists $a,b \in (0,1)$ such that the three above conditions are satisfied. 

First observe that \eref{d1} holds if and only if \eref{d2} holds. 
So, we have a first relation to be satisfied by the parameters $a,b
\in (0,1)$ which is \eref{d1}. Under this relation, condition 2 is
satisfied with:
\begin{equation}\label{c0}
c_0={a\;(1-\theta_{01})\over (1-a)\;(1-\theta_{00})+
  a\;(1-\theta_{01})}={b\;(1-\theta_{11})\over (1-b)\;(1-\theta_{10})+
  b\;(1-\theta_{11})}\:,
\end{equation}
and 
\begin{equation}\label{c1}
c_1={a\;\theta_{01}\over
  (1-a)\;\theta_{00}+ a\;\theta_{01}}={b\;\theta_{11}\over
  (1-b)\;\theta_{10}+ b\;\theta_{11}}\:.
\end{equation} 

Now consider condition 3 for $\alpha=\beta =1$. Symplifying using
\eref{c1}, we obtain:
\begin{equation}\label{eqcond3}
g_1(c_1) = Q_{11}= b \ \iff \ (1-a)\;\theta_{00}= b \;(1-\theta_{11})
\:.
\end{equation}
Condition 3 for other values of $\alpha$ and $\beta$ provide 
the same relation after simplification. 

Let us show that if equations \eref{d1} and \eref{eqcond3} are
satisfied, then the PCA also fulfills condition 1. Is is sufficient to
prove that $g_1(\pi_1)=\pi_1$. Expanding both sides of \eref{d2} and
simplifying using \eref{d1}, we obtain: 
\begin{equation}\label{fin}
a(1-b)\; (1-\theta_{01}-\theta_{10})=(1-a)b\; (1-\theta_{00}-\theta_{11})\: .
\end{equation}
Applying the definition \eref{eq-g2}, we have:
$$g_1(\pi_1)={1\over 1-b+a} \;
\Big((1-b)(1-a)\;\theta_{00}+(1-b)a\;\theta_{01}+a(1-b)\;\theta_{10}+ab\;\theta_{11}\Big)\: 
.$$ 
Using \eref{fin}, we can replace $a(1-b)(\theta_{01}+\theta_{10})$ by
$a(1-b)-(1-a)b\; (1-\theta_{00}-\theta_{11})$. With \eref{eqcond3}, we
finally obtain $g_1(\pi_1)=a/(1-b+a)=\pi_1$. 

Now, observe that the system:
\begin{equation}\label{system}
\left\{
\begin{array}{l}
(1-a) \; b \;\theta_{01}\;\theta_{10}=a\;(1-b)\;\theta_{00}\;\theta_{11}\\
(1-a)\;\theta_{00}= b \;(1-\theta_{11})
\end{array}\right.
\end{equation}
has a unique solution $(a,b)\in(0,1)^2$. Let $Q$ be the matrix
associated with $(a,b)$. Since the three above conditions are
satisfied, the Markovian measure $\nu_Q$ is invariant by the PCA. 
\end{proof}

In the Markovian case, unlike the Bernoulli case, 
there is no simple description of the law of other lines in the
stationary space-time diagram. 
Nevertheless, the stationary space-time diagram has a different
but still remakable property: it is {\em time-reversible}, meaning it
has the  same distribution if we
reverse the direction of time. This is proved in \cite{vas}.


\medskip

Bernoulli product measures are special cases of Markovian
measures. Therefore it is natural to ask whether all the cases covered
by Theorem \ref{iff} are retrieved in \eref{eq-mbm}. The
answer is no. Indeed, the measure $\nu_Q$ is a Bernoulli product
measure iff $a=b$. Simplifying in \eref{system} and \eref{eq-mbm},
we obtain:
\[
\bigl[ \theta_{00}=\theta_{01}, \theta_{11}=\theta_{10}\bigr] \quad
\textrm{or} \quad \bigl[ \theta_{00}=\theta_{10},
\theta_{11}=\theta_{01}\bigr] \:.
\]
The corresponding PCA have a neighborhood of size 1. This is far from
exhausting the PCA with a Bernoulli product measure. 

\paragraph{Finite set of cells.} It is also interesting to draw a
parallel between the result of Theorem \ref{theo:markov} and
Proposition 4.6 of Bousquet-M\'elou \cite{BoMe98}. In this last
article, the author studies PCA of alphabet $\A=\{0,1\}$ and
neighborhood $\Neighb=\{0,1\}$, but defined on a finite ring of size
$N$ (periodic boundary conditions: $X_N=X_0$), and proves that
the invariant measure has a Markovian form  if the
parameters satisfy the same relation \eref{eq-mbm} as in the infinite case.
The expression of the measure is then given by:
$$\P(X_0=x_0,X_1=x_1,\ldots,X_{N-1}=x_{N-1})={1\over
  Z}\prod_{i=0}^{N-1}q_{x_i,x_{i+1}},$$ 
where $Z$ is a normalizing constant, and where the coefficients $a$
and $b$ defining the matrix $Q$ are the solution of the same system
(\ref{system}) as in the infinite case. 

For a PCA satisfying condition \eref{eq-mbm}, we have a Markovian
invariant measure both on a
finite ring and on $\Z$. This is not the case for Bernoulli product measures: except when the
actual neighborhood is of size $1$, PCA satisfying the
conditions of Theorem \ref{iff} do not have a product form invariant
measure on finite rings. 

\begin{exam}\rm 
Consider for instance the PCA of transition
function $f(x,y)=(3/4) \; \delta_{x+y \mod 2}+(1/4)\;
\delta_{x+y+1 \mod 2}$ (Example \ref{ex:sum}), on the ring of size
4. Its invariant measure $\mu$ is different from the uniform measure:
\begin{eqnarray*}
\mu(0000) = 573/8192, \ \mu(0001)= 963/16384, \ \mu(0011) = 33/512, \\
\mu(0101) =  69/1024, \ \mu(0111)= 957/16384, \ \mu(1111) =
563/8192 \:.
\end{eqnarray*}
\end{exam}

\subsection{General alphabet and neighborhood}\label{larger}

In this section, the neighborhood is $\Neighb=\{0,\ldots, \ell \}$ and
the alphabet  is $\A=\{0,\ldots,n\}$. For
$p=(p_0,\ldots,p_n)$ such that $p_0+\ldots+p_n=1$, we still denote by
$\mu_p$ the corresponding Bernoulli product measure on $\A^{\Z}$.  

For convenience, we introduce the following notations: $\forall
x_0,\ldots, x_{\ell} \in \A, \ \forall k\in \A$, 
\begin{equation*}
\theta^{k}_{x_0\cdots x_{\ell}}=f(x_0,\ldots, x_{\ell})(k) \:.
\end{equation*}

We define new functions $g_{k}$ and $h_{k}$, which generalize the
ones in \eref{eq-g} and \eref{eq-h}. These new functions $g_{k}$ and $h_{k}$ are not functions of a
single variable, but of probability measures on
$\A^{\ell}$. Assume that:
\begin{equation}\label{eq-condition2}
\forall k\in \A, \ \forall x_0\cdots x_{\ell-1} \in \A^{\ell}, \
\exists i \in \A, \qquad \theta_{ x_0\cdots x_{\ell-1}i}^k >0 \:.
\end{equation}

Let us define:
\begin{eqnarray*}
g_{k}: \M(\A^{\ell}) & \longrightarrow & (0,1), \\
\D & \longmapsto & \hbox{ the probability that } Y_0=k \hbox{ if
} (X_0,\ldots,X_{\ell})\sim \D\otimes \ber_{p} \:,\\
& &\\
h_{k}: \M(\A^{\ell}) & \longrightarrow & \M(\A^{\ell}), \\
 \D & \longmapsto & \hbox{ the distribution of } (X_1,\ldots,X_{\ell}) \hbox{ conditionally to } Y_0=k \\
& & \hbox{ if } (X_0,\ldots,X_{\ell})\sim \D\otimes \ber_{p} \:.
\end{eqnarray*}

We have the following analog of Proposition \ref{formule_vois2}. 


\begin{prop}\label{formule0}  Consider a PCA satisfying
  \eref{eq-condition2}. Consider $p=(p_i)_{i\in \A}$ with $p_i>0$ for all $i$. For $\alpha_0\cdots \alpha_{n-1} \in \A^n$, the probability of the
cylinder $[\alpha_0\cdots \alpha_{n-1}]$ under $\mu_p F$ is given by:  
$$\mu_pF[\alpha_0\cdots\alpha_{n-1}]= g_{\alpha_0}(\ber_{p}^{\otimes \ell+1} ) \; \prod_{i=1}^{n-1}
g_{\alpha_i}(h_{\alpha_{i-1}}(h_{\alpha_{i-2}}(\ldots
h_{\alpha_0}(\ber_{p}^{\otimes \ell})\ldots )))\:.$$ 
\end{prop}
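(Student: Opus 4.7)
The plan is to mimic the proof of Proposition \ref{formule_vois2}, proceeding by induction on $n$ while carrying a strengthened invariant about conditional distributions of the pre-image variables. Precisely, the invariant to be established simultaneously with the cylinder formula is: for every $k \geq 0$, the conditional distribution of $(X_k, \ldots, X_{k+\ell-1})$ given $(Y_0, \ldots, Y_{k-1}) = (\alpha_0, \ldots, \alpha_{k-1})$ equals
$$\D_k := h_{\alpha_{k-1}}\bigl(h_{\alpha_{k-2}}(\ldots h_{\alpha_0}(\ber_p^{\otimes \ell}) \ldots)\bigr),$$
with the convention $\D_0 = \ber_p^{\otimes \ell}$.

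The base case is immediate: since $(X_0, \ldots, X_\ell) \sim \ber_p^{\otimes \ell} \otimes \ber_p$, the definition of $g_{\alpha_0}$ gives $\mu_p F[\alpha_0] = g_{\alpha_0}(\ber_p^{\otimes \ell})$, while the definition of $h_{\alpha_0}$ identifies $\D_1$ with the conditional law of $(X_1, \ldots, X_\ell)$ given $Y_0 = \alpha_0$. For the inductive step, the crucial observation is that $X_{k+\ell}$ is independent of $(Y_0, \ldots, Y_{k-1})$: the one-sided neighborhood $\Neighb = \{0, \ldots, \ell\}$ guarantees that each $Y_j$ with $j \leq k-1$ is a function of $(X_j, \ldots, X_{j+\ell})$ and of the independent randomization driving cell $j$, so $X_{k+\ell}$ never enters. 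Combined with the inductive invariant, this yields that, conditionally on $(Y_0, \ldots, Y_{k-1}) = (\alpha_0, \ldots, \alpha_{k-1})$, the tuple $(X_k, \ldots, X_{k+\ell})$ is distributed as $\D_k \otimes \ber_p$. By definition of $g_{\alpha_k}$ we obtain
$$\P(Y_k = \alpha_k \mid Y_0 = \alpha_0, \ldots, Y_{k-1} = \alpha_{k-1}) = g_{\alpha_k}(\D_k),$$
which, multiplied into the formula at step $k$, yields the formula at step $k+1$. Further conditioning on $Y_k = \alpha_k$ gives by definition of $h_{\alpha_k}$ that the conditional distribution of $(X_{k+1}, \ldots, X_{k+\ell})$ is $h_{\alpha_k}(\D_k) = \D_{k+1}$, closing the induction.

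The main subtlety is the one-step independence argument: why $X_{k+\ell}$ remains $\ber_p$-distributed and independent of $(X_k, \ldots, X_{k+\ell-1})$ after conditioning on the past observations $(Y_0, \ldots, Y_{k-1})$. This rests on the one-sided support of the neighborhood together with the fact that the random noise driving the updates of distinct cells is independent; it is the natural analog, in the first-step direction, of Lemma \ref{lem:cone}. Condition \eref{eq-condition2} plays the ancillary role of guaranteeing $g_{\alpha_k}(\D_k) > 0$ for all tuples $(\alpha_0, \ldots, \alpha_k)$ arising in the iteration, so that the conditionings defining $h_{\alpha_k}(\D_k)$ are always well-posed; this is exactly the general-alphabet analog of \eref{eq-condition} used in Proposition \ref{formule_vois2}.
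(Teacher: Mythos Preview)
Your proposal is correct and follows essentially the same approach as the paper: the paper does not spell out a proof of Proposition~\ref{formule0} but presents it as the direct analog of Proposition~\ref{formule_vois2}, whose proof tracks the conditional law of $X_k$ given $(Y_0,\ldots,Y_{k-1})$ and uses the independence of $X_{k+1}$ from that past; your argument is exactly this, with the single variable $X_k$ replaced by the window $(X_k,\ldots,X_{k+\ell-1})$. Your added remarks on the role of condition~\eref{eq-condition2} and on the independence of $X_{k+\ell}$ from $(X_k,\ldots,X_{k+\ell-1},Y_0,\ldots,Y_{k-1})$ make explicit points that the paper leaves implicit in the $\ell=1$ case.
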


By reversing the space-direction, we get an analog of Proposition \ref{formule0} under the symmetric
condition: $\forall k\in \A,  \forall x_0\cdots x_{\ell-1} \in \A^{\ell}, 
\exists i \in \A, \ \theta_{ i x_0\cdots x_{\ell-1}}^k >0$. 

\medskip

Applying Proposition \ref{formule0}, we otain the following result. 
It already appears in \cite{vas} in a more complicated setting.

\begin{theo}\label{formule} Consider $p=(p_i)_{i\in \A}$ with $p_i>0$
  for all $i$.  The measure $\mu_p$ is an invariant measure of
  the PCA $F$  if one of
  the two following conditions is satisfied:  
\begin{eqnarray}
& \forall x_0,\ldots, x_{\ell-1}\in \A, \forall
k\in\A, &  \sum_{i\in\A} p_i\; \theta^k_{x_0 \cdots x_{\ell-1}i}=p_k  \label{eq-gencond1}\\
 & \forall x_0,\ldots, x_{\ell-1}\in \A, \forall
k\in\A, & \sum_{i\in\A}^n p_i\; \theta^k_{i x_0 \cdots
  x_{\ell-1}}=p_k \:.\label{eq-gencond2}
\end{eqnarray} 
\end{theo}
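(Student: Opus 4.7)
My plan is to mimic the structure of the proof of Theorem \ref{iff}: deduce everything from the explicit product formula of Proposition \ref{formule0}, treating condition \eref{eq-gencond1} directly and then recovering \eref{eq-gencond2} by a space-reversal argument. The first thing I would check is that the applicability hypothesis \eref{eq-condition2} of Proposition \ref{formule0} comes for free in our setting. Indeed, for every $(x_0,\ldots,x_{\ell-1}) \in \A^{\ell}$ and every $k \in \A$, the equality $\sum_{i\in\A} p_i\,\theta^k_{x_0 \cdots x_{\ell-1} i} = p_k > 0$ together with $p_i > 0$ for all $i$ forces at least one $\theta^k_{x_0 \cdots x_{\ell-1} i}$ to be strictly positive, so Proposition \ref{formule0} applies with no additional assumption.

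The key observation is that condition \eref{eq-gencond1} is precisely the statement that the function $g_k$ is constant and equal to $p_k$ on the whole of $\M(\A^{\ell})$. By the very definition of $g_k$, for any $\D \in \M(\A^{\ell})$,
\[
g_k(\D) = \sum_{(x_0,\ldots,x_{\ell-1}) \in \A^{\ell}} \D(x_0 \cdots x_{\ell-1}) \sum_{i \in \A} p_i\, \theta^k_{x_0 \cdots x_{\ell-1} i},
\]
and under \eref{eq-gencond1} the inner sum equals $p_k$ for every $(x_0,\ldots,x_{\ell-1})$. Consequently $g_k(\D) = p_k$ for all $\D$, and $h_{\alpha_0}, h_{\alpha_1}, \ldots$ play no role. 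Substituting into the formula of Proposition \ref{formule0}, every factor collapses to $p_{\alpha_i}$, and we obtain
\[
\mu_p F[\alpha_0 \cdots \alpha_{n-1}] = \prod_{i=0}^{n-1} p_{\alpha_i} = \mu_p[\alpha_0 \cdots \alpha_{n-1}],
\]
so that $\mu_p F = \mu_p$.

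For condition \eref{eq-gencond2}, the plan is to reverse the space direction exactly as in the proof of Theorem \ref{iff}. Define the PCA $\widetilde F$ with transition rates $\widetilde{\theta}^{\,k}_{x_0 \cdots x_{\ell}} = \theta^k_{x_{\ell} \cdots x_0}$; then \eref{eq-gencond2} for $F$ is exactly \eref{eq-gencond1} for $\widetilde F$, so by the previous step $\mu_p \widetilde F = \mu_p$. Since a Bernoulli product measure is invariant under space reversal, reversing the configurations back yields $\mu_p F = \mu_p$. The only mildly delicate point of the whole argument is the free derivation of the positivity hypothesis needed to invoke Proposition \ref{formule0}; once that is noticed, the result is obtained by a one-line substitution into the cylinder formula.
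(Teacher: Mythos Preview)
Your proof is correct and follows essentially the same route as the paper's: show that under \eref{eq-gencond1} the function $g_k$ is constant equal to $p_k$, apply Proposition \ref{formule0}, and then handle \eref{eq-gencond2} by space reversal. The one addition you make---explicitly verifying that the hypothesis \eref{eq-condition2} of Proposition \ref{formule0} is automatically satisfied under \eref{eq-gencond1}---is a worthwhile check that the paper leaves implicit.
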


%
%
%

\begin{proof} Let us assume that $F$ satisfies condition  \eref{eq-gencond1}.
Then, the function $g_k$ is constant. Indeed,
$$g_{k}(\D)=\sum_{i\in\A, x_0\cdots x_{\ell-1}\in\A^{\ell}}\D(x_0,\ldots,x_{\ell-1})\;
p_i \; \theta^k_{x_0 \cdots  x_{\ell-1} i}=p_{k}\:.$$
By Proposition \ref{formule0}, we obtain that $\mu_p F=\mu_p$. 

Now, like in the proof of Theorem \ref{iff}, we can reverse the space
direction and define a new PCA $\widetilde{F}$. The PCA $F$ satisfies
condition  \eref{eq-gencond2} iff the PCA $\widetilde{F}$ satisfies 
condition \eref{eq-gencond1}. Therefore, if $F$ satisfies
condition \eref{eq-gencond2}, then we have $\mu_p\widetilde{F} =\mu_p$, which
implies in turn that $\mu_pF =\mu_p$. 
\end{proof}

As opposed to Theorem \ref{iff}, the conditions in Theorem
\ref{formule} are sufficient but not necessary. To illustrate this
fact, the  simplest examples are provided by PCA that do not depend on
all the elements of their neighborhood. 
Consider for instance the PCA of alphabet $\A=\{0,1\}$ and
neighborhood $\Neighb=\{0,1,2\}$, defined, for some $a,b\in (0,1)$, by:
$\forall u,v \in\A, \theta_{u0v}^1=a$, $\theta_{u1v}^1=b$. This PCA
has a Bernoulli invariant measure, but if $a\not=b$, it satisfies
neither condition \eref{eq-gencond1}, nor condition \eref{eq-gencond2}.

Let us state a result from \cite{vas}, which extends
Proposition \ref{pr-ergodic}, and completes Theorem \ref{formule}. 
(For the relevance of this result, see the discussion following Proposition
\ref{pr-ergodic}.)

\begin{prop}\label{pr-ergodic2}
Consider a positive-rates PCA $F$ satisfying condition
\eref{eq-gencond1} or  \eref{eq-gencond2}, for some $p=(p_i)_{i\in \A}$, $p_i>0$ for all $i$. 
Then $F$ is ergodic, that is, $\mu_p$ is the unique invariant
measure of $F$ and for all initial measure $\mu$, the sequence
$(\mu F^n)_{n\geq 0}$ converges weakly to $\mu_p$. 
\end{prop}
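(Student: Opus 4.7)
The plan is to prove ergodicity via a coupling argument, building on the fact that by Theorem \ref{formule} the measure $\mu_p$ is already known to be invariant. It will suffice to establish weak convergence $\mu F^n \Rightarrow \mu_p$ for every initial measure $\mu$, since uniqueness then follows: any other invariant measure $\nu$ satisfies $\nu = \nu F^n \Rightarrow \mu_p$.

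The first step is to use the positive-rates hypothesis together with $p_k > 0$ for all $k$ to decompose the transition function as a mixture. Setting
\[\epsilon := \min_{u \in \A^{\Neighb},\, k \in \A} \frac{\theta^k_u}{p_k} > 0,\]
each $f(u)$ can be written as $f(u) = \epsilon\,\ber_p + (1-\epsilon)\,\tilde g(u)$ for some probability distribution $\tilde g(u)$ on $\A$. This yields a \emph{refresh} representation of the dynamics: at each cell-time, an independent event of probability $\epsilon$ decides whether that cell is drawn from a fresh $\ber_p$ sample, independently of its neighborhood, or from $\tilde g$ applied to the neighborhood. I would then couple two realizations of the PCA, one starting from $\mu$ and the other from the stationary $\mu_p$, by sharing the refresh flags, the refresh samples, and using a maximal coupling for the non-refresh updates. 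Under this coupling the two chains automatically agree at any cell where a refresh fires, or whose entire neighborhood already agreed at the previous time step.

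The second step is to bound, for a fixed cell $(k,n)$, the probability of disagreement of the coupled chains at that cell. Under the coupling, disagreement at $(k,n)$ forces the existence of a path of non-refreshed cells linking the initial layer to $(k,n)$ within the past dependence cone of $(k,n)$; equivalently, the cone must contain no \emph{refresh barrier} separating $(k,n)$ from time $0$. Once $\P(X^n_k \neq Y^n_k) \to 0$ as $n \to \infty$ is established for each fixed $k$, a union bound on any finite window $K \subset \Z$ gives $|\mu F^n[y] - \mu_p[y]| \to 0$ for every cylinder $[y]$ of base $K$, which is precisely weak convergence of $\mu F^n$ to $\mu_p$ in the product topology.

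The main obstacle is this final percolation estimate: a naive union bound over the $(\ell+1)^n$ paths in the cone only yields $((\ell+1)(1-\epsilon))^n$, which need not decay. The standard remedy is a block-renormalization argument — coarse-grain the space-time lattice into blocks large enough that each block contains a refresh barrier with probability arbitrarily close to $1$, then compare to a supercritical oriented percolation model via a standard stochastic domination result. Alternatively, one can exploit condition \eref{eq-gencond1} more directly: once the rightmost cell of the cone at some time step has been refreshed to $\ber_p$, the PCA update at its left-neighbor produces $\ber_p$ by \eref{eq-gencond1} regardless of the remaining neighbors, so the correct $\mu_p$ marginal propagates leftward and downward through the cone, layer by layer.
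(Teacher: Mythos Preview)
The paper does not supply its own proof of this proposition; it quotes the result from \cite{vas}. So your proposal has to stand on its own.

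Your refresh decomposition and coupling are standard and correct, and the obstacle you identify is real. However, your first proposed remedy --- block renormalization and comparison with oriented percolation --- cannot work as written, because it uses only the positive-rates hypothesis and never invokes \eref{eq-gencond1} or \eref{eq-gencond2}. As the paper itself recalls just after Proposition~\ref{pr-ergodic}, positive rates alone do \emph{not} imply ergodicity: G\'acs's PCA \cite{gacs} is a positive-rates counterexample. Concretely, for small $\epsilon$ the non-refreshed sites have density $1-\epsilon$ close to~$1$, so open (non-refreshed) paths percolate through any space-time block you pick; no block will contain a ``refresh barrier'' with probability close to $1$, and the comparison fails. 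This branch is a genuine gap.

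Your second remedy is the right idea, but it is not yet a proof, and it is really a distributional argument rather than a coupling one. The precise lemma you need (a direct generalisation of the computation in Lemma~\ref{lem1}) is: if at time $t$ the $m$ rightmost cells of the past cone of $(X^n_0,\dots,X^n_{m-1})$ are i.i.d.\ $\ber_p$ and independent of the remaining cells of the cone at time $t$, then the same holds at time $t+1$; this follows from a right-to-left summation using \eref{eq-gencond1}. A simultaneous refresh of those $m$ rightmost cells at some time $t\le n$ therefore forces $(X^n_0,\dots,X^n_{m-1})\sim\ber_p^{\otimes m}$ conditionally on that event, and such a refresh fails to occur by time $n$ only with probability $(1-\epsilon^m)^n\to 0$. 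Once this is spelled out, the coupling scaffolding from your first paragraph becomes unnecessary.
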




%
%
%

Condition \eref{eq-gencond1} implies that the variables $X_0,\ldots,
X_{\ell-1},Y_0$ are mutually independent, since for any
$v\in\{0,1\}^{\ell}$ and $\alpha\in\{0,1\}$, we have $\P((X_0,\ldots,
X_{\ell-1})=v,Y_0=\alpha)=\mu_p[v]\; \sum_{i\in\A}p_i\;
\theta_{vi}^{\alpha}=\mu_p[v]\mu_p[\alpha]$. Similarly, condition \eref{eq-gencond2}
implies that the variables $X_1,\ldots, X_{\ell},Y_0$ are
mutually independent.  

%
%
%
Next lemma is a generalization of Lemma \ref{pt_indep}.

\begin{lemm}\label{pt_indep_ger}
Under conditions \eref{eq-gencond1} and \eref{eq-gencond2}, the variable $X_0^0$ is independent of $(X_k^n)_{k\in\Z,n\in\N\setminus\{0\}}$.
\end{lemm}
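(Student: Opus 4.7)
The plan is to reduce the statement to showing that $X_0^0$ is independent of the single spatial row $X^1=(X_k^1)_{k\in\Z}$, and then prove this by an explicit finite-dimensional computation using both conditions \eref{eq-gencond1} and \eref{eq-gencond2}. For the reduction, Lemma \ref{lem:cone} applied to $S=\{(k,n):n\geq 2\}$ yields that $(X_k^n)_{k\in\Z,\,n\geq 2}$ is conditionally independent of $X^0$ given $X^1$, so $X_0^0\perp X^1$ will imply $X_0^0\perp (X_k^n)_{k\in\Z,\,n\geq 1}$ by a standard conditioning argument.

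To prove $X_0^0\perp X^1$, it suffices to check that, for arbitrary integers $k_{\min}\leq 0\leq k_{\max}$ and arbitrary letters $a,b_{k_{\min}},\dots,b_{k_{\max}}\in\A$,
\[
\P\bigl(X_0^0=a,\ X_k^1=b_k\text{ for } k_{\min}\leq k\leq k_{\max}\bigr)=p_a\prod_{k=k_{\min}}^{k_{\max}}p_{b_k}.
\]
Expanding via the PCA transition rule and the product form of $\mu_p$, the left-hand side becomes
\[
\sum_{(x_j)}\mathbf{1}[x_0=a]\,\prod_{j=k_{\min}}^{k_{\max}+\ell}p_{x_j}\,\prod_{k=k_{\min}}^{k_{\max}}\theta_{x_kx_{k+1}\cdots x_{k+\ell}}^{b_k},
\]
where the sum is over all $x_j\in\A$ for $k_{\min}\leq j\leq k_{\max}+\ell$ (with $x_0$ forced to $a$ by the indicator).

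The main computation is a double peeling from the ends. On the right, eliminate $x_{k_{\max}+\ell},x_{k_{\max}+\ell-1},\dots,x_\ell$ in that order: once the factors to the right have been absorbed, each such variable appears in exactly one remaining $\theta$-factor, in its last index position, and condition \eref{eq-gencond1} collapses that factor to $p_{b_k}$ for $k=k_{\max},\dots,0$. Symmetrically, on the left, eliminate $x_{k_{\min}},x_{k_{\min}+1},\dots,x_{-1}$ in that order, with condition \eref{eq-gencond2} producing $p_{b_k}$ for $k=k_{\min},\dots,-1$ (each variable now sitting in the first index position of its unique remaining $\theta$-factor). What remains is $\mathbf{1}[x_0=a]$ times $\prod_{j=0}^{\ell-1}p_{x_j}$, summed over the free $x_1,\dots,x_{\ell-1}$, which equals $p_a$. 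The total is $p_a\prod_k p_{b_k}$, as required. The corner cases $k_{\min}>0$ or $k_{\max}<0$ are handled by a single-sided peel using only one of the two conditions.

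The main subtlety is the bookkeeping for the peeling order: one must verify that at each stage the variable being summed appears in precisely one remaining $\theta$-factor, in the position matching either \eref{eq-gencond1} or \eref{eq-gencond2}. This is what forces the peeling to proceed from the two extreme ends inward, and it is also what makes both conditions genuinely necessary—one for each side—whenever the conditioning window contains both positive and negative indices.
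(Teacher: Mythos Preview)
Your proposal is correct and follows essentially the same route as the paper: reduce to showing $X_0^0\perp X^1$ via the time-Markov property, then expand the joint probability over the row $X^0$ and peel from both ends using \eref{eq-gencond1} on the right and \eref{eq-gencond2} on the left. The paper performs the two peels in the opposite order (left first, then right) and, incidentally, appears to have the labels \eref{eq-gencond1}/\eref{eq-gencond2} swapped in its description of which side uses which condition; your assignment is the correct one.
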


\begin{proof} Set $X=X^0$ and $Y=X^1$. Like in Lemma \ref{pt_indep}, it is sufficient to prove
  that  $X_0$ is independent of $Y=(Y_k)_{k\in\Z}$. Let us fix some
  $a,b\in\Z, (a<0<b)$, and prove that $X_0$ is independent of
  $(Y_{a},Y_{a+1},\ldots,Y_b)$. We have: 
$$S=\P\bigl(X_0=x_0,(Y_i)_{a\leq i\leq b}=(y_i)_{a\leq i\leq b}\bigr)$$
$$=\sum_{\substack{x_i\in\A \\ i\in\{a,a+1,\ldots,b+\ell\}\setminus\{0\}}}  \P\bigl((X_i)_{a\leq i\leq b+\ell}=(x_i)_{a\leq i\leq b+\ell},(Y_i)_{a\leq i\leq b}=(y_i)_{a\leq i\leq b}\bigr)\:.$$
 Furthermore
 $$\P\bigl((X_i)_{a\leq i\leq b+\ell}=(x_i)_{a\leq i\leq b+\ell},(Y_i)_{a\leq i\leq b}=(y_i)_{a\leq i\leq b}\bigr)$$
$$\qquad =\mu_p[x_0] 
\prod_{i=a}^{-1}\mu_p[x_i]\; \theta_{x_i\cdots x_{i+\ell}}^{y_{i}}
\prod_{j=\ell}^{b+\ell}\mu_p[x_{j}]\; \theta_{x_{j-\ell}\cdots x_{j}}^{y_{j-\ell}}
\prod_{k=1}^{\ell-1}\mu_p[x_k] \:.
$$
If we compute the sum $S$ in the order: $x_a,\ldots,x_{-1}$ first
(simplifications using condition \eref{eq-gencond1}) then
$x_{b+\ell},x_{b+\ell-1},\ldots,x_{\ell}$ (simplifications using
condition \eref{eq-gencond2}), and finally $x_1,\ldots,x_{\ell-1}$, we obtain eventually:
$S=\mu_p[x_0]\, \prod_{i=a}^b\mu_p[y_i]$. 
\end{proof} 

\begin{cor} If both conditions \eref{eq-gencond1} and \eref{eq-gencond2} are
  satisfied, then every line of the stationary space-time diagram consists of
  i.i.d. random variables. In particular, any two different random variables are independent. 
\end{cor}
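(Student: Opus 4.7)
The strategy is to show that every consecutive segment along a line is mutually independent with identical $\ber_p$ marginals, and then to observe that (i) any finite subset of a line is contained in a consecutive segment so the i.i.d. property passes to subsets, and (ii) any two distinct points in $\Z\times\Z$ lie on some line, so the ``in particular'' clause follows from the first assertion.

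For horizontal lines the claim is immediate: each row of the stationary space-time diagram has law $\mu_p$, which is a Bernoulli product measure. For any non-horizontal line, fix a primitive direction vector $\vec{v}=(\Delta k,\Delta n)$ and, replacing $\vec{v}$ by $-\vec{v}$ if necessary, assume $\Delta n\geq 1$. By the space-time stationarity of the random field on $\Z\times\Z$ (which holds because $\mu_p$ is invariant under $F$ and spatially translation-invariant), it suffices to show that for every $m\geq 0$ the tuple
\[
T_m \;=\; (X^0_0,\,X^{\Delta n}_{\Delta k},\,X^{2\Delta n}_{2\Delta k},\,\ldots,\,X^{m\Delta n}_{m\Delta k})
\]
is mutually independent with each marginal equal to $\ber_p$.

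I would prove this by induction on $m$. The base case $m=0$ is trivial. For the inductive step, note that by space-time stationarity the tail tuple $(X^{\Delta n}_{\Delta k},\ldots,X^{m\Delta n}_{m\Delta k})$ has the same joint law as $(X^0_0,\ldots,X^{(m-1)\Delta n}_{(m-1)\Delta k})=T_{m-1}$, so by the induction hypothesis it is mutually independent with $\ber_p$ marginals. On the other hand, every entry of this tail tuple sits at time $\geq \Delta n\geq 1$, so Lemma \ref{pt_indep_ger} (which asserts that $X^0_0$ is independent of the entire $\sigma$-algebra generated by $(X^n_k)_{k\in\Z,\ n\geq 1}$) implies $X^0_0$ is independent of the tail. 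Combining these two facts gives the mutual independence of $T_m$.

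The main conceptual point to be careful about is that Lemma \ref{pt_indep_ger} really must give independence from the \emph{joint} tail, not merely pairwise independence, since the inductive step invokes it on a whole tuple at once; fortunately that is exactly what the lemma delivers, since independence of a variable from a $\sigma$-algebra is equivalent to independence from every random vector measurable with respect to it. Everything else is routine, using the elementary fact that a tuple is mutually independent as soon as its first coordinate is independent of the remaining coordinates and the remaining coordinates are themselves mutually independent.
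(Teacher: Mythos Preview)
Your proof is correct and matches the paper's intended approach: the corollary is stated without proof immediately after Lemma~\ref{pt_indep_ger}, and the derivation you give---separate off the horizontal case, then for any other direction use space-time stationarity together with the lemma to peel off the bottom point by induction---is exactly the argument that this placement is meant to suggest. Your closing remarks about why the lemma delivers independence from the whole tail tuple, and why any two distinct lattice points lie on a common line, are the right checks and nothing is missing.
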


If the neighborhood is $\Neighb=\{0,1\}$, the spatial properties of
Section \ref{spatial} remain for a general alphabet (existence of
transversal PCA, properties of triangles,...). For other neighborhoods, there is no natural transversal PCA. 

\section{Cellular automata}

A {\em cellular automaton} (CA) is a PCA in which the transition
function $f$ is such that, for all $x\in \A^{\Neighb}$, the
probability measure $f(x)$ is concentrated on a single letter of the
alphabet. 
Thus, the transition function of a CA can be described by a
mapping $f:\A^{\Neighb}\longrightarrow \A$, and the CA can be viewed
as a deterministic mapping $F:\A^{\Z}\longrightarrow \A^{\Z}$. 

Cellular automata are classical and relevant mathematical objects:
they are precisely the mappings from $\A^{\Z}$ to $\A^{\Z}$ which are continuous (with respect to
the product topology) and commute with the shift, see
\cite{hedl}. 

\subsection{Known results}\label{sse-known}

\begin{defi}\label{de-perm}
A cellular automaton of transition function $f:\A^\Neighb \longrightarrow
\A$, where the neighborhood is of the form $\Neighb = \{\ell, \dots,
r-1,r\}$ for some $\ell<r$, is {\rm left-permutative} (resp. {\rm
  right-permutative}) if, for all
$w=w_{\ell}\cdots w_{r-1}\in \A^{r-\ell}$, the mapping from $\A$ to $\A$
defined by: $a \longmapsto f(aw)$ (resp. $a \longmapsto f(wa)$), is bijective.  A CA is {\rm
  permutative} if it is either left or
right-permutative. 
\end{defi}

Let $F:\A^{\Z}\longrightarrow \A^{\Z}$ be a permutative CA. The
existence of the bijections, see Definition \ref{de-perm}, has two direct
consequences:
$(i)$ $F$ is surjective; $(ii)$ the uniform measure is invariant: $\mu_{1/2}F=\mu_{1/2}$.
In fact, these last two properties are equivalent. 

\begin{prop}[Hedlund~\cite{hedl}]\label{pr-hedl}
Let $F$ be a cellular automaton. We have:
\[
F \mbox{ is surjective } \ \iff \ \mu_{1/2} F = \mu_{1/2} \:.
\]
\end{prop}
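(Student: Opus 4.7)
The plan is to translate both directions into a statement about the finite block map and then invoke Hedlund's balance theorem for the non-trivial direction. Write the neighborhood as $\Neighb = \{0, 1, \ldots, r\}$ without loss of generality, and define the \emph{block map} $f_n : \{0,1\}^{n+r} \to \{0,1\}^n$ by $(f_n u)_k = f(u_k, u_{k+1}, \ldots, u_{k+r})$. Since $F^{-1}[y]$ is the disjoint union of the $(n+r)$-cylinders $[u]$ over $u \in f_n^{-1}(y)$,
$$
\mu_{1/2} F[y] \;=\; 2^{-(n+r)}\,\bigl|f_n^{-1}(y)\bigr|,
$$
so the identity $\mu_{1/2} F = \mu_{1/2}$ is equivalent to the \emph{balance condition} $|f_n^{-1}(y)| = 2^r$ for every $n \geq 1$ and every $y \in \{0,1\}^n$.

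For the direction $\mu_{1/2}F = \mu_{1/2} \Longrightarrow F$ surjective, the displayed formula gives $\mu_{1/2}(F^{-1}[y]) = 2^{-|y|} > 0$ for every cylinder, so $F^{-1}[y]$ is non-empty and $F(\X)$ meets every cylinder. Hence the image is dense; being the continuous image of the compact space $\X$, it is also closed, and a dense closed subset of $\X$ is the whole space. So $F$ is surjective.

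For the converse, assume $F$ is surjective. Surjectivity transfers to each block map: if some $y_0 \in \{0,1\}^n$ had $f_n^{-1}(y_0) = \emptyset$, then no configuration could have its $F$-image start with $y_0$. Combined with $\sum_y |f_n^{-1}(y)| = 2^{n+r}$, this yields $|f_n^{-1}(y)| \geq 1$ with average exactly $2^r$. The task is to promote this to equality everywhere, which is the content of Hedlund's balance theorem~\cite{hedl}. The route I would follow is via \emph{mutually erasable patterns}: show (i) that if some $f_m$ fails to be balanced, then there exist two distinct words $u \neq v$ in $\{0,1\}^{m+r}$ which agree on their first and last $r$ letters and satisfy $f_m(u) = f_m(v)$; and (ii) that the existence of such a pair lets one substitute $v$ for $u$ inside arbitrarily many disjoint positions in longer words without altering the $f_N$-image, producing enough collisions to force the image of $f_N$ to be a strict subset of $\{0,1\}^N$ for large $N$, contradicting the surjectivity of $F$.

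The main obstacle is step (ii): a single mutually erasable pair only collapses preimages in blocks of size $O(|\A|^{2r})$, which is negligible against the $2^N$ elements of the codomain, so one must iterate by placing the pair at linearly many disjoint positions inside a word of length $N$ and argue, via a counting or sub-exponential-growth estimate, that the image of $f_N$ grows strictly slower than $2^N$. Once balance is established, the conclusion $\mu_{1/2}F[y] = 2^r/2^{n+r} = 2^{-n} = \mu_{1/2}[y]$ is immediate.
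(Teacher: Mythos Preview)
The paper does not supply a proof of this proposition: it is stated with attribution to Hedlund~\cite{hedl} and then used as a black box. So there is no ``paper's own proof'' to compare against; your attempt already goes further than the text does.

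That said, your outline is exactly the classical route. The reduction of $\mu_{1/2}F=\mu_{1/2}$ to the balance condition $|f_n^{-1}(y)|=2^r$ is clean and correct, and your argument for the easy direction (balance $\Rightarrow$ dense image $\Rightarrow$ surjective, via compactness) is complete. For the hard direction you correctly identify the two ingredients---produce a diamond (mutually erasable pair) from a failure of balance, then amplify it to kill surjectivity---and you are honest that step~(ii) is where the real work sits. One small wrinkle in step~(i): a single unbalanced $f_m$ gives, by pigeonhole on prefixes, two preimages sharing their first $r$ letters, but to force agreement on \emph{both} ends you typically need to pass to a longer block first (so the diamond need not live in $\{0,1\}^{m+r}$ for the same $m$). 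With that adjustment your sketch matches Hedlund's argument; as written it is a faithful outline rather than a finished proof, which is all the paper itself asks of this result.
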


There exist surjective CA which are non permutative. 
Consider, for instance, the mapping $F_0:\{0,1\}^\Z\longrightarrow \{0,1\}^\Z$, defined
as follows. Set $A=10010$ and
$B=11000$. Observe that the two patterns $A$ and $B$ do not
overlap. From a
configuration $u\in \{0,1\}^\Z$, we get its image $F_0(u)$ by changing each
occurence of $A$ into $B$, resp. of $B$ into $A$. Clearly, the mapping $F_0$ can
be defined as a cellular automaton with neighborhood
$\Neighb=\{-4,,\dots, 0 , \dots, 4\}$.  Also, $F$ is surjective but
not permutative. 

\medskip

Let us present a recent result which refines
Proposition \ref{pr-hedl}. Given a finite and non-empty word $u\in \A^+$, let
$u^{\Z}=\cdots uuu\cdots  \in \A^{\Z}$ be a periodic bi-infinite word
of period $u$ (the starting position is indifferent). If $F:\A^\Z\longrightarrow \A^\Z$ is a CA, then
$F(u^\Z)=v^\Z$ for some word $v$ with $|v|=|u|$. For simplicity, we
write $v=F(u)$.

\begin{theo}[Kari-Taati~\cite{KaTa}]\label{th-kata}
Consider a CA $F$ on the alphabet $\A$. The Bernoulli product measure
$\mu_p$, $p=(p_i)_{i\in \A}$, $p_i>0$ for all $i$, is invariant for $F$ if and only if:
\[
(i) \ F \mbox{ is surjective } \qquad \hbox{ and } \qquad(ii) \ \forall u \in \A^+, \
\sum_{i \in \A} |u|_i \log(p_i) = \sum_{i\in \A} |F(u)|_i \log(p_i)
\:.
\]
\end{theo}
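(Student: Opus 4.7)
Introduce $\phi(u) := \sum_i |u|_i \log p_i$, so that $\mu_p[u] = e^{\phi(u)}$ for any finite word $u$; condition $(ii)$ then reads $\phi(u) = \phi(F(u))$ for every $u \in \A^+$, where $F(u)$ denotes the length-$|u|$ image under periodic extension. Both implications rest on the preimage formula $\mu_p F[w] = \sum_{v \in F^{-1}[w]} \mu_p[v]$ (the sum running over $v \in \A^{|w|+\ell}$ with $\Neighb = \{0,\ldots,\ell\}$), combined with Hedlund's balance theorem: $F$ is surjective iff every word of length $m$ has exactly $|\A|^\ell$ preimages in $\A^{m+\ell}$.

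\textbf{Forward direction} ($\mu_p F = \mu_p \Rightarrow (i)$ and $(ii)$). Condition $(i)$ follows from a support argument: $F(\A^\Z)$ is closed (continuous image of a compact) and contains $\mathrm{supp}(\mu_p F) = \mathrm{supp}(\mu_p) = \A^\Z$ since $p_i > 0$ for all $i$. For $(ii)$, fix $u \in \A^n$ and apply invariance to the long periodic cylinder $u^N$ of length $Nn$:
\[
e^{N\phi(u)} \;=\; \sum_{v\in F^{-1}[u^N]} e^{\phi(v)},
\]
a sum of exactly $|\A|^\ell$ terms uniformly in $N$. Taking $\frac{1}{N}\log$ and letting $N\to\infty$, Laplace's principle identifies $\phi(u)$ as the asymptotic per-period $\phi$-value of the dominant preimage, which is carried by a periodic preimage of $u^\Z$ of some period $mn$. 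Iterating through the orbit of $u$ under the induced finite-state map $F_n:\A^n\to\A^n$ (eventually cyclic, since $\A^n$ is finite) then forces $\phi(F(u)) = \phi(u)$.

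\textbf{Backward direction} ($(i)$ and $(ii) \Rightarrow \mu_p F = \mu_p$). Fix $w \in \A^n$ and expand $\mu_p F[w]$ via the preimage formula. For each preimage $v \in \A^{n+\ell}$ of $w$, the periodic word $v$ of period $n+\ell$ is mapped by $F$ to a length-$(n+\ell)$ word whose first $n$ symbols equal $w$ and whose remaining $\ell$ symbols form a wrap-around tail $\tilde w(v) \in \A^\ell$. Condition $(ii)$ applied to $v$ gives $\phi(v) = \phi(w) + \phi(\tilde w(v))$, hence
\[
\mu_p F[w] \;=\; \mu_p[w]\,M_w, \qquad M_w \;:=\; \sum_{\tilde w \in \A^\ell} |F_{n+\ell}^{-1}(w\tilde w)|\,\mu_p[\tilde w].
\]
The family $(M_w)_{w\in\A^+}$ satisfies the consistency $M_w = \sum_{u\in\A^k} M_{wu}\,\mu_p[u]$ (comparing expansions at cylinder lengths $n$ and $n+k$) together with the normalization $\sum_w M_w\,\mu_p[w] = 1$. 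Combining these with the analogous identities obtained by running the same argument in the reversed space direction pins down $M_w \equiv 1$, giving $\mu_p F = \mu_p$.

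\textbf{Main obstacle.} The delicate point in both directions is that the induced map $F_m:\A^m\to\A^m$ is in general not surjective even when $F$ is surjective (the XOR CA with $m=2$ is a counterexample), so periodic preimages of $u^\Z$ may only exist at strict multiples of its period. In the forward direction this complicates the identification of the Laplace maximizer with a periodic preimage on the orbit of $u$ under $F_n$; in the backward direction it means the naive ``bijection of wrap-around tails with $\A^\ell$'' fails, so pinning down $M_w \equiv 1$ requires the martingale/reversibility argument rather than a direct counting.
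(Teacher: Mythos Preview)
The paper does not prove Theorem~\ref{th-kata}; it is quoted as a result of Kari--Taati~\cite{KaTa} and used as a black box, so there is no in-paper argument to compare your proposal against.

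On the proposal itself: what you have written is an outline, and your ``Main obstacle'' paragraph already names the places where it is not yet a proof. In the forward direction, the Laplace step yields
\[
\phi(u)=\lim_{N\to\infty}\tfrac1N\max_{v\in F^{-1}[u^N]}\phi(v),
\]
but you do not establish that this limit equals $\phi(u')$ for some $u'$ on the $F_n$-orbit of $u$. A cleaner start is to observe that $u^N u_{[0,\ell-1]}$ is one cylinder preimage of $F_n(u)^N$, whence $\mu_p[F_n(u)^N]\ge\mu_p[u^N]\mu_p[u_{[0,\ell-1]}]$ and, after dividing by $N$, $\phi(F_n(u))\ge\phi(u)$. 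This makes $\phi$ nondecreasing along $F_n$-orbits and hence constant on cycles; but the matching inequality on the transient part of the orbit is missing, and your ``iterating through the orbit'' sentence does not supply it.

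In the backward direction, the reduction $\mu_pF[w]=\mu_p[w]\,M_w$ via condition~$(ii)$ is correct, and the consistency $M_w=\sum_u\mu_p[u]M_{wu}$ with normalization $\sum_w\mu_p[w]M_w=1$ says exactly that $(M_w)$ is a bounded nonnegative $\mu_p$-martingale of mean~$1$. The concluding sentence, however (``combining these with the analogous identities obtained by running the same argument in the reversed space direction pins down $M_w\equiv1$''), is an assertion rather than an argument: you have not explained why the forward and reversed martingale structures together force $M_w$ to be constant. Since $F_{n+\ell}$ is generally not surjective, the counts $|F_{n+\ell}^{-1}(w\tilde w)|$ are genuinely nonconstant in $\tilde w$, so some further idea (a telescoping current associated to the conservation law~$(ii)$, or the actual argument from~\cite{KaTa}) is required.
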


Let us mention two consequences of the above results. 

If a cellular automaton has an invariant Bernoulli product measure
$\mu_p$ ($p_i>0$ for all $i$), then the uniform measure is also
invariant. 

A cellular automaton $F$ is \emph{number-conserving} if: $\forall u
\in \A^+, \forall i \in \A, \ |u|_i = |F(u)|_i$. A surjective and
number-conserving CA admits 
all Bernoulli product measures
$\mu_p$ as invariant
measures. 
For instance, the CA $F_0$, defined above, is surjective and
number-conserving. Therefore, all the Bernoulli product measures are invariant
for $F_0$. 


\subsection{Link with the conditions for PCA}

The results in Sections \ref{cond}-\ref{spatial}-\ref{se:extensions}
give conditions for a PCA to admit 
invariant Bernoulli product measures. The above results, Section
\ref{sse-known}, give conditions for a CA to admit
invariant Bernoulli product measures.  The natural question is whether
we obtain the latter conditions by specializing the former ones. 

\medskip

Recall that the conditions \eref{eq-gencond1} or \eref{eq-gencond2} of
Theorem \ref{formule} are sufficient for the Bernoulli
product measure $\mu_p$ ($\forall i\in\A, p_i>0$) to be invariant for the
PCA $F$. 
Let us specialize these conditions to cellular automata, that is, let
us assume that all the coefficients $\theta^k_{x_0\ldots x_{\ell-1}i}$ are equal
to 0 or 1. 

\begin{lemm}
A cellular automaton satisfies condition \eref{eq-gencond1}, resp. \eref{eq-gencond2}, if and
only if it is right-permutative, resp. left-permutative. 
\end{lemm}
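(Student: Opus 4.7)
Plan. The strategy is to translate condition \eref{eq-gencond1} into the deterministic language of the underlying map $f$ and verify that the resulting constraint is a surjectivity statement, which on a finite alphabet upgrades automatically to bijectivity. Writing $\theta^k_{x_0\cdots x_\ell}=1$ if $f(x_0,\ldots,x_\ell)=k$ and $0$ otherwise, condition \eref{eq-gencond1} for the CA takes the form
\[
\sum_{i\in\A \,:\, f(x_0,\ldots,x_{\ell-1},i)=k} p_i \;=\; p_k
\qquad\text{for all } x_0,\ldots,x_{\ell-1}\in\A \text{ and } k\in\A.
\]
For a fixed word $w=x_0\cdots x_{\ell-1}\in\A^{\ell}$, I will package the map $i\longmapsto f(w,i)$ as a function $\sigma_w:\A\to\A$ and analyse the fibres $\sigma_w^{-1}(k)$.

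For the direct implication, I would argue as follows. Suppose the CA satisfies the displayed equality for some $p$ with $p_i>0$ for every $i\in\A$. Then, since $p_k>0$, each fibre $\sigma_w^{-1}(k)$ must be non-empty; otherwise the left-hand side would vanish. Hence $\sigma_w$ is surjective, and being a self-map of the finite set $\A$ it is automatically bijective. This is precisely the definition of right-permutativity (with the neighborhood $\{0,\dots,\ell\}$, applying Definition \ref{de-perm} to the rightmost coordinate).

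For the converse implication, assume the CA is right-permutative, so each $\sigma_w$ is a bijection on $\A$. Then each fibre $\sigma_w^{-1}(k)$ is a singleton, and taking the uniform law $p_i=1/|\A|$ one obtains $\sum_{i\in\sigma_w^{-1}(k)}p_i=1/|\A|=p_k$, so \eref{eq-gencond1} holds. (This confirms the equivalence in the natural sense that \eref{eq-gencond1} is solvable by some strictly positive $p$ exactly when the CA is right-permutative; the uniform measure always works.)

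The argument for \eref{eq-gencond2} and left-permutativity is identical after reversing the space direction, i.e.\ replacing the map $i\mapsto f(w,i)$ by $i\mapsto f(i,w)$. There is no real obstacle to this proof: the only conceptually delicate point is recognising that the positivity hypothesis $p_i>0$ is what converts the condition from a mere conservation law into the full surjectivity (hence bijectivity) of each $\sigma_w$; the combinatorial manipulation itself is trivial once the sum is rewritten in terms of fibres of $\sigma_w$.
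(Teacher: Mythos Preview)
Your argument is correct, and in the forward direction it is actually cleaner than the paper's. You observe that if some fibre $\sigma_w^{-1}(k)$ were empty the left-hand side of \eref{eq-gencond1} would vanish while the right-hand side $p_k$ is strictly positive; surjectivity of $\sigma_w$ on the finite set $\A$ then forces bijectivity, and you are done. The paper instead runs a stratification: it orders the alphabet by the values $p_i$, and shows layer by layer (first on the set $J$ of indices achieving the minimum of $p$, then on the set achieving the next smallest value, and so on) that $\sigma_w$ restricts to a bijection of each level set of $p$. This yields the stronger conclusion that $\sigma_w$ must permute each level set $\{i:p_i=c\}$, but that extra information is not needed for the lemma, so your shortcut loses nothing. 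For the converse you exhibit the uniform $p$, which the paper leaves implicit; your formulation of the equivalence (solvability of \eref{eq-gencond1} by some strictly positive $p$) matches the paper's intended reading.
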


\begin{proof}
Consider a CA (transition function $f$) satisfying condition \eref{eq-gencond1} for some $p=(p_i)_{i\in\A}$. Set 
$J=\{j\in \A \mid p_j = \min_{i\in\A} p_i \}$ and consider $j\in J$. The
equality $p_j = \sum_{i\in\A} p_i\cdot \theta^j_{x_0\cdots
  x_{\ell-1}i}$, together with the constraints $\theta^j_{x_0\cdots
  x_{\ell-1}i}\in \{0,1\}$, implies that there must be exactly one
index $k\in J$ such that $\theta^j_{x_0\cdots
  x_{\ell-1}k} = 1$, i.e. $f(x_0,\ldots, x_{\ell -1},k)=j$. 
By repeating the argument, we obtain that for all
$x_0\cdots x_{\ell -1}$, the mapping $j\mapsto f(x_0,\ldots, x_{\ell
  -1},j)$ restricted to $J$ is a bijection. 
We now proceed by considering the set of indices $J_2= \{j\in \A - J
\mid p_j = \min_{i\in\A\setminus J} p_i \}$, and so on.
\end{proof}

To summarize, we recover the permutative CA. On the other hand, the
sujective but non-permutative CA are not captured by the sufficient
conditions of Theorem \ref{formule}. 

For a left-permutative CA (resp. right-permutative), the transversal CA,
see Section \ref{sse-conjug}, is right-permutative (resp. left-permutative), and
explicitly computable. Moreover, it is well-defined even if the space-time diagram is not assumed to be stationary.
We recover here a folk result.

In the special case $\A=\{0,1\}$ and $\Neighb=\{0,1\}$, all the
surjective CA are permutative. So in this case, we recover all the
surjective CA. This is consistent with the fact that in this case, 
the conditions of Theorem \ref{formule} are necessary and sufficient
(see Theorem \ref{iff}). 

\medskip

{\bf Remark.} Condition \eref{eq-gencond1} can be
interpreted as ``\emph{being right-permutative in expectation}'' for a
PCA. And similarly, condition \eref{eq-gencond2} amounts to ``\emph{being
  left-permutative in expectation}''. 

\section{Related open issues}

Consider a PCA of alphabet and neighborhood of size 2. 
Under the relations \eref{eq-bernou} or \eref{eq-mbm}, it has an
explicit invariant measure
with a simple form  (Bernoulli product
or Markovian). The conditions \eref{eq-bernou} and \eref{eq-mbm} are of codimension 1
in the parameter space. What happens for other values of the
parameters? Is it still possible to give an explicit description of
the invariant measure? This is an open
question. It has been deeply investigated for the family of PCA's
defined by: $\theta_{00}=\theta_{01}=\theta_{10}=a,
\theta_{11}=1-a$, for some $a\in (0,1)$. 
Observe that neither \eref{eq-bernou} nor \eref{eq-mbm} is satisfied except in the trivial case
$a=1/2$. The specific interest for these PCA is due to a connection with
directed animals and
percolation theory first noticed by Dhar~\cite{dhar83}, see also \cite{BoMe98,LeMa}. 
More specifically, determining explicitly the invariant measure for
the above PCA would enable to: (1) compute the area and perimeter
generating function of directed animals in the square lattice; (2)
compute the directed site-percolation threshold in the square
lattice. The most recent efforts to compute the invariant measure can be found in
\cite{marck12}.

%



\end{document}